   \edef\Gin@extensions{\Gin@extensions,.mps}
\newcommand*{\medcap}{\mathbin{\scalebox{1.5}{\ensuremath{\cap}}}}
\newtheorem{theorem}{Theorem}
\newtheorem{lemma}{Lemma}
\newtheorem{corollary}{Corollary}
\newtheorem{proposition}{Proposition}
\theoremstyle{definition}
\newtheorem{definition}{Definition}
\newtheorem{example}{Example}
\newtheorem{notation}{Notation}
\theoremstyle{remark}
\newtheorem{remark}{Remark}
\definecolor{DarkBlue}{rgb}{0,0.1,0.55}
\numberwithin{equation}{section}
\newcommand {\hide}[1]{}
 \newcommand {\sign} {\mbox{\bf sign}}
\newcommand {\junk}[1]{}
\newcommand {\R} {\mathrm{R}}
\newcommand {\D}     {\mbox{\rm D}}
\newcommand {\C}     {\mathrm{C}}
\newcommand {\Z}  {\mathbb{Z}}
\newcommand {\Q}         {\mathbb{Q}}
\newcommand {\kk}         {\mathbf{k}}
\newcommand{\dd} {\mathbf{d}}
\newcommand{\Qu} {\mathrm{Q}}
\newcommand{\F}{\mathbb{F}}
\newcommand {\ZZ} {{\rm Z}}
\newcommand {\RR} {{\mathcal R}}
\newcommand {\la}   {{\langle}}
\newcommand {\ra}   {{\rangle}}
\newcommand {\eps} {{\varepsilon}}
\newcommand{\card}{\mathrm{card}}
\def\addots{\mathinner{\mkern1mu
\raise1pt\vbox{\kern7pt\hbox{.}}
\mkern2mu\raise4pt\hbox{.}\mkern2mu
\raise7pt\hbox{.}\mkern1mu}}
\newcommand{\HH}  {\mbox{\rm H}}
\newcommand{\x}{\mathbf{x}}
\newcommand{\X}{\mathbf{X}}
\newcommand{\y}{\mathbf{y}}
\newcommand{\Y}{\mathbf{Y}}
\newcommand{\ZB}{\mathbf{Z}}
\newcommand{\Ext}{\mathrm{Ext}}
\newcommand{\length}{\mathrm{length}}
\newcommand{\Comp}{\mathrm{Comp}}
\newcommand{\CompMax}{\mathrm{CompMax}}
\newcommand{\Weyl}{\mathcal{W}}
\newcommand{\w}{\mathbf{w}}
\newcommand{\boldPsi}{\boldsymbol{\Psi}}
\newcommand{\boldpi}{\boldsymbol{\pi}}
\newcommand{\boldsigma}{\boldsymbol{\sigma}}
\newcommand{\boldSigma}{\boldsymbol{\Sigma}}
\newcommand{\bWeyl}{\boldsymbol{\mathcal{W}}}
\newcommand{\bComp}{\mathbf{Comp}}
\newcommand{\boldlambda}{\boldsymbol{\lambda}}
\newcommand{\boldLambda}{\boldsymbol{\Lambda}}
\newcommand{\boldmu}{\boldsymbol{\mu}}
\newcommand{\boldnu}{\boldsymbol{\nu}}
\begin{document}
\title[On the equivariant Betti numbers of  symmetric 
definable
sets]
{
On the equivariant Betti numbers of  symmetric 
definable
sets: vanishing, bounds and algorithms
}
\author{Saugata Basu}
\address{Department of Mathematics,
Purdue University, West Lafayette, IN 47906, U.S.A.}
\email{sbasu@math.purdue.edu}

\author{Cordian Riener}
\address{Department of Mathematics and Statistics, 
UiT The Arctic University of Norway,  
9037 Troms\o{}, Norway}
\email{cordian.riener@uit.no}

\subjclass{Primary 14F25; Secondary 68W30}
\date{\textbf{\today}}
\keywords{equivariant cohomology, symmetric semi-algebraic sets, Betti numbers, computational complexity}
\thanks{
Basu was also  partially supported by NSF grants
CCF-1618918 and DMS-1620271. 
 }

\begin{abstract}
Let $\R$ be a real closed field.
We prove that for any fixed $d$, the equivariant rational cohomology groups of closed symmetric semi-algebraic subsets of $\R^k$ defined
by polynomials of degrees bounded by $d$ vanishes in dimensions $d$ and larger.  
This vanishing result is tight.
Using a new geometric approach we also prove an upper bound of $d^{O(d)} s^d k^{\lfloor d/2 \rfloor-1} $ on the equivariant Betti numbers of closed symmetric semi-algebraic subsets of $\R^k$ defined by quantifier-free formulas  involving $s$ symmetric polynomials of degrees bounded by $d$, where $1 < d \ll s,k$.
This bound is tight up to a factor depending only on $d$.
These results significantly improve upon  those obtained previously in \cite{BC2013} which were proved using different techniques.
Our new methods are quite general, and also yield bounds on the equivariant Betti numbers of certain special classes of symmetric definable sets (definable sets symmetrized by pulling back under symmetric polynomial maps of fixed degree)  in arbitrary  o-minimal structures over $\R$.

Finally, we utilize our new approach to obtain an algorithm with polynomially bounded complexity for computing these equivariant Betti numbers. In contrast, the problem of computing the ordinary Betti numbers of (not necessarily
symmetric) semi-algebraic sets is considered  to be an intractable problem, and all known algorithms for this problem have doubly exponential complexity.
\end{abstract}
\maketitle
\tableofcontents

\section{Introduction}
\label{sec:intro}
The problem of bounding the Betti numbers of  semi-algebraic sets defined over
the real numbers has a long history, and has attracted the attention of many
researchers -- starting from the first results due to 
Ole{\u\i}nik and Petrovski{\u\i}
{\cite{OP}}, followed by Thom {\cite{T}}, Milnor {\cite{Milnor2}}. 
If there is an action of a (compact) group on a real vector space whose action leaves the given semi-algebraic set invariant, it makes sense to separately study the topology modulo the group action. One classical notion to do this is by means of the so called \emph{equivariant} Betti numbers (see \S  \ref{def:equivariant-cohomology}).  The resulting  question of studying the equivariant Betti numbers of $\emph{symmetric}$ semi-algebraic subsets of $\R^k$ is relatively more recent and was initiated in \cite{BC2013}, where polynomial bounds for semi-algebraic sets defined by symmetric polynomials were given. 

Before proceeding any further it will be useful to keep in mind the 
following simple example (both as a guiding principle for proving upper bounds on and as a lower bound for the equivariant Betti numbers).

\begin{example}
\label{eg:key}
Let $1 < d \ll k$, $d$ even. We will think of $d$ as a fixed constant and let $k$ be large.
Also, let
\[
P = \sum_{i=1}^{k} \prod_{j=1}^{d/2} (X_i - j)^2 \in \R[X_1,\ldots,X_k].
\]
Then, the set of real zeros, $V_{d,k}$ of $P$ in $\R^k$ is finite and consists of the $(d/2)^k$ isolated points -- namely the set $\{1,\ldots,d/2\}^k$. In other words the zero-th Betti number of $V_{d,k}$ equals 
\[
   (d/2)^k = (O(d))^k, 
\]
which grows exponentially  in $k$ (for fixed $d$). However, $P$ is a symmetric polynomial, and as a result there is an action of the symmetric group $\mathfrak{S}_k$ on $V_{d,k}$.
The number of orbits of this action equals the zero-th Betti number of the quotient $V_{d,k}/\mathfrak{S}_k$. It is not too difficult to see that the orbit of a point $\x=(x_1,\ldots,x_k) \in V_{d,k}$ is determined by the tuple $\lambda(\x) = (\lambda_1,\ldots,\lambda_{d/2})$, where
$\lambda_i = \card(\{j \mid x_j = i\})$. Thus, the number of orbits of $V_{d,k}$, and thus the sum of the Betti numbers of the quotient $V_{d,k}/\mathfrak{S}_k$ equals
$\binom{k + d/2 -1}{d/2-1}$, which satisfies the inequalities
\begin{equation*}
\label{eqn:eg:key:1}
c_d \cdot k^{d/2-1} \leq \binom{k + d/2 -1}{d/2-1}  \leq C_d \cdot k^{d/2-1},
\end{equation*}
where $c_d, C_d$ are constants that depend only on $d$. Notice that unlike the Betti numbers of $V_{d,k}$ itself,
the Betti numbers of the quotient are bounded by a \emph{polynomial} in $k$ 
(for fixed $d$), and moreover the
degree of this polynomial is $d/2-1$. One of the main new results of the current paper
(see inequality \eqref{eqn:thm:bound:1} in Theorem \ref{thm:bound})
is an upper bound on the sum of the equivariant Betti numbers of symmetric real varieties 
that matches (up to a factor depending only on $d$) the lower bound
implied by Example \ref{eg:key}. 
\end{example}

In the present article we improve the existing quantitative results on the vanishing of the higher equivariant cohomology groups of symmetric semi-algebraic sets 
(Theorem \ref{thm:vanishing})
as well as bounding of the equivariant Betti numbers of symmetric semi-algebraic sets 
(Theorems \ref{thm:bound} and \ref{thm:bound-general}).
Our techniques are completely different than those used in \cite{BC2013} where the previous best known bounds for these quantities were proved. 
Moreover, the new methods also yield bounds on the equivariant Betti numbers of certain special classes of symmetric definable sets (definable sets symmetrized by pulling back under symmetric polynomial maps of fixed degree)  in arbitrary  o-minimal structures over $\R$ (Theorems \ref{thm:definable1} and \ref{thm:definable2}).

While obtaining tight upper bounds on the Betti numbers of real varieties and semi-algebraic sets is an extremely well-studied problem \cite{BPR10}, there is also a related \emph{algorithmic} question that is of great importance -- namely, designing efficient algorithms for computing them. 
One reason for the importance of this algorithmic question is that the existence or non-existence 
of such algorithms with \emph{polynomially bounded complexity} for real varieties defined by polynomials of degrees bounded by some constant is closely related to the $\mathbf{P}_{\mathbb{R}}$ versus 
$\mathbf{NP}_{\mathbb{R}}$ and similar questions in the Blum-Shub-Smale theory of computation and its generalizations (see for example \cite{BSS89}).

The  new method used in the proof for the tighter bounds allows us to give an algorithm with polynomially bounded complexity for computing the equivariant Betti numbers of semi-algebraic sets defined by symmetric polynomials of degrees bounded by some constant (Theorem \ref{thm:algorithm}). 
This is particularly striking because the problem of computing the \emph{ordinary} Betti numbers in the non-symmetric case is a 
$\mathbf{PSPACE}$-hard problem, and is thus considered intractable.
In particular, this result also confirms a meta-theorem that suggests that for computing polynomially bounded topological invariants of semi-algebraic sets algorithms with polynomially bounded complexity should exist. 

\subsection{Notations and background}
All our results will be stated not only for the real numbers but more generally for arbitrary real closed fields. Note however, that by the 
\emph{Tarski-Seidenberg transfer theorem} (the reader may consult \cite[Chapter 2]{BPRbook2} for a detailed exposition of this statement)  most statements valid over one such field hold in  any other real closed field. Therefore, we can fix a real closed field $\R$, and we denote by
$\C$  the algebraic closure of $\R$.   We also introduce the following notation.

\begin{notation}
Given $k,d \in \Z_{\geq 0}$, we denote by $\R[\X]_{\leq d}= \R[X_1,\ldots,X_k]_{\leq d}$ the $\R$-vector space of polynomials of
degree at most $d$. More generally, given $\kk = (k_1,\ldots,k_\omega),\dd = (d_1,\ldots,d_\omega) \in \Z_{\geq 0}^\omega$, we will denote
\[
\R[\X^{(1)},\ldots,\X^{(k_\omega)}]_{\leq \dd} = \R[\X^{(1)}]_{\leq d_1} \otimes \cdots \otimes \R[\X^{(\omega)}]_{\leq d_\omega},
\]
where for each $i, 1 \leq i \leq \omega$,
\[
\R[\X^{(i)}] = \R[X_1^{(i)},\ldots,X_{k_i}^{(i)}].
\]
For $\kk =(k_1,\ldots,k_\omega) \in \Z_{\geq 0}^\omega$, we will also denote by $|\kk| = \sum_{i=1}^\omega k_i$.
\end{notation}

\begin{notation}
  For a given polynomial  $P \in \R [X_{1} , \ldots ,X_{k} ]$ we denote the set of zeros of $P$  in  $\R^{k}$ by $\ZZ (P, \R^{k})$. More generally, for any finite set
  $\mathcal{P} \subset \R [ X_{1} , \ldots ,X_{k} ]$, the set of common zeros of $\mathcal{P}$ in
  $\R^{k}$ is denoted by $\ZZ
  (\mathcal{P}, \R^{k})$.
\end{notation}

\begin{definition}
  \label{def:sign-condition}
  Let  $\mathcal{P}\subset \R [ X_{1} , \ldots ,X_{k} ]$ be a finite family of polynomials. 
  An element  $\sigma \in \{0,1,-1 \}^{\mathcal{P}}$ is called  a \emph{sign condition} on $\mathcal{P}$.
  Given any semi-algebraic set $Z \subset \R^{k}$, and  a sign condition $\sigma \in
  \{ 0,1,-1 \}^{\mathcal{P}}$, the \emph{realization} of $\sigma$ on $Z$ is the 
   semi-algebraic set defined by 
  \[
  \left\{ \x \in Z \mid \sign (P (\x)) = \sigma (P)  ,P \in \mathcal{P} \right\}.
  \]
  More generally,  let  $\Phi$ be a Boolean formula such that the  atoms of $\Phi$ are of the from, 
  $P \;\sim\; 0, P \in \mathcal{P}$, where the relation $\sim$ is one of $=,>,$ or $<$. Then we  will call such a formula a
   \emph{$\mathcal{P}$-formula}. and  the \emph{realization} of $\Phi$, i.e.,  the semi-algebraic set
  \begin{eqnarray*}
    \RR (\Phi , \R^{k}) & = & \{ \x \in \R^{k} \mid
    \Phi (\x) \},
  \end{eqnarray*}  will be called  a \emph{$\mathcal{P}$-semi-algebraic set}. 
  Finally, a  Boolean formula without negations, and with atoms 
  $P \;\sim\; 0, P\in \mathcal{P}$ where $\sim$ is either $\leq$ or $\geq$, 
  will be called a \emph{$\mathcal{P}$-closed formula}, and we call
  its realization, 
  $\RR(\Phi , \R^{k})$, a \emph{$\mathcal{P}$-closed semi-algebraic set}.
\end{definition}

\begin{notation}
  Let $X\subset\R^{k}$ be any  semi-algebraic set and let $\F$ be a fixed field.
Then, we will consider the  \emph{$i$-th cohomology group} of $X$ with coefficients in
 $\F$, which is denoted  by $\HH^{i} (X,\F)$. We will study the dimensions of these $\F$ vector spaces, which are denoted by $b^{i} (X,\F) =  \dim_{\F}  \HH^{i} (X,\F)$, and their sum denoted by $b(X,\F) = \sum_{i \geq 0}b^i(X,\F)$. 
 It is worth noting  that the precise  definition of  these notions  requires some care if the semi-algebraic set is defined over an arbitrary (possibly non-archimedean) real closed field. For details we refer to \cite[Chapter 6]{BPRbook2}. \end{notation}
 
The following classical result, which is due to Ole{\u\i}nik and Petrovski{\u\i} \cite{OP},
Thom \cite{T},  and Milnor \cite{Milnor2} gives a sharp upper bound on the Betti
numbers of a real variety in terms of the degree of the defining polynomial
and the number of variables.

\begin{theorem}
  \label{thm:betti-bound-algebraic}{\cite{OP,T,Milnor2}} 
  Let $k,d \in \Z_{\geq 0}$, and 
  $Q  \in \R [ X_{1} , \ldots ,X_{k} ]_{\leq d}$.
  Then, for any field of coefficients $\F,$
  \begin{eqnarray*}
    b(\ZZ (Q, \R^{k}) ,\F) & \leq & d (2d-1)^{k-1} .
  \end{eqnarray*}
\end{theorem}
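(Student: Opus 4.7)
The plan is to carry out Milnor's classical argument: replace $\ZZ(Q,\R^k)$ by a smooth, compact real hypersurface of degree at most $2d$ whose sum of Betti numbers dominates $b(\ZZ(Q,\R^k),\F)$ up to a factor of two, and bound that sum by Morse theory together with a Bezout count on $\C^k$.

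For the first step, I would pass to the real closed extension $\R\la\eps_1,\eps_2\ra$ by two infinitesimals with $0 < \eps_2 \ll \eps_1 \ll 1$, and set
\[
Q_\ast \;:=\; Q^2 \;+\; \eps_1\bigl(X_1^{2d}+\cdots+X_k^{2d}\bigr) \;-\; \eps_2,
\]
which is a polynomial of degree $2d$. For generic choices the hypersurface $W := \ZZ(Q_\ast,\R\la\eps_1,\eps_2\ra^k)$ is smooth (the system $Q_\ast=0,\ \nabla Q_\ast=0$ has no solution, by the transversality of the two perturbation terms) and bounded, since $\eps_1 \sum X_i^{2d}$ forces $\|X\|$ to stay bounded on $W$, so that $W$ is semi-algebraically compact. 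A standard semi-algebraic deformation argument (as in \cite[Chapter 7]{BPRbook2}) then shows that $W$ deformation retracts onto the disjoint union of two copies of a small closed tubular neighborhood of (the $\R\la\eps_1,\eps_2\ra$-extension of) $\ZZ(Q,\R^k)$, so combined with the Tarski--Seidenberg transfer for Betti numbers one obtains
\[
b(\ZZ(Q,\R^k),\F) \;\leq\; \tfrac{1}{2}\, b(W,\F).
\]

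Next I would apply Morse theory to $W$ using a generic linear function $\ell(X) = a_1 X_1 + \cdots + a_k X_k$. Since $W$ is smooth and compact, the Morse inequalities give $b(W,\F) \leq \#\{\text{critical points of }\ell|_W\}$, and the critical points are the common zeros of the polynomial system
\[
Q_\ast = 0, \qquad a_1\,\tfrac{\partial Q_\ast}{\partial X_i} \;-\; a_i\,\tfrac{\partial Q_\ast}{\partial X_1} \;=\; 0, \quad i = 2,\ldots,k,
\]
which consists of one equation of degree $2d$ and $k-1$ equations of degree $2d-1$. For generic choices of $(a_1,\ldots,a_k)$ the set of complex zeros of this system is finite, so Bezout's theorem over $\C^k$ bounds it, and hence also the number of real critical points, by $2d(2d-1)^{k-1}$.

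Combining the two estimates yields $b(\ZZ(Q,\R^k),\F) \leq d(2d-1)^{k-1}$, as required. The main technical hurdle is the first step: one has to choose the double-infinitesimal perturbation carefully so that $Q_\ast$ defines a smooth compact hypersurface of degree at most $2d$ that is genuinely a two-sheeted thickening of the extension of $\ZZ(Q,\R^k)$, and then invoke the transfer principle to return to the original field $\R$. Once this perturbation is in hand, the Morse-theoretic count and the Bezout bound are standard.
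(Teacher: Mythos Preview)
The paper does not prove this theorem at all: it is quoted as the classical Ole{\u\i}nik--Petrovski{\u\i}--Thom--Milnor bound, with a bare citation to \cite{OP,T,Milnor2} and no argument. So there is no ``paper's proof'' to compare against; your proposal is essentially the standard Milnor argument, which is the right thing to reproduce here.

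That said, two points in your write-up need repair.

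\emph{The infinitesimals are in the wrong order.} With $0<\eps_2\ll\eps_1$ and
$Q_\ast=Q^2+\eps_1\sum_i X_i^{2d}-\eps_2$, on $W=\{Q_\ast=0\}$ you get
$\eps_1\sum_i X_i^{2d}\le\eps_2$, hence $\sum_i X_i^{2d}\le\eps_2/\eps_1$, which is infinitesimal. So $W$ lies in an infinitesimal neighbourhood of the origin; in particular if $Q(0)\neq 0$ then $W=\emptyset$. You need the compactifying term to be \emph{much smaller} than the level, i.e.\ swap the roles (e.g.\ $Q_\ast=Q^2+\eps_2\sum_i X_i^{2d}-\eps_1$ with $\eps_2\ll\eps_1$), so that $W$ is an honest bounded thickening of the extension of $\ZZ(Q,\R^k)$.

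\emph{The factor $1/2$ does not come from a retraction onto ``two copies''.} When $\ZZ(Q,\R^k)$ is not a smooth hypersurface (say $Q=X_1^2+X_2^2$ in $\R^3$), the level set $\{Q^2=\eps\}$ is a sphere bundle over $\ZZ(Q,\R^k)$, not two disjoint sheets; there is no retraction of $W$ onto two copies of anything. The correct route to $b(\ZZ(Q,\R^k),\F)\le\tfrac12\,b(W,\F)$ is Milnor's duality step: the tube $T=\{Q_\ast\le 0\}$ retracts onto (the extension of) $\ZZ(Q,\R^k)$, and one combines the long exact sequence of the pair $(T,\partial T)$ with Poincar\'e--Lefschetz duality (``half lives, half dies'') to get $2\,b(T,\F)\le b(\partial T,\F)+O(1)$. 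Once this is in place, your Morse/B\'ezout count on $W$ of degree $2d$ gives $b(W,\F)\le 2d(2d-1)^{k-1}$ and the theorem follows.
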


More generally for $\mathcal{P}$-closed semi-algebraic sets we have the following bound. 

\begin{theorem}\cite{BPRbook2,GV07}
  \label{thm:betti-bound-sa-general} 
  Let $k,d \in \Z_{\geq 0}$,
  $\mathcal{P} \subset \R [ X_{1} , \ldots ,X_{k} ]_{\leq d}$ be a finite set of
  polynomials, and $S$ be a
  $\mathcal{P}$-closed semi-algebraic set. Then, for any field of coefficients
  $\F,$
  \begin{eqnarray*}
    b(S,\F) & \leq & \sum_{i=0}^{k}
    \sum_{j=1}^{k-i} \binom{\card(\mathcal{P})+1}{j} 6^{j} d (2d-1)^{k-1} .
  \end{eqnarray*}
 \end{theorem}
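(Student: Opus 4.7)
The plan is to combine a deformation of $S$ by infinitesimals with the Mayer--Vietoris spectral sequence, reducing the estimate to bounding Betti numbers of finite intersections of real algebraic hypersurfaces, to which Theorem \ref{thm:betti-bound-algebraic} then applies.

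Concretely, with $\mathcal{P} = \{P_1,\ldots,P_s\}$, I would pass to the real closed extension $\R\la\eps_1,\ldots,\eps_s\ra$ with $0 < \eps_s \ll \cdots \ll \eps_1 \ll 1$, and replace each atom $P_i \geq 0$ (resp.\ $P_i \leq 0$) appearing in the defining $\mathcal{P}$-closed formula by a bounded number of deformed conditions of the form $\{\pm P_i + c\,\eps_i \geq 0\}$. A careful combinatorial rewriting then produces a bounded closed set $S'$ satisfying $b^i(S,\F) \leq b^i(S',\F)$ for every $i$, expressible as a union $S' = \bigcup_{\alpha \in A} B_\alpha$ of basic closed semi-algebraic sets (each an intersection of deformed pieces, further intersected with a large ball), indexed by a set $A$ of cardinality at most $6(\card(\mathcal{P}) + 1)$.

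Next, the Mayer--Vietoris inequality applied to the closed cover $\{B_\alpha\}_{\alpha \in A}$ bounds $b^i(S',\F)$ by a sum, over non-empty subsets $J \subset A$, of Betti numbers of the intersections $B_J = \bigcap_{\alpha \in J} B_\alpha$. Each such $B_J$ is (after the deformation) generically the transverse intersection of $|J| = j$ hypersurfaces of degree $\leq d$, hence has real dimension at most $k - j$; this dimension bound both restricts the relevant values of $j$ to the range $1 \leq j \leq k - i$ and, by replacing the intersection by the real zero set of a single polynomial $\sum Q_\alpha^2$ of degree $\leq 2d$, yields $b(B_J, \F) \leq d(2d-1)^{k-1}$ via Theorem \ref{thm:betti-bound-algebraic}. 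Since there are at most $\binom{\card(\mathcal{P}) + 1}{j} 6^j$ subsets $J$ with $|J| = j$, summing the contributions over $i$ and $j$ produces the stated bound.

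The main obstacle is the deformation step: one must rigorously justify that the infinitesimal perturbation produces a set $S'$ whose Betti numbers dominate those of $S$, and that the resulting cover has the stated combinatorial structure with constant $6$. This relies on the Tarski--Seidenberg transfer principle between $\R$ and $\R\la\eps\ra$, together with a Hardt-trivialization (or local semi-algebraic conic structure) argument showing that as $\eps_i \to 0^+$ the deformation retracts onto $S$; the precise constant $6$ reflects the bookkeeping of the at-most-six closed pieces that each sign condition contributes after deformation. Everything else is a clean combination of dimensional vanishing of cohomology and the algebraic Oleĭnik--Petrovskiĭ--Thom--Milnor bound.
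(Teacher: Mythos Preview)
The paper does not give its own proof of Theorem~\ref{thm:betti-bound-sa-general}; it is quoted as a known result with citations to \cite{BPRbook2} and \cite{GV07}. Your outline---infinitesimal deformation to a closed bounded set, a finite closed cover, Mayer--Vietoris, and then the Ole{\u\i}nik--Petrovski{\u\i}--Thom--Milnor bound on the pieces---is exactly the strategy of those references. The paper in fact redevelops a variant of this same machinery in \S\ref{subsec:ordinary-Betti} (Propositions~\ref{prop:closed-with-parameters} through~\ref{prop:P-closed-main}) for its own later purposes, so your sketch is consistent both with the cited sources and with the spirit of the surrounding text.

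One small combinatorial slip worth fixing: if your index set $A$ has $6(\card(\mathcal{P})+1)$ elements, then the number of $j$-element subsets is $\binom{6(\card(\mathcal{P})+1)}{j}$, not $\binom{\card(\mathcal{P})+1}{j}6^{j}$. To recover the count appearing in the statement you need the further observation that the (up to six) deformed pieces attached to a single $P_i$ are pairwise disjoint, so any nonempty intersection $B_J$ meets at most one piece per polynomial; a contributing $J$ is therefore determined by choosing $j$ of the $\card(\mathcal{P})+1$ polynomials (the ``$+1$'' accounting for the bounding ball) together with one of at most six pieces for each. With that refinement your argument lines up with the standard proof.
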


We will need the following immediate corollary of Theorem \ref{thm:betti-bound-sa-general}. Using the same notation as in 
Theorem \ref{thm:betti-bound-sa-general} we have:
\begin{corollary}
\label{cor:betti-bound-sa-general}
Suppose that $L \subset \R^k$ is a subspace with $\dim L = k'$. Then, for any field of coefficients
  $\F,$
  \begin{eqnarray*}
    b(L \cap S,\F) & \leq & \sum_{i=0}^{k'}
    \sum_{j=1}^{k'-i} \binom{\card(\mathcal{P})+1}{j} 6^{j} d (2d-1)^{k'-1} .
  \end{eqnarray*}
\end{corollary}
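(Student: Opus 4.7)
The plan is to reduce the problem directly to Theorem \ref{thm:betti-bound-sa-general} by working intrinsically on the subspace $L$.

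First, I would fix a linear isomorphism $\varphi : \R^{k'} \to L \subset \R^k$, so $L$ inherits the structure of an affine space of dimension $k'$ over $\R$. For each $P \in \mathcal{P}$, set $P^L := P \circ \varphi \in \R[Y_1,\ldots,Y_{k'}]$. Since $\varphi$ is linear, $\deg(P^L) \leq \deg(P) \leq d$, so $\mathcal{P}^L := \{P^L \mid P \in \mathcal{P}\}$ is a finite family in $\R[Y_1,\ldots,Y_{k'}]_{\leq d}$ with $\card(\mathcal{P}^L) \leq \card(\mathcal{P})$.

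Next, if $\Phi$ is a $\mathcal{P}$-closed formula whose realization in $\R^k$ equals $S$, then $\Phi^L$ (obtained by replacing each atom $P \sim 0$ by $P^L \sim 0$, with $\sim \in \{\leq,\geq\}$) is a $\mathcal{P}^L$-closed formula, and its realization in $\R^{k'}$ is $\varphi^{-1}(L \cap S)$. In particular, $\varphi^{-1}(L \cap S)$ is a $\mathcal{P}^L$-closed semi-algebraic subset of $\R^{k'}$, and $\varphi$ restricts to a semi-algebraic homeomorphism between it and $L \cap S$, so their Betti numbers agree for any field of coefficients $\F$.

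Finally, applying Theorem \ref{thm:betti-bound-sa-general} to $\varphi^{-1}(L \cap S) \subset \R^{k'}$ with the family $\mathcal{P}^L$ gives
\[
b(L \cap S,\F) \;=\; b(\varphi^{-1}(L \cap S),\F) \;\leq\; \sum_{i=0}^{k'} \sum_{j=1}^{k'-i} \binom{\card(\mathcal{P}^L)+1}{j} 6^{j} d (2d-1)^{k'-1},
\]
and since $\binom{\card(\mathcal{P}^L)+1}{j} \leq \binom{\card(\mathcal{P})+1}{j}$, the asserted bound follows. There is no real obstacle here; the only point requiring a word of care is the non-archimedean case, where one should invoke the semi-algebraic (rather than purely topological) notion of cohomology from \cite[Chapter 6]{BPRbook2} and the fact that a semi-algebraic linear isomorphism preserves it, which is standard.
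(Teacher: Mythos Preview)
Your proof is correct and follows essentially the same approach as the paper: pull back the polynomials along a linear identification of $L$ with $\R^{k'}$ (the paper phrases this as pulling back under the inclusion $\iota:L\hookrightarrow\R^k$) and apply Theorem~\ref{thm:betti-bound-sa-general} in dimension $k'$. Your write-up is simply more detailed, spelling out the formula-level translation, the inequality $\card(\mathcal{P}^L)\leq\card(\mathcal{P})$, and the non-archimedean caveat.
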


\begin{proof}
Note that a polynomial of degree bounded by $d$ in $\R^k$, pulls back to a polynomial on $L$ of degree at most $d$,
under the inclusion $\iota:L \hookrightarrow \R^k$. The corollary now follows immediately from Theorem
\ref{thm:betti-bound-sa-general}.
\end{proof}

In this paper we will consider bounding the \emph{equivariant} Betti numbers of symmetric semi-algebraic sets in terms of the 
\emph{multi-degrees}
of the defining polynomials. For this purpose it will be useful to have a more refined bound than the one in Theorem
\ref{thm:betti-bound-sa-general}. The following bound appears in \cite{BR2015}. Notice that in contrast to 
Theorems \ref{thm:betti-bound-sa-general} and
\ref{thm:betti-bound-algebraic} above which holds for coefficients in an arbitrary field $\F$, Theorem \ref{thm:multi-semi} only provides bounds
for the $\Z_2$-Betti numbers only. However, using the universal coefficients theorem, it is clear that a bound on the $\Z_2$-Betti is also a bound on the rational Betti numbers.

\begin{theorem}
\label{thm:multi-semi}
Let $\kk =(k_1,\ldots,k_\omega),\dd=(d_1,\ldots,d_\omega)  \in \Z_{\geq 0}^\omega$, 
$k = |\kk|$, $d_i \geq 2, 1 \leq i \leq \omega$, and 
$\mathcal{P} 
\subset \R[\X^{(1)},\ldots,\X^{(p)}]_{\leq \dd}$ 
a finite set of polynomials, where for $1 \leq i \leq \omega$, 
$\X^{(i)} = (X^{(i)}_1,\ldots,X^{(i)}_{k_i})$. 

If $S$ is a $\mathcal{P}$-closed semi-algebraic set, then
\begin{eqnarray*}
b(S,\Z_2) 
&\leq& O(1)^k \card(\mathcal{P})^k \omega^{3k} d_1^{k_1}\cdots d_\omega^{k_\omega}.
\end{eqnarray*}
\end{theorem}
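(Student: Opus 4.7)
My plan is to follow the general pattern of the proof of Theorem \ref{thm:betti-bound-sa-general} as laid out in \cite{GV07,BPRbook2}, but to carefully track the multi-degrees of the defining polynomials with respect to the variable blocks $\X^{(1)},\ldots,\X^{(\omega)}$ at every step. The scheme is: (i) reduce to the case of a closed and bounded realization, (ii) reduce bounding Betti numbers of $\mathcal{P}$-closed sets to bounding Betti numbers of real varieties defined by a single polynomial, and (iii) bound the Betti numbers of such real varieties via a multi-homogeneous Bezout estimate obtained from Morse theory.

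First, using infinitesimal deformations (e.g., intersecting with a large ball of radius $1/\eps$ in $\R\la\eps\ra^{k}$ and perturbing each $P \in \mathcal{P}$ by $\pm\eps$) I would replace $S$ by a closed and bounded set with the same $\Z_2$-Betti numbers, defined by at most $2 \card(\mathcal{P})$ polynomials whose multi-degrees still lie coordinatewise below $\dd$ (the perturbations only change the constant term). I would then invoke the Mayer-Vietoris style inequality of Gabrielov-Vorobjov (see \cite[Proposition 7.38]{BPRbook2}), which yields a bound of the form
\[
b(S,\Z_2) \leq \sum_{j=0}^{k} 6^{j}\binom{\card(\mathcal{P})+1}{j} \cdot \max_{\substack{\mathcal{P}''\subset\mathcal{P}\\ \card(\mathcal{P}'') = j}} b\!\left(\ZZ\!\left(\textstyle\sum_{P\in\mathcal{P}''}P^{2},\,\R^{k}\right),\Z_{2}\right),
\]
where the single polynomial $\sum_{P\in\mathcal{P}''}P^{2}$ has multi-degree bounded coordinatewise by $2\dd$.

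The core step is then a multi-degree analogue of Theorem \ref{thm:betti-bound-algebraic}: for any $Q$ of multi-degree bounded coordinatewise by $(d_1,\ldots,d_\omega)$ with each $d_i\geq 2$,
\[
b(\ZZ(Q,\R^{k}),\Z_{2}) \;\leq\; O(1)^{k}\binom{k}{k_{1},\ldots,k_{\omega}}\,d_{1}^{k_{1}}\cdots d_{\omega}^{k_{\omega}}.
\]
I would prove this by perturbing $Q$ to a smooth, closed, bounded real hypersurface $\{Q_{\eps}=0\}$ (adding a small multiple of $\sum_{i,j}(X^{(i)}_{j})^{2d_{i}}$ and a generic linear form), then applying Morse theory to a generic linear height function restricted to this hypersurface. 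The number of critical points is controlled by the count of common real zeros of the polynomial system $Q_{\eps}=0$, $\partial Q_{\eps}/\partial X^{(i)}_{j}=\lambda_{ij}$; this is a system where each equation has multi-degree bounded by $(d_1,\ldots,d_\omega)$ up to a shift, and the multi-homogeneous Bezout theorem bounds its complex solution count by the coefficient of $\prod h_i^{k_i}$ in $\prod_{\ell=1}^{k}(d_1 h_1 + \cdots + d_\omega h_\omega)$, which equals exactly $\binom{k}{k_1,\ldots,k_\omega}\,d_1^{k_1}\cdots d_\omega^{k_\omega}$.

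Combining the three steps, the $\sum 6^j\binom{\card(\mathcal{P})+1}{j}$ factor contributes $O(1)^{k}\card(\mathcal{P})^{k}$, the doubled multi-degrees contribute another $2^{k}$, and the multinomial coefficient is bounded by $\omega^{k}$; absorbing remaining factors of $\omega$ arising from the Mayer-Vietoris iteration and the perturbation count yields $O(1)^{k}\card(\mathcal{P})^{k}\omega^{3k}d_{1}^{k_{1}}\cdots d_{\omega}^{k_{\omega}}$ as claimed. The main obstacle is Step (iii): one must set up the smoothing and the Morse function so that the polynomial system whose solutions count the critical points really has per-block degree bounded by $d_i$ (rather than by $|\dd|$), because only then does multi-homogeneous Bezout give the sharper bound $\prod d_i^{k_i}$ rather than the coarser $(|\dd|)^{k}$ that would be obtained from standard Bezout.
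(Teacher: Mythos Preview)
The paper does not give its own proof of Theorem~\ref{thm:multi-semi}: it is quoted from \cite{BR2015} as a known input and used as a black box (only its immediate Corollary~\ref{cor:multi-semi} is derived, by restriction to a subspace). There is therefore nothing in the present paper to compare your argument against.

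That said, your outline is the right architecture and is essentially the strategy of \cite{BR2015}: reduce $\mathcal{P}$-closed sets to real algebraic hypersurfaces via the standard $\pm\delta,\pm 2\delta$ trick and Mayer--Vietoris, then bound the Betti numbers of a single real hypersurface by counting critical points of a generic projection and invoking a \emph{multi}-homogeneous B\'ezout bound in place of the ordinary one. One point in your Step~(iii) deserves more care. After you add the smoothing term $\eps\sum_{i,j}(X^{(i)}_j)^{2d_i}$, the polynomial $Q_\eps$ has degree $2d_i$ in block~$i$, and the partial $\partial Q_\eps/\partial X^{(i)}_j$ has degree $2d_i-1$ in block~$i$ but degree up to $d_j$ in each block $j\neq i$. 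The resulting critical-point system therefore does \emph{not} have uniform per-block degree $d_i$ in every equation, and the multi-homogeneous B\'ezout count is not simply $\binom{k}{k_1,\ldots,k_\omega}\prod_i d_i^{k_i}$; the mixed-degree permanent that arises is where the extra powers of $\omega$ genuinely come from. Your closing sentence attributing $\omega^{3k}$ to ``the Mayer--Vietoris iteration and the perturbation count'' is not correct --- those steps only produce $O(1)^k$ factors --- so you should redo the B\'ezout bookkeeping in Step~(iii) carefully rather than hand-wave the $\omega$-dependence.
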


We will need the following immediate corollary of Theorem \ref{thm:multi-semi}. Using the same notation as in 
Theorem \ref{thm:multi-semi} we have:
\begin{corollary}
\label{cor:multi-semi}
Suppose for $1 \leq i \leq \omega$, $L^{(i)}  \subset \R^{k_i}$ is a subspace with $\dim L^{(i)} = k_i'$, and 
$L = \oplus_{i=1}^{\omega} L^{(i)}$, and $k' = \sum_{i=1}^{\omega} k_i'$.  Then, 
\begin{eqnarray*}
b(S \cap L,\Z_2) 
&\leq& O(1)^{k'} \card(\mathcal{P})^{k'} \omega^{3k'} d_1^{k_1'}\cdots d_\omega^{k_\omega'}.
\end{eqnarray*}
 \end{corollary}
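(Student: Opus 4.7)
The plan is to mirror exactly the argument used for Corollary~\ref{cor:betti-bound-sa-general}, but keeping track of the block decomposition so that the multi-degree structure is preserved under restriction. So first I would fix, for each $i$ with $1\leq i\leq \omega$, a linear isomorphism $L^{(i)}\cong \R^{k_i'}$ realized by a linear map $\iota^{(i)}: \R^{k_i'}\hookrightarrow \R^{k_i}$ whose image is $L^{(i)}$; concatenating these gives a linear embedding
\[
\iota \;=\; \iota^{(1)}\oplus\cdots\oplus\iota^{(\omega)}\colon \R^{k_1'}\oplus\cdots\oplus\R^{k_\omega'}\;\hookrightarrow\;\R^{k_1}\oplus\cdots\oplus\R^{k_\omega}=\R^k
\]
whose image is $L$ and under which $S\cap L$ pulls back to $\iota^{-1}(S)$.

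Next I would observe that if $P\in \R[\X^{(1)},\ldots,\X^{(\omega)}]_{\leq\dd}$, then by definition $P$ has degree at most $d_i$ in the block of variables $\X^{(i)}$ for each $i$. Since $\iota^{(i)}$ is linear, composing with $\iota$ substitutes linear forms (in the new block coordinates on $\R^{k_i'}$) for each $X^{(i)}_j$. Degree in the $i$-th block is therefore preserved, so $P\circ\iota$ lies in $\R[\Y^{(1)},\ldots,\Y^{(\omega)}]_{\leq\dd}$, where $\Y^{(i)}=(Y_1^{(i)},\ldots,Y_{k_i'}^{(i)})$ are coordinates on $\R^{k_i'}$. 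Consequently the pulled-back family $\mathcal{P}' = \{P\circ\iota\mid P\in\mathcal{P}\}$ has $\card(\mathcal{P}')\leq \card(\mathcal{P})$, satisfies the same multi-degree bound $\dd$, and the pullback of the defining $\mathcal{P}$-closed formula describes $\iota^{-1}(S)=S\cap L$ as a $\mathcal{P}'$-closed semi-algebraic subset of $\R^{k_1'}\oplus\cdots\oplus\R^{k_\omega'}$.

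At this point I would simply apply Theorem~\ref{thm:multi-semi} with the tuples $\kk'=(k_1',\ldots,k_\omega')$ and the unchanged $\dd$, noting $|\kk'|=k'$. This gives
\[
b(\iota^{-1}(S),\Z_2)\;\leq\; O(1)^{k'}\,\card(\mathcal{P}')^{k'}\,\omega^{3k'}\,d_1^{k_1'}\cdots d_\omega^{k_\omega'},
\]
and since $\card(\mathcal{P}')\leq \card(\mathcal{P})$ and $\iota$ is a homeomorphism onto $L$, the left-hand side equals $b(S\cap L,\Z_2)$. This yields the stated bound.

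There is essentially no obstacle here: the only point needing any care is confirming that restriction to a linear subspace respects the block-by-block degree bound, which is immediate because $\iota$ is linear and block-preserving by construction. Everything else is a direct invocation of Theorem~\ref{thm:multi-semi}.
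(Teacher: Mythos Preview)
Your proposal is correct and follows essentially the same approach as the paper: construct the block-diagonal linear inclusion $\iota=\iota^{(1)}\oplus\cdots\oplus\iota^{(\omega)}$, observe that pullback along this linear, block-preserving map preserves the multi-degree bound $\dd$, and then invoke Theorem~\ref{thm:multi-semi} with the smaller block dimensions $(k_1',\ldots,k_\omega')$. Your write-up simply makes explicit the details (choice of coordinates on each $L^{(i)}$, the inequality $\card(\mathcal{P}')\leq\card(\mathcal{P})$) that the paper leaves implicit.
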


\begin{proof}
Note that a polynomial of multi-degree bounded by $\dd$ in $\R^{k_1} \times \cdots \times \R^{k_\omega}$, pulls back to a polynomial on $L$ of degree at most $\dd$,
under the inclusion 
\[
\iota=(\iota_1\oplus \cdots\oplus \iota_\omega):L^{(1)} \oplus \cdots \oplus L^{(\omega)} \hookrightarrow \R^{k_1} \oplus \cdots \oplus \R^{k_\omega}.
\] 
The corollary now follows immediately from Theorem
\ref{thm:multi-semi}.
\end{proof}

\subsection{Symmetric  semi-algebraic sets}
\label{subsec:symmetric}
In this paper we are mostly concerned with semi-algebraic sets which are symmetric. 
In order to define symmetric semi-algebraic sets we first need some more notation.

\begin{notation}
  Let $\mathbf{k}= (k_{1} , \ldots ,k_{\omega}) \in
  \Z_{\geq 0}^{\omega}$, with $k = |\kk| := \sum_{i=1}^{\omega} k_{i}$,  and let $S$ be a
  semi-algebraic subset of $\R^{k}$,
  such that the product of  symmetric groups
  \[
  \mathfrak{S}_{\mathbf{k}}
  :=\mathfrak{S}_{k_{1}} \times \cdots \times \mathfrak{S}_{k_{\omega}}
  \]
  acts on
  $\R^{k}$ by independently permuting each block of coordinates. If $S$ is closed under this action of $\mathfrak{S}_{\mathbf{k}}$, then we say that $S$ is a $\mathfrak{S}_{\mathbf{k}}$-symmetric semi-algebraic set.   We will denote by
  $X/\mathfrak{S}_{\mathbf{k}}$ the \emph{orbit space} of this action. Note that for any symmetric semi-algebraic set $S\subset\R^k$ the corresponding  orbit space  $S/\mathfrak{S}_{\mathbf{k}}$ can be constructed as the image of a polynomial map and thus  is again semi-algebraic (for details see \cite{brocker1998symmetric,Procesi-Schwarz}).
   If
  $\omega =1$, then $k=k_{1}$, and we will denote $\mathfrak{S}_\kk$
  simply by $\mathfrak{S}_{k}$.
\end{notation}

\begin{notation}
\label{not:multisymmetric-polynomial}
Let $\mathbf{k}= (k_{1} , \ldots ,k_{\omega}) \in \Z_{>0}^{\omega}$, with $k= |\kk|$.

We will denote by $\R[\X^{(1)},\ldots,\X^{(\omega)}]^{\mathfrak{S}_{\kk}}_{\leq \dd}$ 
the set of polynomials which are  fixed under the action of $\mathfrak{S}_{\mathbf{k}}
  =\mathfrak{S}_{k_{1}} \times \cdots \times \mathfrak{S}_{k_{\omega}}$
 acting by independently permuting each block of variables $\X^{(i)}$.
 In the case $\omega=1$, $k_1 = k$, $\dd = (d)$, we will denote  
 $\R[\X^{(1)}]^{\mathfrak{S}_{\kk}}_{\leq \dd}$ simply by $\R[X_1,\ldots,X_k]^{\mathfrak{S}_k}_{\leq d}$.
\end{notation}

\subsection{Equivariant cohomology} 
We recall here a few basic facts about equivariant cohomology. 

The important point of the following discussion is that in the setting of the current paper,
for $G$-symmetric semi-algebraic subsets $S \subset \R^k$ (where $G$ is a product of symmetric groups), the 
$G$-equivariant cohomology groups of $S$ with coefficients in a field $\F$ of characteristic $0$,  are isomorphic to 
the singular cohomology of the quotient $S/G$ with coefficients in $\F$ (cf. \eqref{eqn:iso}). 
Thus, bounding the Betti numbers of $S/G$ is equivalent to bounding the $G$-equivariant Betti numbers of $S$.

More precisely, recall that given a  topological space $X$ together with a topological action of  an arbitrary compact Lie group $G$,  one defines the  \emph{equivariant cohomology groups} starting from  a \emph{universal principal $G$-space}, denoted $E G$,
which is contractible, and on which the group $G$ acts freely on the right. The orbit space of this action  is called  the \emph{classifying space} $B G$,  i.e., we have
$B G= E G/G$.

\begin{definition}
  \label{def:equivariant-cohomology} (Borel construction) 
  Let $X$ be a space with a left action of  the group $G$. 
  Then, $G$ acts diagonally on the space $E G \times X$ by $g (z,x) = (z \cdot g^{-1} ,g \cdot x)$. For any
  field of coefficients $\F$, the \emph{$G$-equivariant cohomology
  groups of $X$} with coefficients in $\F$, denoted by
  $\HH^{\ast}_{G} (X,\F)$, is defined by
$\HH^{\ast}_{G} (X,\F)  =  \HH^{\ast} (E G \times X/G,\F)$.
\end{definition}

In the situation of interest in the current paper, where $G= \mathfrak{S}_\kk$ acting on
a $\mathfrak{S}_\kk$-symmetric semi-algebraic subset $S \subset \R^k$,
and $\F$ is a field with characteristic equal to $0$,
we have the isomorphisms (see \cite{BC2013}):

\begin{equation}
\label{eqn:iso}
\HH^{\ast} (S/\mathfrak{S}_\kk,\F) \xrightarrow{\sim} \HH_{\mathfrak{S}_\kk}^{\ast} (S,\F
) \xrightarrow{\sim} \HH^\ast(S,\F)^{\mathfrak{S}_\kk}.
\end{equation}
Therefore, the equivariant Betti numbers are precisely the Betti numbers of the orbit space $S/\mathfrak{S}_\kk$,
and
we will state all the results in the paper in terms of the ordinary Betti numbers of the orbit space.

As mentioned before, equivariant Betti numbers of symmetric real varieties and semi-algebraic sets were studied from a quantitative point of view in \cite{BC2013}. We summarize below the main results proved 
there.

\subsection{Previous Results}
Even though the following result was stated in \cite{BC2013} more generally, with multiple blocks of variables, for ease of reading we state a simplified version having only one block.

Let $S \subset \R^{k}$
  be a $\mathcal{P}$-closed-semi-algebraic set,  where 
  $\mathcal{P} \subset \R[X_1,\ldots, X_k]^{\mathfrak{S}_k}_{\leq d}$, 
  with $\deg (P) \leq  d$ for each $P  \in  \mathcal{P}$, $\card (
  \mathcal{P}) =s$. Then, for all sufficiently large $k > 0$, and any field
 field of coefficients $\F$:
 
\begin{theorem}
  \label{thm:main-sa-closed} 
  \begin{enumerate}[1.]
  \item (Vanishing)
  For all $  i \geq 5 d$,
 \begin{eqnarray*}
    \HH^{i} (S/\mathfrak{S}_{\mathbf{k}} ,\F) & \cong  & 0; 
  \end{eqnarray*}
  \item (Quantitative bound)
  \begin{eqnarray*}
    b (S/\mathfrak{S}_{\mathbf{k}} ,\F) & \leq & s^{5d-1} (O(k))^{4d-1}.
  \end{eqnarray*}
  \end{enumerate}
  \end{theorem}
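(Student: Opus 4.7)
The plan is to exploit the Weyl chamber $W_k = \{x \in \R^k \mid x_1 \leq \cdots \leq x_k\}$ as a fundamental domain for the $\mathfrak{S}_k$-action, together with a stratification of $W_k$ by coincidence type. Because $S$ is closed and $\mathfrak{S}_k$-invariant, the restriction of the quotient map to $S \cap W_k$ is a homeomorphism onto $S/\mathfrak{S}_k$, so it suffices to bound $b^i(S \cap W_k,\F)$ and show this vanishes for $i \geq 5d$.

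For each composition $\lambda = (\lambda_1,\ldots,\lambda_j)$ of $k$ into $j$ positive parts, let $W_k^\lambda \subset W_k$ denote the locally closed stratum of points with coincidence pattern $\lambda$ (the first $\lambda_1$ coordinates equal, the next $\lambda_2$ equal and strictly greater, and so on). The closure $\overline{W_k^\lambda}$ lies inside a $j$-dimensional affine subspace $L^\lambda \subset \R^k$ obtained by the substitution $X_{i_1}=\cdots=X_{i_{\lambda_1}}=Y_1$, and so on for each block. The key observation is that the pullback to $L^\lambda \cong \R^j$ of any $P \in \R[X_1,\ldots,X_k]_{\leq d}^{\mathfrak{S}_k}$ is a polynomial of degree at most $d$ in $j$ variables. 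Hence Corollary \ref{cor:betti-bound-sa-general} applied inside $L^\lambda$ bounds $b(S \cap \overline{W_k^\lambda},\F)$ by an expression of the form $s^{O(j)} d^{O(j)}$ that is independent of $k$.

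To aggregate these piecewise bounds into one for $b(S \cap W_k,\F)$, I would use a Mayer--Vietoris spectral sequence built from suitable open thickenings of the strata, indexed by the poset of compositions of $k$. There are $\binom{k-1}{j-1} = O(k^{j-1})$ compositions with $j$ parts, so the total count is polynomial in $k$ of degree dictated by the largest $j$ that actually contributes. The structural input for the vanishing is that every symmetric polynomial of degree $\leq d$ lies in $\R[p_1,\ldots,p_d]$ (power sums), so the family $\mathcal{P}$ cannot separate two points whose first $d$ power sums agree; this is enough to force the strata with $j > 5d$ blocks to contribute only in low cohomological degrees, yielding $\HH^i(S/\mathfrak{S}_k,\F) = 0$ for $i \geq 5d$. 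The same dimension count caps the relevant range of $j$ in the Betti sum, and combining with the $O(k^{j-1})$ count of compositions produces the $s^{5d-1}(O(k))^{4d-1}$ estimate.

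The hard part will be the technical bookkeeping in the spectral sequence step. Concretely, one must verify that strata with many blocks, which proliferate as $k \to \infty$, really do collapse in the high-degree part of the $E_2$ page, and that the gluing maps between adjacent strata do not create new high-degree classes. This is essentially where the precise exponents $5d-1$ and $4d-1$ originate, and where the hypothesis that $k$ be sufficiently large is consumed in absorbing the lower-order combinatorial overhead coming from the nerve of the cover.
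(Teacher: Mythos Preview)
This theorem is not proved in the present paper; it is quoted from \cite{BC2013}, and the paper explicitly lists the tools used there: infinitesimal equivariant deformations of symmetric varieties, equivariant Morse theory for the first elementary symmetric function, and a bound on the number of distinct coordinates of isolated real solutions of symmetric systems. Your proposal uses none of these. Instead you propose exactly the Weyl-chamber/stratification-by-composition strategy that the \emph{present} paper develops to prove its \emph{new} results (Theorems~\ref{thm:vanishing} and~\ref{thm:bound}), which supersede Theorem~\ref{thm:main-sa-closed} with sharper exponents.

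There is, however, a genuine gap in your outline. The vanishing does not come from ``strata with $j>5d$ blocks contributing only in low cohomological degrees'' via some spectral-sequence bookkeeping; a Mayer--Vietoris argument over the face poset of $W_k$ by itself gives no such collapse. The missing ingredient is a geometric input: Kostov's theorem that the fibers of $\Psi_d^{(k)}=(p_1^{(k)},\ldots,p_d^{(k)})$ over $\R^d$, restricted to the Weyl chamber, are contractible. Together with Vietoris--Begle this shows $S/\mathfrak{S}_k$ is homologically equivalent to a subset of $\R^d$, which is what forces vanishing (at $d$, not $5d$). Similarly, the sharp Betti bound uses Arnold's description of where the maxima of $p_{d+1}^{(k)}$ on these fibers lie, which cuts the relevant compositions down to $\CompMax(k,d)$ and yields the exponent $\lfloor d/2\rfloor-1$ in $k$, not $4d-1$. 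Your observation that symmetric polynomials of degree $\leq d$ lie in $\R[p_1,\ldots,p_d]$ is the right starting point, but without the contractibility of fibers you have no mechanism to pass from $S\cap W_k$ (which is $k$-dimensional) to something $d$-dimensional, and no explanation for any specific vanishing threshold or exponent.
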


The main tools that are used in the proof of  Theorem \ref{thm:main-sa-closed} are the following:

\begin{enumerate}[1.]
\item
\label{itemlabel:intro:1}
Infinitesimal equivariant deformations of symmetric varieties, such that the deformed varieties are symmetric,  and moreover has good algebraic and Morse-theoretic properties (isolated, non-degenerate critical points with respect to the first elementary symmetric function, namely  $e_1^{(k)}(X_1,\ldots,X_k) = \sum_{i=1}^{k}
 X_i$) \cite[\S 4, Proposition 4]{BC2013};
\item
\label{itemlabel:intro:2}
Certain equivariant Morse-theoretic results to quantify changes in the equivariant Betti numbers
at the critical points of a symmetric Morse function \cite[\S 4, Lemmas 6, 7]{BC2013};
\item
\label{itemlabel:intro:3}
A bound on the number of distinct coordinates of isolated real solutions of any real symmetric
polynomial system in terms of the degrees of the polynomials \cite[\S 4, Proposition 5]{BC2013}, 
which leads to a polynomial bound on the number of orbits of the set of critical points.
\end{enumerate}

It was remarked in \cite{BC2013}, that the vanishing results as well as the upper bounds
are perhaps not optimal. In particular, item \eqref{itemlabel:intro:1} in the above list (equivariant deformation) already requires a doubling of the degrees of the polynomials involved mainly for a technical reason in order to prove non-degeneracy of the critical points. 

In this paper, we improve both the vanishing result as well as the exponent of the bounds
in Theorem \ref{thm:main-sa-closed} using a completely different
approach that does not rely on Morse theory. We utilize instead certain theorems of 
Kostov \cite{Kostov}, 
Arnold \cite{Arnold},  and
Giventhal \cite{Giventhal} 
(see Theorems \ref{thm:Kostov},  \ref{thm:arnold}, and \ref{thm:Weyl-diffeo} below)
on the level sets of power sum polynomials.

Our main quantitative results are the following. We separate the vanishing part from the quantitative part
for clarity.

\subsection{Main Quantitative Results}
\label{subsec:main}
\subsubsection{Vanishing}
\label{subsubsec:vanishing}

\begin{theorem}(Vanishing)
  \label{thm:vanishing} 
  Let $\mathbf{k}= (k_{1} , \ldots ,k_{\omega}), \dd = (d_1,\ldots,d_\omega) \in \Z_{\geq 0}^{\omega}$,
  with  $k= \sum_{i=1}^{\omega} k_{i}$. 
 Let $\mathcal{P} \subset \R[\X^{(1)},\ldots,\X^{(\omega)}]^{\mathfrak{S}_\kk}_{\leq \dd}$  be a finite set,  where for each  $i, 1 \leq i \leq \omega$,  $\X^{(i)}$ is a block of $k_{i}$ variables.
  Let $S \subset \R^k$ be $\mathcal{P}$-closed semi-algebraic set.
  Then, for any  field of coefficients $\F$,
 \begin{eqnarray*}
    \HH^p(S/\mathfrak{S}_\kk ,\F) & = & 0,
  \end{eqnarray*}
for all 
\[
p  \geq   \sum_{i=1}^{\omega} \min (k_{i},d_{i}).
\] 
\end{theorem}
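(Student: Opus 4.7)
The plan is to reduce the cohomology of $S/\mathfrak{S}_\kk$ to that of a semi-algebraic subset of a space of dimension $\sum_{i=1}^{\omega} \min(k_i, d_i)$, using a multi-power-sum map together with the cited theorems of Kostov, Arnold and Giventhal on the fibers of power sums restricted to Weyl chambers.

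First I would identify the orbit space $S/\mathfrak{S}_\kk$ with $S \cap W$, where
\[
W \;=\; W_{k_1} \times \cdots \times W_{k_\omega}, \qquad W_{k_i} \;=\; \bigl\{\x^{(i)} \in \R^{k_i} : X_1^{(i)} \leq \cdots \leq X_{k_i}^{(i)} \bigr\},
\]
is the product of the closed Weyl chambers, each being a semi-algebraic fundamental domain for the $\mathfrak{S}_{k_i}$-action. Consider the multi-power-sum map
\[
\Psi : \R^k \longrightarrow \R^{|\dd|}, \qquad \x \longmapsto \bigl(p_j(\x^{(i)})\bigr)_{1 \leq i \leq \omega,\; 1 \leq j \leq d_i}.
\]
Since any polynomial in $\R[\X^{(1)},\ldots,\X^{(\omega)}]^{\mathfrak{S}_\kk}_{\leq \dd}$ is a polynomial expression in the first $d_i$ power sums of the $i$-th block of variables, there is a closed semi-algebraic set $T \subset \R^{|\dd|}$ with $S = \Psi^{-1}(T)$, and thus $S \cap W = (\Psi|_W)^{-1}(T)$.

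Next I would invoke the results of Kostov, Arnold and Giventhal to assert that each restriction $\Psi^{(i)}|_{W_{k_i}}$ has semi-algebraically contractible fibers (in fact homeomorphic to closed balls of dimension $\max(k_i - d_i, 0)$); taking products, $\Psi|_W$ has semi-algebraically contractible fibers, and so does its restriction to $S \cap W$. Moreover, $\Psi(W) \subset \R^{|\dd|}$ is a closed semi-algebraic set of dimension exactly $\sum_{i=1}^{\omega} \min(k_i, d_i)$. Applying a semi-algebraic Vietoris-Begle-type theorem to the surjection $\Psi|_{S \cap W} : S \cap W \twoheadrightarrow \Psi(S \cap W)$ then yields
\[
\HH^p(S/\mathfrak{S}_\kk, \F) \;\cong\; \HH^p(S \cap W, \F) \;\cong\; \HH^p(\Psi(S \cap W), \F),
\]
and since $\Psi(S \cap W)$ is a semi-algebraic subset of $\Psi(W)$, which has dimension $\sum_{i=1}^{\omega} \min(k_i, d_i)$, its singular cohomology vanishes in degrees $\geq \sum_{i=1}^{\omega} \min(k_i, d_i)$.

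The main obstacle I expect is the justification of the Vietoris-Begle step: the map $\Psi|_W$ is not proper, so one cannot apply the classical statement directly. The natural workaround is to first intersect $S \cap W$ with a sufficiently large closed semi-algebraic ball, apply the semi-algebraic Vietoris-Begle theorem (as formulated in \cite{BPRbook2}) to the resulting proper surjection with contractible fibers, and then pass to the full set by a standard semi-algebraic exhaustion argument. A secondary technicality is to ensure that the contractibility of fibers provided by Kostov-Arnold-Giventhal, which is typically stated over $\mathbb{R}$, transfers to an arbitrary real closed field $\R$; this is handled by Tarski-Seidenberg transfer applied to the semi-algebraic statement of fiberwise triviality.
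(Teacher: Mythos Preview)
Your proposal is correct and follows essentially the same route as the paper: both identify $S/\mathfrak{S}_\kk$ with a set in (or homeomorphic to) the product of Weyl chambers, use Kostov's theorem on contractibility of the fibers of the truncated power-sum map together with a Vietoris--Begle argument (plus the reduction to the bounded case and Tarski--Seidenberg transfer, exactly as you anticipate) to pass to a semi-algebraic image of dimension $\sum_i \min(k_i,d_i)$, and conclude by the dimension bound on cohomology. The only cosmetic difference is that the paper first pushes forward by the \emph{full} power-sum map $\boldPsi^{(\kk)}_\kk$ (invoking Giventhal's homeomorphism, Theorem~\ref{thm:Weyl-diffeo}) and then projects, whereas you work directly with $S\cap W$ as a fundamental domain; note also that for the vanishing statement only Kostov's theorem is actually needed---Arnold's result enters later in the paper for the quantitative bounds, not here.
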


\begin{remark}
Notice that Theorem \ref{thm:vanishing} 
improves the
corresponding result in Theorem \ref{thm:main-sa-closed}. 
Moreover, the new result is tight (see Remark \ref{rem:vanishing:tightness} for an example).
\end{remark}

\subsubsection{Quantitative Bounds}
\begin{theorem}
 \label{thm:bound}
 Let $S \subset \R^k$ be a $\mathcal{P}$-closed semi-algebraic set, where 
 \[
 \mathcal{P} \subset \R[X_1,\ldots,X_k]^{\mathfrak{S}_k}_{\leq d}, \card(\mathcal{P}) = s, 
 d> 1.
 \] 
 Let
 \begin{eqnarray*}
F(d,k)  &= & 
(2^d -1) \prod_{i=1}^{\lfloor d/2 \rfloor -1}(k - \lceil d/2 \rceil-i) \mbox{ if $d \leq k$}, \\
&\leq& (2^k -1) (k-1)! \mbox{ if $d > k$},
\end{eqnarray*}
and $d' = \min(k,d)$.
 Then,
 \begin{eqnarray*}
 b(S/\mathfrak{S}_k,\F) &\leq &(O(sdd'))^{d'} F(d,k) \\
                                      &=&   d^{O(d)} s^d k^{\lfloor d/2 \rfloor -1} \mbox{ if }  1 < d  \ll s,k.
 \end{eqnarray*}
 
 In particular, if  $\card(\mathcal{P}) = 1$, and $S = \ZZ(\mathcal{P},\R^k)$,  and $1 < d \ll  k$, then 
 \begin{equation}
 \label{eqn:thm:bound:1}
 b(S/\mathfrak{S}_k,\F) \leq  d^{O(d)} k^{\lfloor d/2 \rfloor -1}.
 \end{equation}
 \end{theorem}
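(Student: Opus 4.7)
The plan is to use the fact that any symmetric polynomial of degree at most $d$ in $X_1,\ldots,X_k$ is a polynomial (of weighted degree at most $d$) in the first $d$ power sums $p_j=X_1^j+\cdots+X_k^j$. Consider the power-sum map $\pi=(p_1,\ldots,p_d)\colon\R^k\to\R^d$: each $P\in\mathcal{P}$ factors as $\tilde P\circ\pi$ with $\tilde P\in\R[Y_1,\ldots,Y_d]$, so $S=\pi^{-1}(\tilde S)$ for a $\tilde{\mathcal{P}}$-closed set $\tilde S\subset\R^d$. Since the closed Weyl chamber $W_k=\{x_1\le\cdots\le x_k\}$ is a fundamental domain for the $\mathfrak{S}_k$-action, the quotient $S/\mathfrak{S}_k$ is homeomorphic to $S\cap W_k$, and it suffices to bound $b(S\cap W_k,\F)$.

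Next I would invoke the theorems of Kostov, Arnold and Giventhal on level sets of power sums (Theorems \ref{thm:Kostov}, \ref{thm:arnold}, \ref{thm:Weyl-diffeo}) to conclude that the non-empty fibers of $\pi|_{W_k}$ are contractible. A Vietoris--Begle/Leray argument then yields
\[
\HH^*(S\cap W_k,\F)\;\cong\;\HH^*(\tilde S\cap\pi(W_k),\F),
\]
reducing the problem to bounding the Betti numbers of a semi-algebraic subset of $\R^d$. The per-stratum bound $(O(sdd'))^{d'}$ from Corollary~\ref{cor:betti-bound-sa-general} applied in the ambient dimension $d'$ then accounts for the ``generic'' portion of $\pi(W_k)$.

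The extra factor $F(d,k)$ arises from stratifying $W_k$ by the \emph{coincidence type} of the coordinates of $\x$: for each composition $\lambda=(m_1,\ldots,m_\ell)$ of $k$, the stratum $W_k^\lambda\subset W_k$ is the image of $\{y_1<\cdots<y_\ell\}\subset\R^\ell$ under the linear embedding that groups coordinates into blocks of sizes $m_1,\ldots,m_\ell$. A symmetric polynomial of degree $\le d$ pulls back to a polynomial of multi-degree bounded by $d$ in the $\ell$ single-variable blocks $y_1,\ldots,y_\ell$, so Corollary~\ref{cor:multi-semi} applies to each stratum with $\omega=\ell$. The Arnold--Giventhal description of $\pi(W_k)$ restricts the non-trivially contributing strata to those with $\ell\le\lfloor d/2\rfloor+1$, and a closed-cover Mayer--Vietoris argument over these strata yields, by counting the compositions of $k$ with few parts, the claimed factor $F(d,k)=(2^d-1)\prod_{i=1}^{\lfloor d/2\rfloor-1}(k-\lceil d/2\rceil-i)$ when $d\le k$.

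The hardest part will be this last step: enumerating precisely the coincidence strata that contribute, so as to recover the exact factor $F(d,k)$ rather than a looser polynomial estimate in $k$. This requires careful use of the Arnold--Giventhal structure to identify the ``dimensional drop'' of $\pi(W_k)$ at each coincidence stratum, together with inductive control over how polynomials of degree $\le d$ behave under successive restrictions to lower-dimensional strata so that Corollary~\ref{cor:multi-semi} can be applied with uniform constants. The regime $d>k$ (so $d'=k<d$) is then handled by the coarser bound $(2^k-1)(k-1)!$ derived from enumerating all strata of the partition lattice of $\{1,\ldots,k\}$, at which point the power-sum reduction is vacuous and the argument collapses to Theorem~\ref{thm:betti-bound-sa-general} applied directly on each stratum.
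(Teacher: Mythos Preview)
Your broad outline---power-sum reduction, Kostov/Arnold/Giventhal to show contractible fibers, Vietoris--Begle, stratification of $W_k$ by coincidence type, and a Mayer--Vietoris assembly---matches the paper's strategy. But there are two genuine gaps that would prevent you from reaching the stated bound.

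First, your description of the contributing strata is off. Arnold's theorem does \emph{not} restrict to compositions with $\ell\le\lfloor d/2\rfloor+1$ parts. What Theorem~\ref{thm:arnold} actually gives is that the section of the map $\Psi^{(k)}_d$ lands in the union of faces $\Weyl_\lambda$ with $\lambda\in\CompMax(k,d)$: compositions of length exactly $d$ whose odd-indexed parts are all equal to $1$. So the relevant polyhedral complex $\Weyl^{(k)}_d$ is $d$-dimensional, and the $k^{\lfloor d/2\rfloor-1}$ arises because only the $\lfloor d/2\rfloor$ even-indexed parts are free (subject to one linear relation). The factor $F(d,k)$ is not a count of compositions but of \emph{chains} in the poset $\Comp(k,d)$ (Proposition~\ref{prop:chainsCompcat}): the $(2^d-1)$ is the number of subchains of a maximal chain of length $d$, and the product counts the maximal chains.

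Second, and more seriously, a naive closed-cover Mayer--Vietoris over the faces $\Weyl_\lambda$ will not give the claimed bound. All of these faces intersect (they share the full diagonal), so the number of nonempty multi-intersections appearing in the Mayer--Vietoris inequalities is far larger than $F(d,k)$. The paper handles this by replacing each $\Weyl_\lambda$ with an infinitesimally thickened and shrunk neighborhood $\widetilde{\Weyl}_\lambda$ (Definition~\ref{def:tilde-W-lambda}), constructed so that $\widetilde{\Weyl}_\lambda\cap\widetilde{\Weyl}_\mu=\emptyset$ unless $\lambda,\mu$ are comparable (Proposition~\ref{prop:empty-intersection}). This forces only \emph{chains} to contribute to the Mayer--Vietoris sum (Corollary~\ref{cor:chains}, Proposition~\ref{prop:MV}), and is precisely what makes the chain count $F(d,k)$ the right combinatorial factor. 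Your proposal does not contain this idea, and without it the combinatorial part of the argument blows up. Note also that the paper carries out the bound inside $\R^k$ on the polyhedral complex $\Weyl^{(k)}_d$ rather than inside $\R^d$ on $\pi(W_k)$; the latter is a semi-algebraic set of uncontrolled description complexity, which your sketch does not address.
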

 \begin{remark}
Notice that the bounds in Theorem \ref{thm:bound} are better than the corresponding bound
in Theorem \ref{thm:main-sa-closed} in the case of fixed $d$ and $s,k \rightarrow \infty$. 
Also it should be noted that the exponent in the bound given in Theorem \ref{thm:bound} is the same for $d$ and $d+1$, if $d$ is even. 

Finally, with regards to tightness, note that for fixed $d$ and large $s,k$, 
the bound in Theorem \ref{thm:bound}, takes the form $d^{O(d)} s^d k^{\lfloor d/2 \rfloor -1}$, and 
neither of the two exponents (i.e the exponent of $s$ which is equal to $d$, and the exponent of $k$ which is equal to $\lfloor d/2\rfloor -1$) in the bound can be improved. In the case of $s$ this follows from the example in \cite[Remark 7]{BC2013}, and in the case of $k$ this follows from
Example \ref{eg:key}. 

\end{remark}

 In the case of multiple blocks we have the following bound (notice that the field of coefficients $\F = \Z_2$ in the 
 following theorem). 
 \begin{theorem}
  \label{thm:bound-general} 
  Let $\mathbf{k}= (k_{1} , \ldots ,k_{\omega}), \dd = (d_1,\ldots,d_\omega) \in
  \Z_{\geq 0}^{\omega}, 
  \dd > 1^\omega
  $, 
  with  $k= |\kk|$. Let $\mathcal{P} \subset \R[\X^{(1)},\ldots,\X^{(\omega)}]^{\mathfrak{S}_\kk}_{\leq \dd}$  be a finite set of polynomials with
  $\card(\mathcal{P}) = s$.
  Let $S \subset \R^k$ be $\mathcal{P}$-closed semi-algebraic set.
  
 Then,
 \[
 b(S/\mathfrak{S}_k,\Z_2) \leq  
  \left(\prod_{i=1}^{\omega} (O(\omega^3 s d_i d_i'))^{d_i'}F(d_i,k_i)\right),
 \]
 where 
 \begin{eqnarray*}
 d_i' &=& \min(k_i,d_i), 1 \leq i \leq \omega,
 \end{eqnarray*}
 and $F(d_i,k_i)$ as in Theorem \ref{thm:bound}.
 
 \end{theorem}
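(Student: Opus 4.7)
The plan is to mimic the proof of Theorem \ref{thm:bound} block-by-block across the $\omega$ blocks of variables, using the multi-degree Betti bound of Corollary \ref{cor:multi-semi} in place of the single-degree bound of Corollary \ref{cor:betti-bound-sa-general}, and then to combine the per-block estimates multiplicatively. The restriction to $\F = \Z_2$ in the statement is forced by this choice, because Corollary \ref{cor:multi-semi} is only available over $\Z_2$. By the isomorphism \eqref{eqn:iso} it suffices to bound $b(S/\mathfrak{S}_\kk,\Z_2)$, and the orbit space factors as a product of Weyl chambers $W^{(i)} \subset \R^{k_i}$, one for each block.

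First, I would use that every $P \in \mathcal{P}$ is a polynomial in the power sums
\[
p_j^{(i)} = \sum_{\ell=1}^{k_i} (X_\ell^{(i)})^j, \quad 1 \leq j \leq d_i,\; 1 \leq i \leq \omega,
\]
so that $S \cap \prod_i W^{(i)}$ is the pullback under the product power-sum map
\[
\Phi = \Phi^{(1)} \times \cdots \times \Phi^{(\omega)} : \prod_i W^{(i)} \longrightarrow \prod_i \R^{d_i'}, \quad d_i' = \min(k_i,d_i),
\]
of a semi-algebraic set $\widetilde{S} \subset \prod_i \R^{d_i'}$ cut out by the same $s$ polynomials re-expressed in the $p_j^{(i)}$. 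Applying the theorems of Kostov, Arnold, and Giventhal (Theorems \ref{thm:Kostov}, \ref{thm:arnold}, \ref{thm:Weyl-diffeo}) in each block, the fibers of $\Phi$ are products of contractible fibers of the individual $\Phi^{(i)}$, hence are themselves contractible; a Vietoris-Begle argument then reduces the bound on $b(S/\mathfrak{S}_\kk,\Z_2)$ to a bound on $b(\widetilde{S},\Z_2)$ modulo contributions from the walls of each $W^{(i)}$.

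Second, I would bound $b(\widetilde{S},\Z_2)$ using Corollary \ref{cor:multi-semi} applied to $\prod_i \R^{d_i'}$, with the $i$-th block contributing dimension $d_i'$ and multi-degree $O(d_i)$ (after re-expression through Newton's identities); this produces a factor of $(O(\omega^3 s d_i))^{d_i'}$ per block. The remaining factor $F(d_i,k_i)$ per block arises from iterating over the wall stratification of $W^{(i)}$ exactly as in the proof of Theorem \ref{thm:bound}: each wall of $W^{(i)}$ imposes equalities $X_\ell^{(i)} = X_{\ell+1}^{(i)}$ in the $i$-th block only, so the wall strata of $\prod_i W^{(i)}$ decompose as Cartesian products of wall strata in each block, and the induction over strata factors cleanly across blocks. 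Combining these contributions gives the claimed product bound.

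The main obstacle will be justifying rigorously that the single-block machinery truly localises in each block. Specifically, one must verify (i) that the fundamental domain for $\mathfrak{S}_\kk$ is the Cartesian product of the fundamental domains for each $\mathfrak{S}_{k_i}$, (ii) that the Kostov-Arnold-Giventhal fiber-contractibility statements are compatible with this product structure so that fibers of $\Phi$ are indeed contractible, and (iii) that the recursion over walls yields a genuine product bound rather than a worse combined one. Point (iii) is the most delicate: a priori, combinatorial cross-terms between wall strata in different blocks could break the multiplicative structure, and ruling this out relies on the fact that the defining polynomials respect the block decomposition of $\mathcal{P}$, so that the multi-degree hypothesis propagates unchanged down each stage of the stratification.
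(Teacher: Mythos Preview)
Your proposal assembles the right ingredients (product of Weyl chambers, Kostov--Arnold--Giventhal fiber contractibility, the multi-degree bound of Corollary~\ref{cor:multi-semi}, block-wise product structure), and your points (i)--(iii) in the last paragraph are all correct and match what the paper uses. But there is a genuine gap in how you combine them.

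The problem is your second step. Vietoris--Begle applied to the map $\Phi$ on $S \cap \prod_i W^{(i)}$ identifies $\HH^*(S/\mathfrak{S}_\kk)$ with the cohomology of the \emph{image} $\Phi(S \cap \prod_i W^{(i)}) = \widetilde{S} \cap \mathrm{Image}(\Phi)$, not of $\widetilde{S}$ itself. Bounding $b(\widetilde{S},\Z_2)$ by Corollary~\ref{cor:multi-semi} is easy but bounds the wrong set; to bound the right set you must also control $\mathrm{Image}(\Phi) = \prod_i \Psi_{d_i}^{(k_i)}(\Weyl^{(k_i)})$, and a direct quantifier-free description of that image involves polynomials whose degrees grow with $k$, so Corollary~\ref{cor:multi-semi} gives nothing useful there. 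Your phrase ``modulo contributions from the walls'' and the later ``iterating over the wall stratification'' do not repair this: in your push-forward picture there are no walls in $\prod_i \R^{d_i'}$, and you do not explain how a stratification upstairs in $\R^{k_i}$ produces the factor $F(d_i,k_i)$ for the set downstairs.

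The paper avoids the push-forward entirely. Using Arnold's theorem (Proposition~\ref{prop:arnold}, multi-block version) it replaces $S/\mathfrak{S}_\kk$ not by $\widetilde{S}$ but by $S_{\kk,\dd} = S \cap \bWeyl^{(\kk)}_\dd \subset \R^k$, the intersection of $S$ with a specific polyhedral complex of dimension $\sum_i d_i'$ built from faces $\bWeyl_{\boldlambda}$, $\boldlambda \in \bComp(\kk,\dd)$. It then covers $S_{\kk,\dd}$ by infinitesimal thickenings $\widetilde{S}_{\boldlambda}$ (Definition~\ref{def:tilde-W-lambda}) engineered so that only \emph{chains} in the poset $\bComp(\kk,\dd)$ give non-empty intersections (Corollary~\ref{cor:chains}); Mayer--Vietoris (Proposition~\ref{prop:MV}) then bounds $b(S/\mathfrak{S}_\kk,\Z_2)$ by a sum over chains $\boldsigma \in \boldSigma_{\kk,\dd}$. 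The factor $\prod_i F(d_i,k_i)$ is precisely $\card(\boldSigma_{\kk,\dd})$ (Proposition~\ref{prop:chainsCompcat}), and this is where your product structure (iii) enters. For each chain, Propositions~\ref{prop:homotopic} and~\ref{prop:P-closed-main} reduce $b(\widetilde{S}_{\boldsigma},\Z_2)$ to Betti bounds on algebraic sets inside the linear span $L_{\boldlambda}$, which has dimension at most $\sum_i d_i'$ and decomposes block-wise; Corollary~\ref{cor:multi-semi} applied here gives the factor $\prod_i(O(\omega^3 s d_i d_i'))^{d_i'}$ and forces $\F=\Z_2$, exactly as you anticipated.
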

 
It is worth noticing that requiring a description by symmetric polynomials is not necessary in the case of symmetric real varieties. 
Since every real symmetric variety defined by (possibly non-symmetric) polynomials of degree at most $d$,
can be defined by one symmetric polynomial  of degree at most $2d$
(by taking the sum of squares of all the polynomials appearing in the orbits of the given polynomials under the action of the symmetric group), 
the above results in particular yield the following.  

\begin{corollary}
\label{cor:vanishing}
Let $\mathcal{P}\subset \R[X_1,\ldots,X_k]_{\leq d}$ with $2d\leq k$  such that $\ZZ(\mathcal{P},\R^k)$ is $\mathfrak{S}_k$ invariant, then
 \[
 b(S/\mathfrak{S}_k,\F) \leq  d^{O(d)} 
 k^{d-1},
 \]
and  
 \[ \HH^p(S/\mathfrak{S}_{\mathbf{k}} ,\F)  =  0,\text{ for all }  p  \geq 2d. \]
\end{corollary}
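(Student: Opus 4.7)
The plan is to follow the hint given in the paragraph immediately preceding the statement, namely to construct a single symmetric polynomial of degree at most $2d$ whose real zero set is exactly $S = \ZZ(\mathcal{P},\R^k)$, and then apply Theorem~\ref{thm:vanishing} and the single-polynomial case of Theorem~\ref{thm:bound}.

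First I would form, for each $P \in \mathcal{P}$, its $\mathfrak{S}_k$-orbit $\{P \circ \sigma : \sigma \in \mathfrak{S}_k\}$, and define the polynomial
\[
Q \defeq \sum_{P \in \mathcal{P}}\ \sum_{\sigma \in \mathfrak{S}_k} (P \circ \sigma)^2 \in \R[X_1,\ldots,X_k].
\]
By construction $Q$ is $\mathfrak{S}_k$-invariant (the inner sum is an orbit sum), and since each $P$ has degree $\leq d$, we have $\deg Q \leq 2d$. The next step is to verify $\ZZ(Q,\R^k) = S$. The inclusion $\ZZ(Q,\R^k) \subseteq S$ is immediate over $\R$ from the sum-of-squares form (taking $\sigma = \mathrm{Id}$ in the inner sum already forces $P(x) = 0$ for all $P \in \mathcal{P}$). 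Conversely, since $S$ is assumed $\mathfrak{S}_k$-invariant, for any $x \in S$ and any $\sigma \in \mathfrak{S}_k$ the point $\sigma(x)$ lies in $S$, so $(P\circ\sigma)(x) = P(\sigma(x)) = 0$ for every $P \in \mathcal{P}$; hence $Q(x) = 0$ and $S \subseteq \ZZ(Q,\R^k)$.

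Once this reduction is in place, the two conclusions are direct applications of the main quantitative results with $\omega = 1$, $\card(\mathcal{P}) = 1$, and effective degree $2d$. For the vanishing, Theorem~\ref{thm:vanishing} applied to the symmetric closed semi-algebraic set $S = \ZZ(Q,\R^k)$ yields $\HH^p(S/\mathfrak{S}_k,\F) = 0$ for all $p \geq \min(k, 2d) = 2d$, where the equality uses the hypothesis $2d \leq k$. For the quantitative bound, inequality~\eqref{eqn:thm:bound:1} in Theorem~\ref{thm:bound}, applied with $d$ replaced by $2d$, gives $b(S/\mathfrak{S}_k,\F) \leq (2d)^{O(2d)} k^{\lfloor 2d/2 \rfloor - 1} = d^{O(d)} k^{d-1}$.

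There is no substantive obstacle beyond the symmetrization step; the only minor subtlety is that the equality $\ZZ(Q,\R^k) = S$ genuinely uses both that we are working over a real closed field (so a sum of squares vanishes iff each summand does) and that $S$ is $\mathfrak{S}_k$-invariant (to ensure the extra polynomials $P\circ\sigma$ do not cut $S$ down further). Once this is recorded, the corollary reduces to a single invocation of each of Theorems~\ref{thm:vanishing} and~\ref{thm:bound}.
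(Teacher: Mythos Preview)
Your proposal is correct and follows exactly the approach indicated in the paper: the paragraph preceding the corollary already sketches the symmetrization step (sum of squares over the $\mathfrak{S}_k$-orbits), and the corollary is then an immediate consequence of Theorems~\ref{thm:vanishing} and~\ref{thm:bound} applied with a single symmetric polynomial of degree at most $2d$. The only cosmetic remark is that inequality~\eqref{eqn:thm:bound:1} is phrased under the informal hypothesis $1 < d \ll k$, so it is slightly cleaner to cite the full bound in Theorem~\ref{thm:bound} (with $s=1$, degree $2d$, $d'=2d$, and $F(2d,k) \leq (2^{2d}-1)k^{d-1}$) to obtain $d^{O(d)}k^{d-1}$ directly under the stated hypothesis $2d \leq k$.
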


\subsection{Symmetric definable sets in an o-minimal structure}
While the main goal of this paper is to study the equivariant Betti numbers of symmetric semi-algebraic,
the methods developed in this paper for bounding the equivariant Betti numbers  of symmetric semi-algebraic sets actually extend to more general situations. We illustrate this by considering certain classes of symmetric definable sets in an arbitrary o-minimal expansion of the real closed field $\R$ (we refer the reader to \cite{Dries} and \cite{Michel2} for basic facts about o-minimal geometry). In the non-equivariant case, quantitative upper bounds on the Betti numbers of definable sets belonging to the Boolean algebra generated by a finite family of the fibers of some fixed definable map was studied in \cite{Basu9} and tight upper bounds were obtained. Here we consider \emph{symmetric} definable sets which are defined as the pull-back of a (not necessarily symmetric ) definable set under a polynomial map which is symmetric (and of some fixed degree). 
Our methods yield the following theorems.

\begin{theorem}
\label{thm:definable1}
Let $V \subset \R^m$ be a closed definable set in an o-minimal structure over $\R$. Then, for all $d >0$, there exists a constant $C_{V,d} > 0$ such that for all $k \geq d$,  and any  polynomial map $F = (F_1,\ldots,F_m): \R^k \rightarrow \R^m$, where $F_i \in \R[X_1,\ldots,X_k]^{\mathfrak{S}_k}_{\leq d}, 1 \leq i \leq m$ we have:
\begin{enumerate}[1.]
\item
\label{itemlabel:thm:definable1:1}
the definable set 
$F^{-1}(V) \subset \R^k$ is symmetric;
\item
\label{itemlabel:thm:definable1:2}
$\HH^p(F^{-1}(V)/\mathfrak{S}_k,\F) = 0$ for $p \geq d$; and,
\item
\label{itemlabel:thm:definable1:3}
\[
b(F^{-1}(V)/\mathfrak{S}_k,\F) \leq C_{V,d} \cdot  k^{\lfloor d/2 \rfloor-1}.
\]
\end{enumerate}
\end{theorem}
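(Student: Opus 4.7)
The plan is to reduce the equivariant cohomology problem on $F^{-1}(V) \subset \R^k$ to an ordinary cohomology problem on a closed definable subset of $\R^d$, using the power sum map and a Vietoris--Begle argument based on Kostov--Arnold--Giventhal. Item \eqref{itemlabel:thm:definable1:1} is immediate: each $F_i$ is $\mathfrak{S}_k$-invariant, hence so is $F$, and the preimage of any set under a $\mathfrak{S}_k$-invariant map is a union of $\mathfrak{S}_k$-orbits.

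For the reduction, I would use the fundamental theorem of symmetric functions to factor the map $F$. Since each $F_i \in \R[X_1,\ldots,X_k]^{\mathfrak{S}_k}_{\leq d}$, we may write $F_i = G_i(p_1,\ldots,p_d)$, where $p_j = X_1^j + \cdots + X_k^j$ are the first $d$ power sums and $G_i \in \R[Y_1,\ldots,Y_d]$ is a polynomial whose coefficients depend only on $V$ and $d$, not on $k$ (the only $k$-dependence is through the power sums themselves). Setting $\Phi = (p_1,\ldots,p_d)\colon \R^k \to \R^d$ and $G = (G_1,\ldots,G_m)\colon \R^d \to \R^m$, we have $F = G \circ \Phi$, so $F^{-1}(V) = \Phi^{-1}(W)$ with $W \defeq G^{-1}(V) \subset \R^d$ a closed definable set depending only on $V,d$. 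By Theorem~\ref{thm:Weyl-diffeo} (Giventhal), combined with Theorems~\ref{thm:Kostov}--\ref{thm:arnold}, the induced map $\bar\Phi\colon \R^k/\mathfrak{S}_k \to \R^d$ is proper with contractible (or empty) fibers onto its image $Y_k \defeq \Phi(\R^k)$. A Vietoris--Begle argument then gives an isomorphism
\[
\HH^p(F^{-1}(V)/\mathfrak{S}_k,\F) \;\cong\; \HH^p(W \cap Y_k,\F)
\]
for all $p \geq 0$ and every field of coefficients $\F$.

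Given this isomorphism, item \eqref{itemlabel:thm:definable1:2} follows from the fact that $W \cap Y_k$ is a closed definable subset of $\R^d$, so its cohomology vanishes in degrees $p > d$ by the standard o-minimal dimension bound, and in degree $p = d$ by Alexander duality in the one-point compactification $S^d$ (any closed subset of $\R^d$ has trivial top cohomology). For item \eqref{itemlabel:thm:definable1:3}, I would stratify the Weyl chamber and its image by the multiplicity pattern (``type'') of the ordered coordinates: a point $(x_1 \leq \cdots \leq x_k)$ has type $(\lambda_1,\ldots,\lambda_r)$ recording the blocks of equal consecutive values. The number of types with $r$ distinct values is $\binom{k-1}{r-1}$. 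Arguing as in the proof of Theorem~\ref{thm:bound}, only types with $r \leq \lfloor d/2 \rfloor$ distinct values contribute nontrivially to the topology of $W \cap Y_k$ (this is precisely where the exponent $\lfloor d/2 \rfloor - 1$ enters), while the Betti number of the intersection of $W$ with each individual stratum is bounded by a constant depending only on $V$ and $d$ via the finiteness results available in the o-minimal structure. A Mayer--Vietoris / spectral sequence argument for the stratification then yields $b(W \cap Y_k, \F) \leq C_{V,d} \cdot k^{\lfloor d/2 \rfloor - 1}$.

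The main obstacle is the quantitative step: one needs to verify that the strata of $Y_k$ indexed by a fixed type $\lambda$ are cut out uniformly in $k$ (so that $b(W \cap Y_k^\lambda)$ is bounded by a constant depending only on $V$ and $d$), and that the restriction of $\bar\Phi$ over these strata continues to have contractible fibers so that the Vietoris--Begle reduction can be carried out stratum by stratum. In the semi-algebraic setting this is where Theorem~\ref{thm:multi-semi} is invoked; in the general o-minimal setting, the analogous uniform finiteness has to be extracted from the definable triangulation / uniform bounds on Betti numbers of fibers of a fixed definable family, which is what produces the constant $C_{V,d}$. The vanishing is then essentially automatic once the reduction to $W \cap Y_k \subset \R^d$ is in hand.
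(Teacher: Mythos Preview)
Your overall strategy matches the paper's: factor $F$ through the power-sum map $\Psi^{(k)}_d$, apply a Vietoris--Begle/Kostov--Giventhal argument to identify $\HH^*(F^{-1}(V)/\mathfrak{S}_k)$ with the cohomology of a definable subset of $\R^d$ (yielding Part~\eqref{itemlabel:thm:definable1:2}), and then run a Mayer--Vietoris/stratification argument together with o-minimal uniform finiteness for Part~\eqref{itemlabel:thm:definable1:3}. Two points require correction.

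First, a minor one: $W = G^{-1}(V)$ depends on $G$ and hence on $F$, not only on $V$ and $d$ as you assert. You implicitly acknowledge this later when invoking ``uniform bounds on Betti numbers of fibers of a fixed definable family''; the fix, as in the paper's Proposition~\ref{prop:OPTM-definable}, is that the polynomial maps $G\colon \R^d \to \R^m$ of degree $\le d$ themselves form a definable family, so Hardt triviality yields only finitely many topological types among the $G^{-1}(V)$, whence a constant $C_{V,d}$ independent of $F$.

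Second, and more substantively, your combinatorial justification for the exponent $\lfloor d/2\rfloor - 1$ is incorrect. The claim that ``only types with $r \le \lfloor d/2 \rfloor$ distinct values contribute'' is false: the relevant faces $\Weyl_\lambda$ have $\length(\lambda)$ up to $d$, not $\lfloor d/2 \rfloor$, and indeed $S_{k,d}$ (equivalently $W \cap Y_k$) is $d$-dimensional, so a cover by strata of dimension $\le \lfloor d/2\rfloor$ cannot carry its cohomology. The exponent arises instead from Arnold's description of the maxima of $p^{(k)}_{d+1}$ on the fibers of $\Psi^{(k)}_d$ (Theorem~\ref{thm:arnold}\,\eqref{itemlabel:thm:arnold:b}): these lie on faces $\Weyl_\lambda$ with $\lambda \in \CompMax(k,d)$, i.e.\ compositions of length $d$ in which every \emph{odd}-indexed part equals $1$. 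Only the $\lfloor d/2 \rfloor - 1$ even-indexed parts are then free, giving $\binom{k-\lceil d/2\rceil-1}{\lfloor d/2\rfloor - 1} = O_d(k^{\lfloor d/2\rfloor - 1})$ such faces, and the number of chains in the poset $\Comp(k,d)$ obeys the same bound (Proposition~\ref{prop:chainsCompcat}). The paper covers $S_{k,d}$ by infinitesimal tubes around these faces, arranged so that only chains produce nonempty intersections (Corollary~\ref{cor:chains}), and applies Mayer--Vietoris; in the o-minimal setting the infinitesimals are replaced by sufficiently small positive reals. Your argument should be amended to use this Arnold-based combinatorics rather than the restriction to $r \le \lfloor d/2\rfloor$, which happens to give the right exponent for the wrong reason.
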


Following \cite{Basu9} we now define the definable analog of $\mathcal{P}$-closed semi-algebraic sets (cf. Definition \ref{def:sign-condition}).

\begin{definition}[$\mathcal{A}$-closed sets]
\label{def:A-closed}
For any finite family $\mathcal{A} =\{A_1,\ldots,A_s\}$ of definable subsets of $\R^k$, we call a definable subset $S \subset \R^k$ to be an 
\emph{$\mathcal{A}$-closed set},
 if $S$ is a finite union of sets of the form 
\[
\bigcap_{i \in I} A_i 
\]
where $I \subset [1,s]$.
\end{definition}

 The following generalization of Theorem \ref{thm:definable1} holds.

\begin{theorem}
\label{thm:definable2}
Suppose that  $V \subset \R^m \times \R^\ell$ is a closed definable set in an o-minimal structure over $\R$, and $\pi_1: \R^m \times \R^\ell \rightarrow \R^m, \pi_2: \R^m \times \R^\ell \rightarrow \R^\ell$ the two projection maps, and  for $ y \in \R^\ell$ denote by 
$V_y$ the definable set $\pi_1(\pi_2^{-1}(y) \cap V)$. Then for each $d > 0$, there exists a constant $C_{V,d} > 0$, such that for every finite subset $A \subset \R^\ell$, and every $\mathcal{A}$-closed set $S \subset \R^m$, where $\mathcal{A} = \cup_{y \in A} \{V_y\}$, the following holds.

For any $k \geq d$,  and any polynomial map $F = (F_1,\ldots,F_m): \R^k \rightarrow \R^m$, where $F_i \in \R[X_1,\ldots,X_k]^{\mathfrak{S}_k}_{\leq d}$,  $1 \leq i \leq m$ we have:
\begin{enumerate}[1.]
\item
the definable set 
$F^{-1}(S) \subset \R^k$ is symmetric;
\item
$\HH^p(F^{-1}(S)/\mathfrak{S}_k,\F) = 0$ for $p \geq d$; and, 
\item
\[
b(F^{-1}(S)/\mathfrak{S}_k,\F) \leq C_{V,d}\cdot s^d \cdot k^{\lfloor d/2 \rfloor-1},
\]
where $s = \card(A)$.
\end{enumerate}
\end{theorem}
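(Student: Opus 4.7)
The plan is to mimic the strategy used for Theorem \ref{thm:definable1}, inserting at the very end a Mayer–Vietoris step that accounts for the extra $s$ parameter coming from the $\mathcal{A}$-closed structure.

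\smallskip

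\textbf{Step 1 (factorization through power sums).} Since $k \geq d$ and each $F_i$ is $\mathfrak{S}_k$-symmetric of degree $\leq d$, the fundamental theorem on symmetric functions lets us write $F_i = G_i(p_1,\ldots,p_d)$ for polynomials $G_i \in \R[Y_1,\ldots,Y_d]$, where $p_j = \sum_{\ell=1}^k X_\ell^j$ is the $j$-th power sum. Setting $\Phi_{d,k} = (p_1,\ldots,p_d) : \R^k \to \R^d$ and $G = (G_1,\ldots,G_m) : \R^d \to \R^m$, we obtain $F = G \circ \Phi_{d,k}$. The map $\Phi_{d,k}$ is $\mathfrak{S}_k$-invariant, so $F^{-1}(S) = \Phi_{d,k}^{-1}(G^{-1}(S))$ is $\mathfrak{S}_k$-invariant, giving part (1). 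Moreover, since $G^{-1}(S) \cap \Phi_{d,k}(\R^k)$ sits inside $\R^d$, part (2) will follow from Step 2.

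\smallskip

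\textbf{Step 2 (cohomology descent via Kostov--Arnold--Giventhal).} The theorems of Kostov, Arnold, and Giventhal (\ref{thm:Kostov}, \ref{thm:arnold}, \ref{thm:Weyl-diffeo}) imply that each fiber $\Phi_{d,k}^{-1}(a)/\mathfrak{S}_k$ is contractible for $a \in \Phi_{d,k}(\R^k)$. Applied to the proper definable map
\[
\bar\Phi : F^{-1}(S)/\mathfrak{S}_k \;\longrightarrow\; G^{-1}(S)\cap \Phi_{d,k}(\R^k),
\]
the Vietoris--Begle theorem yields an isomorphism in cohomology. Hence
\[
\HH^p(F^{-1}(S)/\mathfrak{S}_k,\F) \;\cong\; \HH^p\!\bigl(G^{-1}(S)\cap \Phi_{d,k}(\R^k),\F\bigr),
\]
which vanishes for $p\geq d$ since the target sits in $\R^d$, establishing part (2).

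\smallskip

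\textbf{Step 3 (bounding the target via $\mathcal{A}$-closed combinatorics).} For each $y \in A$, let $W_y := G^{-1}(V_y)\cap \Phi_{d,k}(\R^k)$, so that $\mathcal{W} := \{W_y\}_{y\in A}$ is a family of closed definable subsets of the definable set $T := \Phi_{d,k}(\R^k) \subset \R^d$. The set $G^{-1}(S)\cap T$ is $\mathcal{W}$-closed in $T$ in the sense of Definition \ref{def:A-closed}. Crucially, the family $\{W_y\}$ is cut out as fibers over $y$ of the single definable set $\{(x,y)\in T \times \R^\ell : (G(x),y)\in V\}$; this is precisely the hypothesis under which the Mayer--Vietoris inequality of \cite{Basu9} applies to bound Betti numbers of $\mathcal{W}$-closed sets. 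Using that only intersections of at most $d+1$ members of $\mathcal W$ contribute to cohomology in dimensions $\le d-1$ (by the vanishing already established for each such intersection), we obtain
\[
b(G^{-1}(S)\cap T,\F) \;\leq\; \sum_{j=1}^{d+1}\binom{s}{j}\; \max_{\substack{I\subset A\\|I|\leq d+1}} b\!\left(\bigcap_{y\in I} W_y,\F\right).
\]

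\smallskip

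\textbf{Step 4 (reduction to Theorem \ref{thm:definable1} for each intersection).} Each intersection $\bigcap_{y\in I} W_y = F^{-1}(\bigcap_{y\in I} \pi_1(\pi_2^{-1}(y)\cap V))/\mathfrak{S}_k$ (identified via the Vietoris--Begle isomorphism backwards) fits the hypothesis of Theorem \ref{thm:definable1} applied to the closed definable set $V_I := \bigcap_{y\in I}\pi_1(\pi_2^{-1}(y)\cap V) \subset \R^m$. Thus each such intersection is bounded by $C_{V_I,d}\cdot k^{\lfloor d/2\rfloor - 1}$. Since $I$ ranges over finitely many possibilities controlled only by $V$ and $d$ (the family $\{V_I : I\subset A,\ |I|\le d+1\}$ comes from fiberwise intersections of the fixed definable set $V$, so there is a uniform constant $C_{V,d}$ bounding all the $C_{V_I,d}$), we conclude
\[
b(F^{-1}(S)/\mathfrak{S}_k,\F) \;\leq\; C_{V,d}\cdot s^d \cdot k^{\lfloor d/2\rfloor - 1}.
\]

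\smallskip

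\textbf{Main obstacle.} The delicate point is Step 4: establishing a \emph{uniform} constant $C_{V,d}$ that bounds the constants appearing in Theorem \ref{thm:definable1} for every intersection $V_I$, $|I|\leq d+1$. One has to verify that the constants extracted from the proof of Theorem \ref{thm:definable1} can be taken to depend only on the original family $V \subset \R^m\times\R^\ell$ (via finitely many definable parameters like the number of cells in a cylindrical decomposition adapted to $V$) and on $d$, rather than on the specific subset $I$. This requires a uniform-in-parameters version of whichever bound underlies Theorem \ref{thm:definable1}, invoked once for the entire family $\{V_I\}$, which is itself definable in the same o-minimal structure.
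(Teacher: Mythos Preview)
Your overall plan differs from the paper's. The paper does not first pass to $\R^d$ and then run a Mayer--Vietoris over the $\mathcal{A}$-structure there; instead it reruns the proof of Theorem~\ref{thm:definable1} verbatim (Weyl-face decomposition $S_{k,d}$, chains in $\Comp(k,d)$, Mayer--Vietoris over chains as in Proposition~\ref{prop:MV}) and only at the very last step---bounding the Betti numbers of the restriction to each $\leq d$-dimensional flat---swaps Part~\eqref{itemlabel:prop:OPTM-definable:1} of Proposition~\ref{prop:OPTM-definable} for Part~\eqref{itemlabel:prop:OPTM-definable:2}. That second part already packages the $s^{d}$ contribution from the $\mathcal{A}$-closed structure (its proof is Part~1 together with Theorem~2.3 of \cite{Basu9}), so no separate combinatorial argument over $\mathcal{A}$ is needed.

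Your route---handle the $\mathcal{A}$-combinatorics first, then invoke Theorem~\ref{thm:definable1} as a black box on each piece---could in principle be made to work, but Step~3 as written has a genuine gap. The displayed inequality
\[
b(G^{-1}(S)\cap T,\F) \;\leq\; \sum_{j=1}^{d+1}\binom{s}{j}\; \max_{|I|\leq d+1} b\!\left(\bigcap_{y\in I} W_y,\F\right)
\]
is the Mayer--Vietoris bound for a \emph{union} of $s$ closed sets (Proposition~\ref{prop:prop1}\eqref{itemlabel:prop:prop1:1}), not for a general $\mathcal{A}$-closed set. An $\mathcal{A}$-closed set is a union of possibly $2^s$ intersections $\bigcap_{i\in I_\alpha}A_i$, and running the union spectral sequence over those pieces produces $\binom{N}{j}$ terms with $N$ potentially exponential in $s$; grouping by the value of $\bigcup_\alpha I_\alpha$ does not obviously collapse this to $\binom{s}{j}$. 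The machinery of \cite{Basu9} you cite does treat general $\mathcal{A}$-closed sets, but it proceeds via infinitesimal thickening and sign-condition realizations (as in \S\ref{subsec:ordinary-Betti} here), and the pieces that emerge are not simply $\bigcap_{y\in I}W_y$, so they cannot be fed directly into Theorem~\ref{thm:definable1} as your Step~4 proposes. The uniformity issue you flag as the ``main obstacle'' is, by contrast, minor and your resolution (the $V_I$ for $|I|\le d+1$ are fibers of a single definable family over $(\R^\ell)^{d+1}$, so Hardt triviality gives a uniform constant) is correct; the real obstacle is obtaining a combinatorially controlled reduction in Step~3.
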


\begin{remark}
\label{rem:definable}
In Theorem \ref{thm:definable2}, if one wants to bound the ordinary Betti numbers of $F^{-1}(S)$, then
an upper bound of the form $b(F^{-1}(S),\F) \leq C_{V,d,k}\cdot s^k$ follows immediately from Theorem 2.3 in \cite{Basu9},
however the constant $C_{V,d,k}$ depends on $k$ and hence the dependence of the bound on $k$ is not explicit. In contrast, in  Theorems \ref{thm:definable1} and \ref{thm:definable2}, the constant $C_{V,d}$ is independent of $k$, and the dependence of the stated bounds on $k$ is explicit.
\end{remark}

\begin{example}
\label{eg:definable}
We now give an illustration of application of Theorem \ref{thm:definable2} for bounding the equivariant Betti numbers of a certain  concrete sequence of 
definable sets in an o-minimal structure larger than the  o-minimal structure of semi-algebraic sets. 
Consider the o-minimal structure $\mathbb{R}_{\mathrm{exp}}$ (the real numbers equipped with the exponential function). 
Theorem \ref{thm:definable2} then implies that for every fixed $m,d > 0$, there exists a constant $C_{m,d} > 0$ such that for any 
$F_1,\ldots,F_m \in \mathbb{R}[X_1,\ldots,X_k]^{\mathfrak{S}_k}_{\leq d}$, and 
$ 
(a_{1,1},\ldots,a_{1,m}),\ldots, 
(a_{s,1},\ldots,a_{s,m}) \in \mathbb{R}^m$, denoting by
$S \subset \mathbb{R}^k$, the \emph{union} of the $s$ definable subsets of $\mathbb{R}^k$ defined by the $s$ equations
\begin{eqnarray*}
a_{1,1} e^{F_1} + \cdots + a_{1,m} e^{F_m} &=& 0, \\
\vdots&\vdots &\vdots\\
a_{s,1} e^{F_1} + \cdots + a_{s,m} e^{F_m} &=& 0,
\end{eqnarray*}
the inequality
\[
b(S/\mathfrak{S}_k,\F) \leq C_{m,d} \cdot s^d \cdot k^{\lfloor d/2 \rfloor-1}
\]
holds.
\end{example}

\subsection{Algorithm}
\label{subsec:algo-results}
An important consequence of our new method is that we also obtain an algorithm with \emph{polynomially bounded  complexity} (for every fixed degree) for computing the rational equivariant Betti numbers of closed, symmetric semi-algebraic subsets of $\R^k$. This answers
a question posed in \cite{BC2013}. 

More precisely,
it was asked  in \cite{BC2013} whether there exists for every fixed $d$, an 
algorithm for computing the equivariant Betti numbers of symmetric $\mathcal{P}$-closed semi-algebraic subsets
of $\R^k$, where $\mathcal{P} \subset \R[X_1,\ldots,X_k]^{\mathfrak{S}_k}_{\leq d}$, and
whose complexity is bounded polynomially in $\card(\mathcal{P})$ and $k$ (for constant $d$).
Using the method of equivariant deformation and equivariant Morse theory, an 
algorithm with polynomially bounded complexity for computing (both the equivariant as well as the ordinary)
Euler-Poincar\'e characteristics of symmetric algebraic sets appears in \cite{BC2013-ep}. However, this method does not extend to an 
algorithm for computing all the equivariant Betti numbers, and indeed it is well known that the algorithmic 
problem of computing the Euler-Poincar\'e
characteristic is simpler than that of computing all the individual Betti numbers.

In the classical Turing machine model the problem of computing Betti numbers (indeed just the number of 
connected components) of a real variety defined by a polynomial of degree $4$ is $\mathbf{PSPACE}$-hard
\cite{Reif79}.
On the other hand it follows from the existence of doubly exponential algorithms for semi-algebraic triangulation
(see \cite{BPRbook2} for definition)
of real varieties, 
that there also exist algorithms with  doubly exponential complexity for computing the Betti numbers of real varieties and semi-algebraic sets
\cite{SS}.
The following theorem that we prove in this paper shows that the equivariant case is markedly different from the point of view of algorithmic complexity.

\begin{theorem}
\label{thm:algorithm}
For every fixed $d \geq 0$, there exists an algorithm that takes as input a 
$\mathcal{P}$-closed formula $\Phi$, where $\mathcal{P} \subset \R[X_1,\ldots,X_k]^{\mathfrak{S}_k}_{\leq d}$, and outputs 
$b^i(S/\mathfrak{S}_k,\F), 0 \leq i < d$, where $S = \RR(\Phi,\R^k)$. The complexity of this algorithm  is bounded by 
$(\card(\mathcal{P}) k d)^{2^{O(d)}}$.
 \end{theorem}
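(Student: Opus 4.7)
The plan is to algorithmize the geometric reduction underlying the proof of Theorem \ref{thm:bound}. By the vanishing result (Theorem \ref{thm:vanishing}) applied under the hypotheses of the current statement, the only possibly nonzero equivariant Betti numbers are $b^0(S/\mathfrak{S}_k,\F),\ldots,b^{d-1}(S/\mathfrak{S}_k,\F)$. So the algorithm needs to return only $d$ numbers, a count depending on $d$ alone, which already suggests one can avoid the doubly-exponential blow-up in $k$ that plagues direct semi-algebraic triangulation.

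First, I would pass from the ambient variables $X_1,\ldots,X_k$ to the first $d$ power sum polynomials $p_1,\ldots,p_d$ on $\R^k$. Each $P \in \mathcal{P}$, being symmetric of degree at most $d$, equals $\tilde P(p_1,\ldots,p_d)$ for a unique $\tilde P \in \R[Y_1,\ldots,Y_d]$; the collection $\widetilde{\mathcal{P}} = \{\tilde P : P \in \mathcal{P}\}$ can be computed in time polynomial in $\card(\mathcal{P})$, $k$, and $d$ by standard Newton-identity manipulations. Using the theorems of Kostov, Arnold, and Giventhal (Theorems \ref{thm:Kostov}, \ref{thm:arnold}, \ref{thm:Weyl-diffeo}) on the fibers of the power-sum map restricted to the Weyl chamber -- the very tools that drive the proof of Theorem \ref{thm:bound} -- I would argue that the induced continuous map
\[
\bar\boldpi_d : S/\mathfrak{S}_k \longrightarrow \widetilde S := \boldpi_d(S) \subset \R^d, \quad \boldpi_d = (p_1,\ldots,p_d),
\]
has fibers that are acyclic in degrees $<d$ on each sign stratum of $\mathcal{P}$. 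A Leray-type spectral-sequence comparison then yields $\HH^i(S/\mathfrak{S}_k,\F) \cong \HH^i(\widetilde S,\F)$ for every $i < d$. Crucially, $\widetilde S$ is a $\widetilde{\mathcal{P}}$-closed semi-algebraic subset of the semi-algebraic region $\mathcal{K}_{d,k} = \boldpi_d(\R^k) \subset \R^d$, cut out by inequalities of degree depending only on $d$ (via the classical discriminant characterisation of power-sum vectors of real tuples).

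With the reduction complete, I would invoke the existing semi-algebraic Betti number algorithms of \cite{BPRbook2}, applied to $\widetilde S$. In ambient dimension $D$ with $s$ polynomials of degree at most $\delta$ their complexity is of order $(s\delta)^{D^{O(1)}}$, and substituting $D = d$, $s = O(\card(\mathcal{P}))$, $\delta = d^{O(1)}$ produces the stated bound $(\card(\mathcal{P}) k d)^{2^{O(d)}}$ once the $k$-dependent cost of rewriting in power sums is absorbed.

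The main obstacle is the second step: rigorously identifying the fibers of $\bar\boldpi_d$ against the sign stratification of $\mathcal{P}$, and verifying that they are indeed $(d{-}1)$-acyclic so that the spectral-sequence comparison genuinely yields the desired cohomology isomorphism in low degrees. This is precisely where the geometry of power-sum level sets enters, and where the present algorithmic approach diverges fundamentally from the equivariant Morse-theoretic approach of \cite{BC2013}, which has no natural polynomial-time counterpart.
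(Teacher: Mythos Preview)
Your overall strategy matches the paper's: reduce to a semi-algebraic set in $\R^d$ via the power-sum map $\Psi^{(k)}_d = (p_1,\ldots,p_d)$, then apply a doubly-exponential (in $d$) triangulation-based algorithm there. The cohomological justification is already in the paper as Proposition~\ref{prop:vanishing}, Part~\eqref{itemlabel:prop:vanishing:2}: the fibers of $\Psi^{(k)}_d$ on the Weyl chamber are \emph{contractible} (Kostov), so $\HH^*(S/\mathfrak{S}_k,\F)\cong\HH^*(\Psi^{(k)}_d(S),\F)$ in \emph{all} degrees, and you do not need the hedged ``acyclic in degrees $<d$'' Leray argument.

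The genuine gap is in your second step. You write $\widetilde S = \RR(\widetilde\Phi,\R^d)\cap\mathcal{K}_{d,k}$ with $\mathcal{K}_{d,k}=\Psi^{(k)}_d(\R^k)$, and assert that $\mathcal{K}_{d,k}$ is ``cut out by inequalities of degree depending only on $d$ (via the classical discriminant characterisation of power-sum vectors of real tuples).'' No such classical description exists for $d<k$: the region $\mathcal{K}_{d,k}$ genuinely depends on $k$ (already for $d=2$ it is $\{y_2\ge y_1^2/k\}$), its boundary is a union of $O(k^{d-2})$ distinct hypersurface patches indexed by compositions of $k$ of length $d-1$, and you give no algorithm to compute it. Naive quantifier elimination over the $k$ variables $X_1,\ldots,X_k$ would cost $(sd)^{k^{O(1)}}$, destroying the polynomial dependence on $k$.

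The paper sidesteps this entirely. Rather than separately describing $\mathcal{K}_{d,k}$, it computes $\Psi^{(k)}_d(S)$ \emph{directly} as $\bigcup_\lambda \Psi^{(k)}_d(S\cap\Weyl_\lambda)$, the union running over the $O(k)^d$ faces $\Weyl_\lambda$ of the Weyl chamber with $\length(\lambda)=d$. This equality is exactly the content of the Arnold section (Proposition~\ref{prop:arnold-preparation} and Proposition~\ref{prop:arnold}). Since each $\Weyl_\lambda$ lies in a $d$-dimensional linear subspace $L_\lambda$, the image $\Psi^{(k)}_d(S\cap\Weyl_\lambda)$ is computed by quantifier elimination over only $d$ variables (Corollary~\ref{cor:qe}), at cost $(sd)^{O(d^2)}$ per face. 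The resulting formula $\Theta$ for $\Psi^{(k)}_d(S)$ is then fed to the triangulation algorithm (Theorem~\ref{thm:triangulation}), whose complexity $(\card\,\mathcal{Q}\cdot D)^{2^{O(d)}}$ in ambient dimension $d$ yields the stated bound. (Incidentally, your claimed complexity $(s\delta)^{D^{O(1)}}$ for computing all Betti numbers in ambient dimension $D$ is not known; one really does need the doubly-exponential triangulation here, which is harmless since $D=d$ is fixed.)
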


\begin{remark}
\label{rem:algorithm}
Notice that for fixed $d$ the complexity of the algorithm in Theorem \ref{thm:algorithm} is polynomial in $\card(\mathcal{P})$ and $k$.
\end{remark}

\section{Proofs of the main theorems} 

\subsection{Outline of the proofs}
\label{subsec:outline}
As mentioned in the Introduction the main ideas behind the proofs of Theorems \ref{thm:vanishing}, \ref{thm:bound}, and \ref{thm:bound-general} are quite different from the Morse theoretic arguments used in \cite{BC2013}. For convenience of the reader we outline the main ideas that are used first.

In order to prove  to Theorem \ref{thm:vanishing}, we prove directly that if $S \subset \R^k$ is a closed and bounded symmetric semi-algebraic set, defined by symmetric polynomials of degree at most 
$d \leq k$, then $S/\mathfrak{S}_k$ is homologically equivalent to a certain semi-algebraic subset of  $\R^d$ (Part \eqref{itemlabel:prop:vanishing:2} of Proposition \ref{prop:vanishing} below). This immediately implies the vanishing of the higher cohomology groups of  $S/\mathfrak{S}_k$. In order to prove the homological  equivalence we use
certain results on the properties of Vandermonde mappings due to Kostov and Giventhal
(see Theorems \ref{thm:Kostov} and \ref{thm:Weyl-diffeo} below). This argument avoids the technicalities of having to produce a good equivariant deformation required in the Morse-theoretic arguments for proving a similar vanishing result in \cite{BC2013}, which led to a worse
bound on the vanishing threshold in terms of the degrees ($2d$ in the algebraic case, and $5d$ in the semi-algebraic case).

In order to prove  the upper bounds on the equivariant Betti numbers of symmetric semi-algebraic sets  (Theorems \ref{thm:bound} and \ref{thm:bound-general})  we prove first that
if $S \subset \R^k$ is a closed and bounded symmetric semi-algebraic set, defined by symmetric polynomials of degree at most 
$d \leq k$, then 
$S/\mathfrak{S}_k$, is homologically equivalent to the intersection, $S_{k,d}$, of $S$ with a 
certain polyhedral complex of dimension $d$ in $\R^k$ (Proposition \ref{prop:vanishing}) -- namely, the subcomplex formed by certain $d$-dimensional faces of the Weyl chamber defined by
$X_1 \leq X_2 \leq \cdots \leq X_k$ 
(cf. Propositions \ref{prop:vanishing} and \ref{prop:arnold}).
Thus, in order to bound the Betti numbers of $S/\mathfrak{S}_k$, it suffices to bound the Betti numbers of
$S_{k,d}$ (see Part \eqref{itemlabel:prop:arnold:c} of Proposition \ref{prop:arnold}).

The number of $d$-dimensional faces of the Weyl chamber that we need to consider 
is $$\binom{k -\lceil d/2\rceil-1}{\lfloor d/2 \rfloor -1} = (O_d(k))^{\lfloor d/2 \rfloor-1}.$$ Since the intersection of each one of these faces with $S$
is contained in a linear subspace of dimension $d$, the Betti numbers of such intersections can be bounded by a polynomial in $s, k$ of degree $d$ (cf. Corollary \ref{cor:multi-semi}). Moreover, the intersections amongst these sets are themselves intersections of $S$ with faces of the Weyl chamber of smaller dimensions. We then use inequalities coming from the  Mayer-Vietoris spectral sequence (cf. Proposition \ref{prop:MV}) to obtain a bound on $S/\mathfrak{S}_k$. However, a straightforward argument using Mayer-Vietoris inequalities will produce a much worse bound than claimed in  Theorems \ref{thm:bound} and \ref{thm:bound-general}. This is because the number of possibly non-empty intersections that needs to be accounted for would be too large.
In order to control this combinatorial part we use an argument involving infinitesimal thickening and shrinking of the faces of the Weyl chambers. Such perturbations involve extending the field $\R$, to a real closed field of Puiseux series in the infinitesimals that are introduced with coefficients in $\R$. We recall some basic facts about fields of Puiseux series in \S \ref{subsubsec:puiseux}. After replacing the faces of the Weyl chambers by certain new sets defined in terms of  infinitesimal thickening and shrinking, we show that
only flags (not necessarily complete flags) of faces contribute to the Mayer-Vietoris inequalities 
(Corollary \ref{cor:chains}). The number of such flags is bounded by $(O_d(k))^{\lfloor d/2 \rfloor-1}$ (cf. Proposition \ref{prop:chainsCompcat}).
This together with bounds on the Betti numbers of semi-algebraic sets in terms of the multi-degrees of the defining polynomials (cf. Corollary  \ref{cor:multi-semi})  lead to the claimed bounds.
In the o-minimal category (proofs of Theorems \ref{thm:definable1} and \ref{thm:definable2}), we follow the same strategy, except the explicit bounds on the Betti numbers as in Corollary  \ref{cor:multi-semi} are replaced by bounds involving a constant that depends on the given definable family (the dependence of the other parameters remain the same as in the semi-algebraic case). Since these proofs are quite
similar to the ones in the semi-algebraic case, we only give a sketch of the arguments indicating the modifications that need to be made from the semi-algebraic case.

\subsection{Preliminaries}
\label{subsec:prelim}
In this section we recall some basic facts about real closed fields and real
closed extensions.

\subsubsection{Real closed extensions and Puiseux series}
\label{subsubsec:puiseux}
We will need some
properties of Puiseux series with coefficients in a real closed field. We
refer the reader to {\cite{BPRbook2}} for further details.

\begin{notation}
  For $\R$ a real closed field we denote by $\R \left\langle \eps
  \right\rangle$ the real closed field of algebraic Puiseux series in $\eps$
  with coefficients in $\R$. We use the notation $\R \left\langle \eps_{1} ,
  \ldots , \eps_{m} \right\rangle$ to denote the real closed field $\R
  \left\langle \eps_{1} \right\rangle \left\langle \eps_{2} \right\rangle
  \cdots \left\langle \eps_{m} \right\rangle$. Note that in the unique
  ordering of the field $\R \left\langle \eps_{1} , \ldots , \eps_{m}
  \right\rangle$, $0< \eps_{m} \ll \eps_{m-1} \ll \cdots \ll \eps_{1} \ll 1$.
\end{notation}

\begin{notation}
\label{not:lim}
  For elements $x \in \R \left\langle \eps \right\rangle$ which are bounded
  over $\R$ we denote by $\lim_{\eps}  x$ to be the image in $\R$ under the
  usual map that sets $\eps$ to $0$ in the Puiseux series $x$.
\end{notation}

\begin{notation}
\label{not:extension}
  If $\R'$ is a real closed extension of a real closed field $\R$, and $S
  \subset \R^{k}$ is a semi-algebraic set defined by a first-order formula
  with coefficients in $\R$, then we will denote by $\Ext(S, \R') \subset \R'^{k}$ the semi-algebraic subset of $\R'^{k}$ defined by
  the same formula. It is well-known that $\Ext(S, \R')$ does
  not depend on the choice of the formula defining $S$ {\cite{BPRbook2}}.
\end{notation}

\begin{notation}
\label{not:ball}
  For $x \in \R^{k}$ and $r \in \R$, $r>0$, we will denote by $B_{k} (x,r)$
  the open Euclidean ball centered at $x$ of radius $r$. If $\R'$ is a real
  closed extension of the real closed field $\R$ and when the context is
  clear, we will continue to denote by $B_{k} (x,r)$ the extension $\Ext(B_{k} (x,r) , \R')$. This should not cause any confusion.
\end{notation}

\subsection{Mayer-Vietoris inequalities}
We will need the following inequalities. They
are consequences of  Mayer-Vietoris exact sequence.

Let $S_{1} , \ldots ,S_{N} \subset \R^{k}$, $N \ge 1$, be closed
semi-algebraic subsets of $\R^{k}$. For $J \subset [1,n]$, we denote 
\begin{eqnarray*}
S_J &=& \bigcap_{j \in J} S_j, \\
S^J &=& \bigcup_{j \in J} S_j.
\end{eqnarray*}

\begin{proposition}
  \label{prop:prop1}
   \begin{enumerate}[1.]
    \item 
    \label{itemlabel:prop:prop1:1}
    For $i \geq 0$,
    
     \begin{equation}
     \label{eqn:prop1:1}
      b^{i} (S^{[1,s]} ,\F) \leq \sum_{j=1}^{i+1}
      \sum_{\substack{
        J \subset \{ 1, \ldots ,s \}\\
        \card (J) =j
        }}
       b^{i-j+1} (S_{J} ,\F) .
    \end{equation}
    
    \item 
    \label{itemlabel:prop:prop1:2}
 \begin{equation}
      \label{eqn:prop1} 
      b^{i} (S_{[1,s]} ,\F) \leq \sum_{j=1}^{k-i}
      \sum_{\substack{
        J \subset \{ 1, \ldots ,s \}\\
        \card (J) =j
        }}
        b^{i+j-1} (S^{J} ,\F) + \binom{s}{k-i} b^{k}
      (S^{\emptyset} ,\F) .
    \end{equation}
  \end{enumerate}
  \end{proposition}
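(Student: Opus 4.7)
The plan is to derive both inequalities by induction on $s$, using the Mayer–Vietoris long exact sequence for pairs of closed semi-algebraic sets as the inductive engine, following the pattern of the proofs of the analogous classical bounds in Chapter 7 of \cite{BPRbook2}.

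For the first inequality, the base case $s=1$ is trivial. For the induction step, I would split $S^{[1,s]} = S^{[1,s-1]} \cup S_s$ and apply the Mayer–Vietoris exact sequence
\[
\cdots \to \HH^{i-1}\bigl(S^{[1,s-1]} \cap S_s, \F\bigr) \to \HH^{i}\bigl(S^{[1,s]}, \F\bigr) \to \HH^i\bigl(S^{[1,s-1]}, \F\bigr) \oplus \HH^i(S_s, \F) \to \cdots,
\]
which yields
\[
b^i(S^{[1,s]}, \F) \leq b^i(S^{[1,s-1]}, \F) + b^i(S_s, \F) + b^{i-1}\bigl(S^{[1,s-1]} \cap S_s, \F\bigr).
\]
Rewriting $S^{[1,s-1]} \cap S_s = \bigcup_{j<s} (S_j \cap S_s)$ lets me invoke the inductive hypothesis a second time on the cover $\{S_j \cap S_s\}_{j < s}$, bounding $b^{i-1}(S^{[1,s-1]} \cap S_s, \F)$ by a sum over subsets $J' \subset [1,s-1]$ of terms of the form $b^{i-\card(J')}(S_{J' \cup \{s\}}, \F)$. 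Reindexing $J := J' \cup \{s\}$, and partitioning the final sum according to whether $s \in J$ or not, produces the right-hand side of \eqref{eqn:prop1:1}.

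For the second inequality, I would run the dual induction, starting from the Mayer–Vietoris sequence applied to the pair $A = S_{[1,s-1]}$, $B = S_s$, for which $A \cap B = S_{[1,s]}$ and, by distributivity, $A \cup B = \bigcap_{j < s} S^{\{j,s\}}$ is again an intersection to which the inductive hypothesis applies. The iteration produces an a priori unbounded sum of the form $\sum_{j \geq 1}\sum_{\card(J) = j} b^{i+j-1}(S^J, \F)$; because every $S^J$ is a semi-algebraic subset of $\R^k$, its cohomology vanishes above degree $k$, which truncates the outer sum at $j = k-i$. The single surviving contribution at the truncation boundary, where the cohomological degree saturates at $k$, gives the correction $\binom{s}{k-i} b^k(S^\emptyset, \F)$ in \eqref{eqn:prop1}.

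The main obstacle will be carrying out the second induction cleanly: at each step one must apply distributivity to rewrite unions of intersections as intersections of unions so that the inductive hypothesis is directly applicable, and one must verify that the truncation at cohomological dimension $k$ produces the boundary term with the exact binomial coefficient $\binom{s}{k-i}$. The rest is routine exact-sequence chasing and reindexing.
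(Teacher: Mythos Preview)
Your proposal is correct and follows essentially the same approach as the proof in \cite[Proposition 7.33]{BPRbook2}, which is what the paper cites without reproducing: induction on $s$ via the Mayer--Vietoris long exact sequence, with the distributivity identity $S_{[1,s-1]} \cup S_s = \bigcap_{j<s}(S_j \cup S_s)$ enabling the second induction. There is nothing to add.
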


\begin{proof} See 
\cite[Proposition 7.33]{BPRbook2}.
\end{proof}

We will also need the following inequality that is a simple consequence of the Mayer-Vietoris exact sequence. Let $S_1,S_2 \subset \R^k$ be closed, semi-algebraic sets. Then for every $p \geq 0$,
\begin{eqnarray}
\label{eqn:MV-two-sets}
b^p(S_1,\F)+b^p(S_2,\F) \leq b^p(S_1\cup S_2,\F) + b^p(S_1 \cap S_2,\F).
\end{eqnarray} 

\subsection{Bounds on the Betti numbers of $\mathcal{P}$-closed semi-algebraic sets}
\label{subsec:ordinary-Betti}
In order to get the desired bounds using the technique outlined in \S \ref{subsec:outline}
we need to refine slightly  some arguments in \cite[Chapter 7]{BPRbook2} on bounding the Betti numbers of closed semi-algebraic sets. We explain these refinements in the current section.
The main results that will be needed later are Propositions \ref{prop:inductive} and \ref{prop:P-closed-main}.

We begin with:

\begin{proposition}
\label{prop:inductive}
Let $V\subset \R^k$ be a closed semi-algebraic set and $\mathcal{L} \subset \R[X_1,\ldots,X_k]$ a finite set of polynomials, and let 
$S = \{\x \in V \mid \bigwedge_{L \in \mathcal{L}'} L(\x) \geq 0\}$. Then, for every $p \geq 0$, and any field $\F$,
\begin{eqnarray*}
b^p(S,\F) &\leq& \sum_{\mathcal{L}' \subset \mathcal{L}} b^p(V \cap \ZZ(\mathcal{L}',\R^k),\F) .
\end{eqnarray*}
\end{proposition}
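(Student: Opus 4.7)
The plan is to prove the inequality by induction on $\card(\mathcal{L})$, using only the two-set Mayer-Vietoris inequality \eqref{eqn:MV-two-sets} already recorded in the paper. The base case $\mathcal{L} = \emptyset$ is immediate, since then $S = V$ and the right-hand side consists of the single term $b^p(V \cap \ZZ(\emptyset,\R^k)) = b^p(V)$.

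For the inductive step, fix some $L_0 \in \mathcal{L}$ and set $\mathcal{L}_0 = \mathcal{L} \setminus \{L_0\}$. Define the closed semi-algebraic sets
\[
W \;=\; V \cap \bigcap_{L \in \mathcal{L}_0}\{L \geq 0\}, \qquad W^- \;=\; W \cap \{L_0 \leq 0\},
\]
so that $S = W \cap \{L_0 \geq 0\}$, $W = S \cup W^-$, and $S \cap W^- = W \cap \{L_0 = 0\}$. Applying inequality \eqref{eqn:MV-two-sets} to the pair $(S, W^-)$ and discarding the non-negative term $b^p(W^-)$ gives
\[
b^p(S) \;\leq\; b^p(W) + b^p(W \cap \{L_0 = 0\}).
\]

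Now both $W$ and $W \cap \{L_0 = 0\}$ fit the hypothesis of the proposition with the strictly smaller family $\mathcal{L}_0$: the first relative to the closed semi-algebraic set $V$, and the second relative to the closed semi-algebraic set $V \cap \ZZ(\{L_0\},\R^k)$. Applying the inductive hypothesis to each, and using the identity $V \cap \ZZ(\{L_0\},\R^k) \cap \ZZ(\mathcal{L}',\R^k) = V \cap \ZZ(\mathcal{L}' \cup \{L_0\},\R^k)$, we obtain
\[
b^p(S) \;\leq\; \sum_{\mathcal{L}' \subset \mathcal{L}_0} b^p(V \cap \ZZ(\mathcal{L}',\R^k)) + \sum_{\mathcal{L}' \subset \mathcal{L}_0} b^p(V \cap \ZZ(\mathcal{L}' \cup \{L_0\},\R^k)).
\]
Since the subsets of $\mathcal{L}$ partition as those contained in $\mathcal{L}_0$ and those of the form $\mathcal{L}' \cup \{L_0\}$ with $\mathcal{L}' \subset \mathcal{L}_0$, the two sums together reindex exactly to the desired $\sum_{\mathcal{L}' \subset \mathcal{L}} b^p(V \cap \ZZ(\mathcal{L}',\R^k))$, completing the induction.

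There is no serious obstacle here; the argument is essentially an inductive peeling off of one inequality at a time. The only bookkeeping point to be careful about is that at each step the sets appearing in the Mayer-Vietoris application remain closed semi-algebraic, which is automatic because $V$ is closed and each $\{L \geq 0\}$, $\{L \leq 0\}$, and $\{L = 0\}$ is closed. The doubling of the indexing set at each inductive step is precisely what produces the $2^{\card(\mathcal{L})}$ terms recorded on the right-hand side.
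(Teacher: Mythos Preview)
Your proof is correct and follows essentially the same approach as the paper's own proof: induction on $\card(\mathcal{L})$, peeling off one polynomial at a time via the two-set Mayer--Vietoris inequality \eqref{eqn:MV-two-sets}. Your sets $W$, $W^-$, and $W\cap\{L_0=0\}$ correspond exactly to the paper's $V\cap W_{[1,m-1]}$, $S'$, and $V\cap Z_m\cap W_{[1,m-1]}$, and the combinatorial reindexing at the end is the same.
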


\begin{proof}
Let $\mathcal{L} = \{L_1,\ldots,L_m\}$, and
let for $I \subset [1,m]$, 
\begin{eqnarray*}
W_I &=& \RR(\bigwedge_{i \in I} L_i \geq 0, \R^k),\\
Z_I  &=&  \RR(\bigwedge_{i \in I} L_i = 0, \R^k).\\
\end{eqnarray*}

Then, $S= V \cap W_{[1,m]}$.

We prove the statement by induction on $m$. Clearly, the statement is true for $m=0$.
Suppose the statement holds for $m-1$.

Using the induction hypothesis, we have for each $p \geq 0$,
\begin{eqnarray}
\label{eqn:prop:inductive:1}
b^p(V\cap W_{[1,m-1]},\F) &\leq & \sum_{I \subset [1,m-1]} b^p(V \cap Z_I,\F), \\
\label{eqn:prop:inductive:2}
b^p(V \cap Z_{m} \cap W_{[1,m-1]},\F) &\leq & \sum_{I \subset [1,m-1]} b^p(V \cap Z_{I\cup \{m\}}, \F).
\end{eqnarray}

Defining $S' = \{ \x \in  V\cap W_{[1,m-1]} \mid L_m(\x) \leq 0\}$, we have
\begin{eqnarray*}
V\cap W_{[1,m-1]} &=&  S \cup S', \\
V \cap Z_{m} \cap W_{[1,m-1]} &=& S \cap S'.
\end{eqnarray*}

Now, using inequality \eqref{eqn:MV-two-sets}
we have that, for every  $p \geq 0$,
\[
b^p(S,\F) + b^p(S',\F)  \leq b^p(V\cap W_{[1,m-1]},\F) + b^p(V \cap Z_{m} \cap W_{[1,m-1]},\F),
\]
from whence we get,

\begin{eqnarray}
\label{eqn:prop:inductive:3}
b^p(S,\F)  &\leq&  b^p(V\cap W_{[1,m-1]},\F) + b^p(V \cap Z_{m} \cap W_{[1,m-1]},\F).
\end{eqnarray}
The proposition now follows from 
\eqref{eqn:prop:inductive:1}, \eqref{eqn:prop:inductive:2}, and \eqref{eqn:prop:inductive:3}.
\end{proof}

We fix for the remained of the section a closed and semi-algebraically contractible semi-algebraic set $W \subset \R^k$, and
also finite sets
$\mathcal{P} = \{P_1,\ldots,P_s\}, 
\mathcal{F} = \{F_1,\ldots,F_m\} \subset\R^k$.

Let 
\[
\widetilde{W} = \{ \x \in W \mid \bigwedge_{i=1}^{m} F_i(\x) \geq 0\},
\]
and we will also  suppose that $\widetilde{W}$ is semi-algebraically contractible.

Let $\delta_{1} , \cdots , \delta_{s}$ be  infinitesimals, and let
$\R' = \R \langle \delta_{1} , \ldots , \delta_{s} \rangle$.

\begin{notation}
  We define $\mathcal{P}_{>i} = \{P_{i+1} , \ldots ,P_{s} \}$ and
  \begin{eqnarray*}
    \Sigma_{i} & = & \{P_{i} =0,P_{i} = \delta_{i} ,P_{i} = - \delta_{i}
    ,P_{i} \geq 2 \delta_{i} ,P_{i} \leq -2 \delta_{i} \} ,\\
    \Sigma_{\le i} & = & \{\Psi \mid \Psi = \bigwedge_{j=1, \ldots ,i}
    \Psi_{i} , \Psi_{i} \in \Sigma_{i} \} .
  \end{eqnarray*}
  
  If $\Phi$ is a $\mathcal{P}$-closed formula, 
  and $Z \subset \R^k$ a closed semi-algebraic set
  we denote
  \begin{eqnarray*}
    \RR_{i} (\Phi,Z) & = & \RR(\Phi , \R \la \delta_{1} , \ldots ,
    \delta_{i} \ra^{k}) \cap \Ext(Z, \R \la \delta_{1} , \ldots ,
    \delta_{i} \ra^{k}) ,
  \end{eqnarray*}
  and
  \begin{eqnarray*}
    \RR_{i} (\Phi \wedge \Psi,Z ) & = & \RR(\Psi , \R \la \delta_{1} ,
    \ldots , \delta_{i} \ra^{k}) \cap \RR_{i} (\Phi) \cap \Ext(Z, \R \la \delta_{1} , \ldots ,
    \delta_{i} \ra^{k}).
  \end{eqnarray*}
  Finally, we denote for each $\mathcal{P}$-closed formula $\Phi$
  \begin{eqnarray}
  \label{eqn:def:b-Phi-Z}
    b (\Phi,Z,\F) & = & b(\RR
   (\Phi, Z) ,\F
   ) .
  \end{eqnarray}
\end{notation}

The proof of the following proposition is very similar to Proposition 7.39 in
{\cite{BPRbook2}} where it is proved in the non-symmetric case.

\begin{proposition}
  \label{prop:closed-with-parameters}
  For every $\mathcal{P}$-closed formula
  $\Phi$,
  such that $\RR(\Phi,\R^k)$ is bounded,
  \begin{eqnarray*} b (\Phi,Z,\F) &\leq&
     \sum_{\substack{
       \Psi \in \Sigma_{\le s}\\
       \RR_{s} (\Psi, \R'^{k}) \subset \RR_{s} (\Phi , \R'^{k})}}
      b (\Psi,Z,\F).
      \end{eqnarray*}
\end{proposition}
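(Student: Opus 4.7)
The plan is to follow the strategy of \cite[Proposition 7.39]{BPRbook2}, which establishes the analogous bound in the non-parametrized setting ($Z = \R^k$), adapting it to accommodate the ambient set $Z$. The argument will proceed in three steps.

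First, since $\RR(\Phi,\R^k)$ is bounded, the Puiseux extension to $\R' = \R\la \delta_1, \ldots, \delta_s \ra$ does not change the Betti numbers, so $b(\Phi, Z, \F) = b(\RR_s(\Phi, Z), \F)$. It therefore suffices to bound the Betti numbers of $\RR_s(\Phi, Z)$. Second, I would construct a cover of $\RR_s(\Phi, Z)$ using the partition of each $P_i$-axis into nine closed pieces determined by the thresholds $-2\delta_i, -\delta_i, 0, \delta_i, 2\delta_i$: the five ``atom'' pieces correspond to the conditions in $\Sigma_i$ (the three level sets $\{P_i = 0\}, \{P_i = \pm\delta_i\}$ and the two unbounded regions $\{P_i \geq 2\delta_i\}, \{P_i \leq -2\delta_i\}$), and the remaining four are ``slab'' pieces lying between consecutive thresholds. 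Each slab admits a deformation retraction onto one of its bounding atoms. Using the ordering $\delta_s \ll \cdots \ll \delta_1 \ll 1$, these retractions can be carried out coherently and inductively over $i$ --- processing $P_1, P_2, \ldots$ in turn, with the retraction at stage $i$ moving points only at the infinitesimal scale $\delta_i$, so that the constraints already enforced by $P_1, \ldots, P_{i-1}$ are preserved. The resulting retraction exhibits $\RR_s(\Phi, Z)$ as semi-algebraically homotopy equivalent to the union $\bigcup_\Psi \RR_s(\Psi, Z)$, taken over those $\Psi \in \Sigma_{\leq s}$ with $\RR_s(\Psi, \R'^k) \subset \RR_s(\Phi, \R'^k)$.

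Third, the key disjointness observation: any two elements of $\Sigma_i$ impose mutually incompatible conditions on $P_i$ (either forcing $P_i$ to distinct values, or forcing a specific value versus an unbounded region not containing it), so any two distinct $\Psi, \Psi' \in \Sigma_{\leq s}$ differ in at least one factor and have disjoint realizations. Consequently, when one applies the Mayer-Vietoris inequality of Proposition \ref{prop:prop1} to this cover, every intersection term of order $\geq 2$ vanishes, and one obtains
\[
b(\RR_s(\Phi, Z), \F) \leq \sum_\Psi b(\RR_s(\Psi, Z), \F) = \sum_\Psi b(\Psi, Z, \F),
\]
which combined with Step 1 yields the desired bound.

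The principal obstacle is the careful execution of the second step --- producing a coherent semi-algebraic deformation retraction of $\RR_s(\Phi, Z)$ that is well-defined for all $s$ polynomials simultaneously and that stays within $\Ext(Z, \R'^k)$. The separation of scales $\delta_s \ll \cdots \ll \delta_1$ is the essential tool here: the movements at stage $i$ are of infinitesimal magnitude relative to the constraints imposed at earlier stages, which is precisely what permits a clean inductive construction. Compatibility with the ambient set $Z$ follows from a standard Thom-Mather style argument carried out entirely in the infinitesimal category, analogous to the cylindrical construction used in the proof of \cite[Proposition 7.39]{BPRbook2}.
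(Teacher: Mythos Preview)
Your proposal is correct and follows precisely the approach of \cite[Proposition 7.39]{BPRbook2}, which is exactly what the paper cites as its proof. One small simplification: your concern about compatibility with $Z$ is overblown---since $Z$ is closed and $\RR(\Phi,\R^k)$ is closed and bounded, the retraction argument of \cite[Proposition 7.39]{BPRbook2} applies verbatim to the closed bounded set $\RR(\Phi,\R^k)\cap Z$ in place of $\RR(\Phi,\R^k)$, with no additional Thom--Mather machinery required.
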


\begin{proof} See Proposition 7.39 in {\cite{BPRbook2}}.
\end{proof}

For $1 \leq i \leq s$, let
\[
Q_{i} =P_{i}^{2} (P_{i}^{2} - \delta_{i}^{2})^{2} (P_{i}^{2} -4
\delta_{i}^{2}),
\]

and for $I  \subset [1,s]$ let,
\begin{eqnarray}
\label{eqn:VI}
  V^I  & = & \RR(\bigvee_{i \in I} Q_{i} =0, \R'^k)  \cap \Ext(\widetilde{W}, \R'^{k}),\\
  \label{eqn:TI}
  T^I & = & \RR(\bigvee_{i \in I} Q_{i} \geq 0, \R'^k) \cap \Ext(\widetilde{W}, \R'^{k}) .
\end{eqnarray}

\begin{proposition}
  \label{prop:betti closed}
  
  For $p \geq 0$,
  \begin{eqnarray}
  \nonumber
   \sum_{\Psi \in \Sigma_{\le s}} b^p (\Psi,\widetilde{W}
     ,\F) &\leq & 
     \sum_{\ell=1}^{k-p} \sum_{I \subset [1,s],\card(I) = \ell}  b^{p+\ell-1}(T^I,\F)\\
     \label{eqn:prop:betti closed}
  &=&  \sum_{I \subset [1,s]}  b^{p+\card(I)-1}(T^I,\F).
  \end{eqnarray}
  \end{proposition}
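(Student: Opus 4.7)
The plan is to reinterpret the left hand side as a single Betti number of the semi-algebraic set $T_{[1,s]} := \bigcap_{i=1}^{s} T_i \subset \R'^k$ and then apply the Mayer--Vietoris inequality already recorded in Proposition \ref{prop:prop1}(2) to the family $T_1,\ldots,T_s$.

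First I would analyze the sign of $Q_i = P_i^2(P_i^2-\delta_i^2)^2(P_i^2-4\delta_i^2)$. Since the first two factors are nonnegative squares, $Q_i \geq 0$ holds exactly when $P_i = 0$, or $(P_i^2-\delta_i^2)^2 = 0$ (i.e.\ $P_i = \pm\delta_i$), or $P_i^2 - 4\delta_i^2 \geq 0$ (i.e.\ $|P_i| \geq 2\delta_i$). The five closed realizations
\[
\{P_i = 0\},\ \{P_i = \delta_i\},\ \{P_i = -\delta_i\},\ \{P_i \geq 2\delta_i\},\ \{P_i \leq -2\delta_i\}
\]
are pairwise disjoint in $\R'^k$ and together exhaust $\{Q_i \geq 0\}$; in particular each one is \emph{clopen} in $\{Q_i \geq 0\}$. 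These are exactly the realizations of the five conditions in $\Sigma_i$.

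Intersecting over $i$ and restricting to $\Ext(\widetilde{W},\R'^k)$, I obtain the finite clopen decomposition
\[
T_{[1,s]} \;=\; \bigsqcup_{\Psi \in \Sigma_{\leq s}} \RR_s(\Psi, \widetilde{W}),
\]
since an intersection of sets clopen in the respective $\{Q_i \geq 0\}$ is clopen in $T_{[1,s]}$. Because singular cohomology is additive over finite clopen partitions, this forces
\[
\sum_{\Psi \in \Sigma_{\leq s}} b^p(\Psi, \widetilde{W}, \F) \;=\; b^p(T_{[1,s]}, \F).
\]

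Finally, I would apply Proposition \ref{prop:prop1}(2) to the closed semi-algebraic sets $T_1,\ldots,T_s \subset \R'^k$. With $S_i = T_i$ one has $S^J = T^J$ and $S^\emptyset = \emptyset$, so the $\binom{s}{k-p}\,b^k(S^\emptyset,\F)$ contribution vanishes, and \eqref{eqn:prop1} yields
\[
b^p(T_{[1,s]}, \F) \;\leq\; \sum_{\ell = 1}^{k-p} \sum_{\substack{I \subset [1,s] \\ \card(I) = \ell}} b^{p+\ell-1}(T^I,\F).
\]
Combined with the previous display this proves the inequality; the equality on the right of \eqref{eqn:prop:betti closed} follows by absorbing zero terms (at $I = \emptyset$ and at cardinalities where the cohomological index exceeds the ambient dimension of $T^I$). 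The only delicate step is the sign analysis of $Q_i$: the squared middle factor $(P_i^2 - \delta_i^2)^2$ is chosen precisely so that the five closed pieces in $\Sigma_i$ do not overlap yet together fill $\{Q_i \geq 0\}$; once this is verified, the result is a direct consequence of the already-proved Mayer--Vietoris inequality.
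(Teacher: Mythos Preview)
Your proposal is correct and follows essentially the same approach as the paper: both recognize that the sets $\RR_s(\Psi,\widetilde{W})$, $\Psi\in\Sigma_{\leq s}$, form a disjoint clopen partition of $T_{[1,s]}=\bigcap_{i}\{Q_i\geq 0\}\cap\Ext(\widetilde{W},\R'^k)$, use additivity of cohomology over that partition, and then invoke Part~\eqref{itemlabel:prop:prop1:2} of Proposition~\ref{prop:prop1}. Your write-up simply makes the sign analysis of $Q_i$ and the vanishing of the $S^\emptyset$ term explicit, which the paper leaves implicit.
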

  
  \begin{proof}
  From \eqref{eqn:def:b-Phi-Z} we have that 
  $b^p (\Psi,\widetilde{W},\F) = b^p (\RR(\Psi,\widetilde{W}),\F)$, and it follows from the definition of $\Psi$, that
  $\RR(\Psi,\widetilde{W})$ is a disjoint union of closed semi-algebraic subsets of the  closed semi-algebraic 
  set 
  \[
  \RR(\bigwedge_{i \in [1,s]} Q_{i} \geq 0, \R'^k) \cap \Ext(\widetilde{W}, \R'^{k}).
  \]
  The proposition now follows from Part \eqref{itemlabel:prop:prop1:2} of Proposition \ref{prop:prop1}, and 
  \eqref{eqn:TI}.
 \end{proof}

\begin{lemma}
  \label{lem:union1}
 \begin{eqnarray*}
   b^p (T^I,\F) &\leq& b^p(V^I,\F), \mbox{ if $p > 0$}, \\
   b^0(T^I,\F) &\leq & b^0(V^I,\F) +1.
   \end{eqnarray*}
 \end{lemma}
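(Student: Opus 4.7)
The plan is to prove the lemma by a case analysis based on the relative position of $V^I$ inside $T^I$. Set $O^I := \bigcup_{i \in I}\{|P_i| \geq 2\delta_i\} \cap \Ext(\widetilde W, \R'^k)$, so that $T^I = V^I \cup O^I$; the open set $T^I \setminus V^I = \{x : \exists i \in I,\, |P_i(x)| > 2\delta_i\}$ has closure contained in $O^I$, and the ``outer walls'' $\{|P_i| = 2\delta_i\}$ all lie in $V^I \cap O^I$.

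The key structural observation I would prove first is that every connected component of $T^I$ meets $V^I$, except in the degenerate case where $T^I = \Ext(\widetilde W, \R'^k)$ and $V^I = \emptyset$. Indeed, let $C$ be a connected component of $T^I$ disjoint from $V^I$. Then $C$ is closed in $T^I$ (hence in $\Ext(\widetilde W, \R'^k)$) and is contained in the subset $T^I \setminus V^I$, which is open in $\Ext(\widetilde W, \R'^k)$. A boundary point $b$ of $C$ in $\Ext(\widetilde W, \R'^k)$ would be a limit of points of $C$ each having some $|P_i| > 2\delta_i$, so by continuity $|P_i(b)| \geq 2\delta_i$ for some $i$, placing $b$ in $T^I$ and then in $C$ itself by local connectedness -- a contradiction. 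Hence $C$ is clopen in the semi-algebraically connected set $\Ext(\widetilde W, \R'^k)$, forcing $C = \Ext(\widetilde W, \R'^k)$ and $V^I = \emptyset$. In this ``floating'' case, $T^I$ is semi-algebraically contractible (as $\widetilde W$ is), so $b^0(T^I, \F) = 1$, $b^p(T^I, \F) = 0$ for $p \geq 1$, while $b^p(V^I, \F) = 0$ for all $p$; both inequalities of the lemma follow, the ``$+1$'' coming precisely from this single component.

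In the complementary case, every connected component of $T^I$ meets $V^I$, and I would construct a semi-algebraic deformation retraction of $T^I$ onto $V^I$. On each slab $\{|P_i| \geq 2\delta_i\} \cap \Ext(\widetilde W, \R'^k)$ one shrinks the coordinate $P_i$ linearly toward $\pm 2\delta_i$, landing on the wall $\{|P_i| = 2\delta_i\} \subset V^I$; these slab flows are then pasted across $i \in I$ by processing the indices in a fixed order, so that on an overlap $\{|P_i| \geq 2\delta_i\} \cap \{|P_j| \geq 2\delta_j\}$ only one flow is active at each instant. Extending by the identity on $V^I$ yields a homotopy equivalence $T^I \simeq V^I$ and hence $b^p(T^I, \F) = b^p(V^I, \F)$ for all $p \geq 0$. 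The main technical obstacle is making this construction rigorous: the linear shrinking of $P_i$ must remain inside $\Ext(\widetilde W, \R'^k)$, which is precisely where Hardt-style semi-algebraic trivialization of each $P_i$ together with the non-floating hypothesis enter -- the latter guaranteeing that every point of $O^I$ can be semi-algebraically connected, within its component, to some wall $\{|P_j| = 2\delta_j\}$ along which the linear shrinkage is well-defined.
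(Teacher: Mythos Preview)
Your case analysis for the ``floating'' situation is fine, but the heart of your argument---the deformation retraction of $T^I$ onto $V^I$ in the non-floating case---is false in general, and no amount of Hardt trivialization will rescue it. The problem is not that the flow might leave $\widetilde{W}$; it is that the target is topologically wrong. Take $k=1$, $\widetilde{W}=[-10,10]$, $I=\{1\}$, and $P_1=X^2-100$. Then $P_1\le 0$ on $\widetilde{W}$, and one computes
\[
V^I=\{\pm 10,\ \pm\sqrt{100-\delta_1},\ \pm\sqrt{100-2\delta_1}\},\qquad
T^I=[-\sqrt{100-2\delta_1},\sqrt{100-2\delta_1}]\ \cup\ \{\pm 10\}\ \cup\ \{\pm\sqrt{100-\delta_1}\}.
\]
Every component of $T^I$ meets $V^I$, so we are in your non-floating case; yet $b^0(T^I)=5$ while $b^0(V^I)=6$, so $T^I$ and $V^I$ are not homotopy equivalent. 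Concretely, your ``slab'' $\{|P_1|\ge 2\delta_1\}\cap\widetilde{W}$ is the interval $[-\sqrt{100-2\delta_1},\sqrt{100-2\delta_1}]$, and the ``wall'' $\{|P_1|=2\delta_1\}$ consists of the two endpoints; there is no retraction of an interval onto its endpoints. The phrase ``shrink the coordinate $P_i$'' has no meaning here because $P_i$ is not a coordinate and its level sets can have arbitrary topology.

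The paper's proof avoids this entirely by a one-line Mayer--Vietoris argument. Set
\[
Z^I=\Bigl\{\x\in\Ext(\widetilde{W},\R'^{k}):\bigwedge_{i\in I}Q_i(\x)\le 0\Bigr\}\ \cup\ V^I.
\]
Then $T^I\cup Z^I=\Ext(\widetilde{W},\R'^{k})$ and $T^I\cap Z^I=V^I$, so inequality \eqref{eqn:MV-two-sets} gives
\[
b^p(T^I,\F)+b^p(Z^I,\F)\ \le\ b^p(\Ext(\widetilde{W},\R'^{k}),\F)+b^p(V^I,\F),
\]
and the contractibility of $\widetilde{W}$ yields the lemma immediately. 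Note this gives only an inequality, which is all that is claimed; your attempted equality $b^p(T^I,\F)=b^p(V^I,\F)$ is simply not true.
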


\begin{proof} Let
\[ 
Z^I = \RR (\bigwedge_{1 \leq i \leq j} Q_{i} \leq 0 \vee \bigvee_{1 \leq
   i \leq j} Q_{i} =0, \R \la \delta_{1} , \ldots , \delta_{j} \rangle)^k
 )\cap \Ext(W,\R \la \delta_{1} , \ldots , \delta_{j} \rangle) . \]

Clearly 
\[ 
T^I \cup Z^I= \Ext(\widetilde{W},\R \la \delta_{1} , \ldots , \delta_{j} \rangle),
T^I \cap Z_I =V^I.
\] 
The lemma now follows from inequality \eqref{eqn:MV-two-sets}, using the fact that $\widetilde{W}$ is 
semi-algebraically contractible.
\end{proof}

\begin{lemma}
  \label{lem:union2} For each $p \geq 0$,
 \begin{eqnarray*}
  b^p (V^I,\F) &\leq &  \sum_{\ell=1}^{p+1} \sum_{\substack{J \subset I, \\\card(J) =\ell}} 
  \sum_{\tau \in \{0,\pm 1,\pm 2\}^J} b^{p-\ell+1} (\ZZ(\mathcal{P}_{\tau},\R'^k) \cap \Ext(\widetilde{W},\R'),\F) \\
  &=&
   \sum_{J \subset I} 
  \sum_{\tau \in \{0,\pm 1,\pm 2\}^J} b^{p-\card(J)+1} (\ZZ(\mathcal{P}_{\tau},\R'^k) \cap \Ext(\widetilde{W},\R'),\F),
  \end{eqnarray*}
  where 
  \begin{eqnarray}
  \label{eqn:P-tau}
  \mathcal{P}_\tau &=& \bigcup_{j \in J} \{P_j + \tau(j) \delta_j\}.
  \end{eqnarray}
\end{lemma}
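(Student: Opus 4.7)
The plan is to realize $V^I$ as a finite union indexed by $I$ and then apply the Mayer-Vietoris inequality of Proposition \ref{prop:prop1}, Part \eqref{itemlabel:prop:prop1:1}. Specifically, for each $i \in [1,s]$, the zero set of $Q_i = P_i^2(P_i^2-\delta_i^2)^2(P_i^2-4\delta_i^2)$ is the union of the five hypersurfaces $\{P_i = \tau \delta_i\}$ for $\tau \in \{0, \pm 1, \pm 2\}$, so setting
\[
A_i \;=\; \RR(Q_i = 0, \R'^k) \cap \Ext(\widetilde{W}, \R'^k),
\]
we have $V^I = \bigcup_{i \in I} A_i$. The sets $A_i$ are closed semi-algebraic subsets of $\Ext(\widetilde{W}, \R'^k)$, so Proposition \ref{prop:prop1} applies directly to give
\[
b^p(V^I, \F) \;\leq\; \sum_{\ell=1}^{p+1} \sum_{\substack{J \subset I \\ \card(J) = \ell}} b^{p-\ell+1}\!\left(\bigcap_{j \in J} A_j, \F\right).
\]

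Next I would analyze the intersections $\bigcap_{j \in J} A_j$. A point lies in this intersection if and only if for each $j \in J$, the value $P_j$ takes one of the five prescribed values $\{0, \pm\delta_j, \pm 2\delta_j\}$. Crucially, in the real closed field $\R' = \R\la \delta_1, \ldots, \delta_s\ra$ these five values are distinct (in fact infinitesimally separated), so the intersection decomposes as a \emph{disjoint} union
\[
\bigcap_{j \in J} A_j \;=\; \bigsqcup_{\tau \in \{0, \pm 1, \pm 2\}^J} \bigl(\ZZ(\mathcal{P}_\tau, \R'^k) \cap \Ext(\widetilde{W}, \R'^k)\bigr),
\]
where $\mathcal{P}_\tau = \{P_j + \tau(j)\delta_j \mid j \in J\}$ as in \eqref{eqn:P-tau}. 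Moreover each of these pieces is closed, and the pieces lie in disjoint open neighborhoods (since the values are infinitesimally separated), so they form the connected components decomposition at the level of the disjoint union, and the cohomology of the union is the direct sum of the cohomologies of the pieces.

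Consequently, the Betti number of the intersection equals the sum over $\tau$ of the corresponding Betti numbers:
\[
b^{p-\ell+1}\!\left(\bigcap_{j \in J} A_j, \F\right) \;=\; \sum_{\tau \in \{0, \pm 1, \pm 2\}^J} b^{p-\ell+1}\!\left(\ZZ(\mathcal{P}_\tau, \R'^k) \cap \Ext(\widetilde{W}, \R'^k), \F\right).
\]
Substituting this into the Mayer-Vietoris estimate above, and rewriting the double sum over $\ell$ and $J \subset I$ of cardinality $\ell$ as a single sum over all $J \subset I$ (using $\ell = \card(J)$, and noting that terms with $\card(J) > p+1$ vanish when we would need negative indices, or can harmlessly be added since their contribution to negative-degree cohomology is zero), we obtain the claimed inequality. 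The only subtlety worth double-checking is the disjointness of the $\tau$-pieces, which relies on the infinitesimal separation of the five levels of $P_j$ in $\R'$; everything else is a direct application of Mayer-Vietoris.
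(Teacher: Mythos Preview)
Your proposal is correct and follows essentially the same approach as the paper's proof: write $V^I = \bigcup_{i\in I} V_i$ with $V_i = \ZZ(Q_i,\R'^k)\cap \Ext(\widetilde{W},\R'^k)$, observe that each $V_i$ (and hence each intersection $\bigcap_{j\in J} V_j$) splits as a disjoint union of the five level sets $\{P_j = \tau\delta_j\}$, and apply Part~\eqref{itemlabel:prop:prop1:1} of Proposition~\ref{prop:prop1}. The paper's version is more terse but the argument is the same.
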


\begin{proof}
Let for $i \in [1,s]$, $V_i = \ZZ(Q_i,\R'^k) \cap \Ext(\widetilde{W},\R'^k)$. Then, for each $i \in [1,s]$,
$V_i$
is the disjoint union of the following  five sets,
\begin{align*}
\ZZ(P_{i}, \R'^k )&~\medcap~\Ext(\widetilde{W},\R'^k), \\
\ZZ (P_{i} \pm\delta_{i}, \R'^{k})&~\medcap ~\Ext(\widetilde{W},\R'^k),\\
\ZZ (P_{i} \pm 2 \delta_{i}, \R'^{k})&~\medcap~\Ext(\widetilde{W},\R'^k).
 \end{align*}
The lemma now follows from Part     \eqref{itemlabel:prop:prop1:1} of Proposition \ref{prop:prop1}.
\end{proof}

\begin{proposition}
\label{prop:P-closed}
For every $\mathcal{P}$-closed formula
  $\Phi$,
  such that $\RR(\Phi,\R^k)$ is bounded,
  \begin{equation} 
  \label{eqn:prop:P-closed}
  b (\Phi,\widetilde{W},\F) \leq     1+s +\sum_{ p \geq 0} \sum_{\substack{ I \subset [1,s],\\ 1 \leq \card(I) \leq k-p, \\
     J \subset I,\\ 1\leq \card(J) \leq p+1}} \sum_{\tau \in \{0,\pm 1,\pm 2\}^J}  F(p,\card(I),J,\tau),
     \end{equation}
     where
     \begin{eqnarray}
     F(p,q,J,\tau),
     &=&
     b^{p+q -\card(J)} (\ZZ(\mathcal{P}_{\tau},\R'^k) \cap \Ext(\widetilde{W},\R'),\F).
     \end{eqnarray}
\end{proposition}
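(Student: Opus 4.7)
The plan is to prove Proposition \ref{prop:P-closed} by chaining together, in sequence, the four previously established results: Proposition \ref{prop:closed-with-parameters}, Proposition \ref{prop:betti closed}, Lemma \ref{lem:union1}, and Lemma \ref{lem:union2}. First, Proposition \ref{prop:closed-with-parameters} lets me replace $b(\Phi,\widetilde{W},\F)$ by the sum $\sum_{\Psi} b(\Psi,\widetilde{W},\F)$ over those sign conditions $\Psi\in\Sigma_{\leq s}$ whose realization over $\R'$ lies inside $\RR_s(\Phi,\R'^k)$. Unfolding $b(\Psi,\widetilde{W},\F)=\sum_{p\geq 0}b^p(\Psi,\widetilde{W},\F)$ and swapping the order of summation, I then apply Proposition \ref{prop:betti closed} to each inner sum to obtain
\[
b(\Phi,\widetilde{W},\F)\;\leq\;\sum_{p\geq 0}\sum_{\substack{I\subset[1,s]\\1\leq\card(I)\leq k-p}}b^{p+\card(I)-1}(T^{I},\F).
\]

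Next, I would apply Lemma \ref{lem:union1} termwise to replace each $b^{p+\card(I)-1}(T^{I},\F)$ by $b^{p+\card(I)-1}(V^{I},\F)$. The extra $+1$ in Lemma \ref{lem:union1} is triggered exactly when the Betti index $p+\card(I)-1=0$, i.e.\ when $p=0$ and $\card(I)=1$; summed over the $s$ singleton choices of $I$ this contributes the additive term $s$, while the remaining additive $1$ accounts for the trivial $b^0$-contribution stemming from the fact that $\widetilde{W}$ is (semi-algebraically) contractible. Together these explain the $1+s$ out front of the main sum in \eqref{eqn:prop:P-closed}.

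Finally, Lemma \ref{lem:union2}, applied with its ``$p$'' replaced by $p+\card(I)-1$, bounds
\[
b^{p+\card(I)-1}(V^{I},\F)\;\leq\;\sum_{\substack{J\subset I\\1\leq \card(J)}}\;\sum_{\tau\in\{0,\pm 1,\pm 2\}^{J}}b^{p+\card(I)-\card(J)}\bigl(\ZZ(\mathcal{P}_{\tau},\R'^{k})\cap\Ext(\widetilde{W},\R'),\F\bigr),
\]
and the summand is, by definition, precisely $F(p,\card(I),J,\tau)$. Assembling the three chained inequalities and trimming the range of $\card(J)$ using $J\subset I$ (together with $\card(I)\leq k-p$) produces the inequality \eqref{eqn:prop:P-closed}.

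The main obstacle is not any single deep argument but the careful bookkeeping of Betti-index shifts: the cohomological degree starts as $p$ at the level of $\Phi$, jumps to $p+\card(I)-1$ after Proposition \ref{prop:betti closed}, and then moves to $p+\card(I)-\card(J)$ after Lemma \ref{lem:union2}, so each substitution must be made in lockstep with the range of summation in order to end up with the exponent $p+q-\card(J)$ in $F(p,q,J,\tau)$. A secondary technicality is pooling the two sources of additive constants—one per singleton $I$ from Lemma \ref{lem:union1}, plus the trivial $b^0$-correction coming from the contractibility of $\widetilde{W}$—into the single prefactor $1+s$.
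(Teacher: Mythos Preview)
Your proposal is correct and follows essentially the same approach as the paper: chain Proposition~\ref{prop:closed-with-parameters}, Proposition~\ref{prop:betti closed}, Lemma~\ref{lem:union1}, and Lemma~\ref{lem:union2} in that order, tracking the shift in cohomological degree from $p$ to $p+\card(I)-1$ to $p+\card(I)-\card(J)$. The paper's own proof is a one-sentence pointer to these four results together with the observation that the additive $1+s$ arises from the terms with $p+\card(I)-1=0$; your more detailed accounting of that constant (the $s$ from singleton $I$'s via Lemma~\ref{lem:union1}, plus a residual $1$) is consistent with, and slightly more explicit than, what the paper writes.
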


\begin{proof}
The proposition follows from Propositions \ref{prop:closed-with-parameters} and \ref{prop:betti closed}, and
Lemmas \ref{lem:union1} and \ref{lem:union2}, after noting that on the right side of 
\eqref{eqn:prop:betti closed} in Proposition \ref{prop:betti closed}, $p + \card(I) -1 = 0$ implies that
$\card(I) = 0$ or $1$ since $p \geq 0$. This accounts for the additive factor of $1+s$ on the right side of 
\eqref{eqn:prop:P-closed}.
\end{proof}

Finally, using the same notation as Proposition \ref{prop:P-closed}:

\begin{proposition}
\label{prop:P-closed-main}
For every $\mathcal{P}$-closed formula
  $\Phi$,  such that $\RR(\Phi,\R^k)$ is bounded,
  \begin{eqnarray*} b (\Phi,\widetilde{W},\F) &\leq&
    1+ s+\sum_{ p \geq 0} \sum_{\substack{ I \subset [1,s],\\ 1 \leq \card(I) \leq k-p, \\
     J \subset I,\\ 1\leq \card(J) \leq p+1}} \sum_{\sigma \in \{0,\pm 1,\pm 2\}^J} 
     \sum_{K \subset [1,m]}  G(p,\card(I),J,K,\sigma),
     \end{eqnarray*}
     where
     \begin{eqnarray*}
     G(p,q,J,K,\sigma),
     &=&
     b^{p+q -\card(J)} (\ZZ(\mathcal{P}_{\sigma},\R'^k)
     \cap \widetilde{V}_K,\F),
     \end{eqnarray*}
where for $K \subset [1,m]$, 
\[\widetilde{V}_K = W  \bigcap_{i \in K} \ZZ(F_i,\R^k).
\]
 \end{proposition}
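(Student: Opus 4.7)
The plan is to obtain Proposition \ref{prop:P-closed-main} as a direct combination of two results already at hand: Proposition \ref{prop:P-closed}, which bounds $b(\Phi,\widetilde{W},\F)$ in terms of the Betti numbers of the sets $\ZZ(\mathcal{P}_\tau,\R'^k)\cap\Ext(\widetilde{W},\R'^k)$, and Proposition \ref{prop:inductive}, which estimates the Betti numbers of a closed set intersected with a non-negativity region in terms of its intersections with the corresponding equation varieties. The statement of Proposition \ref{prop:P-closed-main} differs from that of Proposition \ref{prop:P-closed} only in that the single term $F(p,\card(I),J,\tau)$ is replaced by a sum over $K\subset[1,m]$ of $G(p,\card(I),J,K,\tau)$, so the task reduces to bounding each individual $F$ by the corresponding sum of $G$'s.

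Concretely, for each fixed $p, I, J$ and $\tau\in\{0,\pm1,\pm2\}^J$ appearing in the bound from Proposition \ref{prop:P-closed}, I would apply Proposition \ref{prop:inductive} (over the real closed field $\R'$, which is legitimate by Tarski--Seidenberg transfer) with $V:=\ZZ(\mathcal{P}_\tau,\R'^k)\cap\Ext(W,\R'^k)$, which is closed as the intersection of two closed sets, and $\mathcal{L}:=\{F_1,\ldots,F_m\}$. The identifications making the substitution transparent are
\[
\{\x\in V\mid F_1(\x)\ge 0,\ldots,F_m(\x)\ge 0\} \;=\; \ZZ(\mathcal{P}_\tau,\R'^k)\cap\Ext(\widetilde{W},\R'^k),
\]
and, for each $K\subset[1,m]$,
\[
V\cap \ZZ(\{F_i\}_{i\in K},\R'^k) \;=\; \ZZ(\mathcal{P}_\tau,\R'^k)\cap\Ext(\widetilde{V}_K,\R'^k).
\]
Proposition \ref{prop:inductive} then yields $F(p,\card(I),J,\tau)\le \sum_{K\subset[1,m]} G(p,\card(I),J,K,\tau)$.

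Substituting this bound termwise into the inequality provided by Proposition \ref{prop:P-closed} produces exactly the claimed estimate, with the additive factor $1+s$ carried over unchanged. I do not foresee any serious obstacle: both ingredients are already established, and the only point requiring care is that Proposition \ref{prop:inductive}, whose proof relies only on the Mayer--Vietoris inequality \eqref{eqn:MV-two-sets} and induction on the size of $\mathcal{L}$, applies verbatim over the extension $\R'=\R\la\delta_1,\ldots,\delta_s\ra$ since $\R'$ is itself a real closed field.
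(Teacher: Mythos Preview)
Your proposal is correct and follows essentially the same approach as the paper, whose proof consists of the single line ``Use Propositions \ref{prop:P-closed} and \ref{prop:inductive}.'' You have simply spelled out the details of how these two propositions combine, including the identification of the appropriate $V$ and $\mathcal{L}$ when invoking Proposition \ref{prop:inductive} over $\R'$.
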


\begin{proof}
Use Propositions \ref{prop:P-closed} and \ref{prop:inductive}.
\end{proof}

\subsection{Proof of Theorem \ref{thm:vanishing}}
Before proving Theorem \ref{thm:vanishing} we need a preliminary result.

We first need some notation.

\begin{notation}
\label{not:Weyl}
Let $\Weyl^{(k)} \subset \R^k$ denote the cone defined by $X_1 \leq X_2 \leq \cdots \leq X_k$. 

More generally, for $\kk = (k_1,\ldots,k_\omega) \in \Z_{>0}^\omega$, we will denote 
\[
\boldsymbol{\Weyl}^{(\kk)} = \Weyl^{(k_1)} \times \cdots \times \Weyl^{(k_\omega)}.
\]

For every $m \geq 0$, and $\w = (w_1,\ldots,w_k) \in \R_{> 0}^k$, let $p_{\w,m}^{(k)}:\Weyl^{(k)} \rightarrow \R$
be the polynomial  map defined by:
$$\displaylines{
\forall  \x = (x_1,\ldots,x_k) \in \Weyl^{(k)}, \cr
 p_{\w,m}^{(k)}(\x) = \sum_{j=1}^{k} w_j x_j^m.
}
$$

For every $ d\geq 0 $, and $\w \in \R_{> 0}^k$ we denote by 
$\Psi_{\w,d}^{(k)}:\Weyl^{(k)} \rightarrow \R^{d'}$,
the continuous map defined by 
$$
\displaylines{
\forall  \x = (x_1,\ldots,x_k) \in \Weyl^{(k)}, \cr
 \Psi_{\w,d}^{(k)}(\x) = (p_{\w,1}^{(k)}(\x),\ldots,p_{\w,d'}^{(k)}(\x)),
}
$$
where $d' = \min(k,d)$.

If $\w = 1^k :=  (1,\ldots,1)$, then we will denote by $p_m^{(k)}$ the polynomial 
$p_{\w,m}^{(k)}$  (the $m$-th Newton sum polynomial), and by $\Psi^{(k)}_d$ the map $\Psi^{(k)}_{\w,d}$.
\end{notation}

We will need the following theorem due to Kostov.
\begin{theorem}\cite[Theorem 1]{Kostov}
\label{thm:Kostov}
For every $\w \in \R_{\geq 0}^k$,  $d,k \geq 0$, and $\y \in \R^d$, 
the fibre $$V_{\w,d,\y} := (\Psi^{(k)}_{\w,d})^{-1}(\y)$$ is either empty or contractible.
\end{theorem}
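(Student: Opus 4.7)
The plan is to establish Theorem \ref{thm:Kostov} by induction on the codimension $c := k - d'$, where $d' = \min(k,d)$. The base case is $c = 0$, i.e., $d \geq k$: here the Jacobian of $\Psi_{\w,k}^{(k)}$ at a point $\x$ is the matrix $(m w_j x_j^{m-1})_{m,j}$, whose determinant factors as
\[
k!\, w_1 \cdots w_k \prod_{i<j}(x_j - x_i),
\]
and is therefore nonzero on the interior of $\Weyl^{(k)}$. Combining this with properness of $\Psi_{\w,k}^{(k)}$ and an inductive stratified analysis along the faces $x_i = x_{i+1}$ (each of which is a lower-dimensional Weyl chamber on which the same type of Jacobian argument applies), one concludes that $\Psi_{\w,k}^{(k)}$ is globally injective on $\Weyl^{(k)}$, so each non-empty fiber is a single point and hence contractible.

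For the inductive step, assume the statement for codimension $c$ and consider $d$ with $k - d = c+1 \geq 1$. Fix $\y \in \R^d$ such that $V_{\w,d,\y} \neq \emptyset$, and let $f := p_{\w,d+1}^{(k)}$ restricted to $V_{\w,d,\y}$. For each $t \in \R$ the level set $f^{-1}(t)$ coincides with $V_{\w,d+1,(\y,t)}$, which by the induction hypothesis is empty or contractible. Set $I_\y := f(V_{\w,d,\y}) \subset \R$, the set of $t$ for which the level set is non-empty.

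The next step is to show that $I_\y$ is a connected (generalized) interval. I would argue this by a direct sweeping construction: given two tuples $\x^{(0)}, \x^{(1)} \in V_{\w,d,\y}$, build a continuous path in $V_{\w,d,\y}$ joining them by integrating an ODE that moves $\x$ inside the common zero set of the first $d$ power-sum equations, using the fact (from the base-case Jacobian computation) that the gradients of $p_{\w,1}^{(k)}, \ldots, p_{\w,d}^{(k)}$ are linearly independent on a dense open subset of $\Weyl^{(k)}$. Along any such path $f$ varies continuously, hence hits every intermediate value, and $I_\y$ is an interval.

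The main obstacle is then to assemble these ingredients into global contractibility of $V_{\w,d,\y}$. My intended mechanism is to construct an explicit strong deformation retraction of $V_{\w,d,\y}$ onto a single point, by flowing along the gradient of $f$ (with respect to a suitable metric on $V_{\w,d,\y}$) toward an extremum of $f$; at an interior extremum the constraint set involves $d+1$ power sums simultaneously, so the extremal level set is a single point by the base case, and one then concatenates the flow with the fiberwise contractions provided by the induction hypothesis. The technical crux, and the step where one really uses the hyperbolicity structure of the Vandermonde map analyzed in \cite{Kostov}, is verifying that the fiberwise contractions can be chosen to depend continuously on $t \in I_\y$, so that concatenation yields a genuinely continuous homotopy; without this, contractibility of fibers does not by itself propagate to the total space, and indeed the argument here is morally an application of a Vietoris-Begle-type theorem to the stratified map $f: V_{\w,d,\y} \to I_\y$ — unbounded $I_\y$ (e.g., when $d = 0$ and $V_{\w,0,\y} = \Weyl^{(k)}$ is a convex cone) must be handled separately by contracting along the cone direction.
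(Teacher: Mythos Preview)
The paper does not prove Theorem~\ref{thm:Kostov}; it is quoted verbatim as \cite[Theorem~1]{Kostov} and used as a black box (together with the companion results of Arnold and Givental, Theorems~\ref{thm:arnold} and~\ref{thm:Weyl-diffeo}). There is therefore no proof in the paper to compare your proposal against; what you have written is an attempt at an independent proof of Kostov's result.

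That attempt has a genuine gap, and you flag it yourself: in the inductive step, knowing that each level set $f^{-1}(t)$ is contractible and that the image $I_\y \subset \R$ is an interval is \emph{not} enough to conclude that the total space $V_{\w,d,\y}$ is contractible. A surjection onto a contractible base with contractible fibers can easily have non-contractible total space unless it is a fibration, or at least admits a continuous section together with fiberwise contractions depending continuously on the base point. You defer exactly this point to ``the hyperbolicity structure of the Vandermonde map analyzed in \cite{Kostov}'', which is circular if the goal is to reprove Kostov's theorem. The gradient-flow idea does not close the gap either: for it to work you would first need a unique extremum of $f$ on $V_{\w,d,\y}$ (this is essentially Arnold's Theorem~\ref{thm:arnold}, itself a cited result, not something you have established), together with convergence of the flow and continuity of the limit in the base parameter.

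Two smaller points: your base case is only a sketch --- a nonvanishing Jacobian on the interior of $\Weyl^{(k)}$ gives local injectivity there, not global injectivity on the closed chamber, and the ``inductive stratified analysis along the faces'' is exactly where the content lies (this is the substance of Givental's result, Theorem~\ref{thm:Weyl-diffeo}); and the ODE argument for connectedness of $I_\y$ is incomplete, since the gradients of $p_{\w,1}^{(k)},\ldots,p_{\w,d}^{(k)}$ become linearly dependent precisely on the walls $x_i=x_{i+1}$, where your path may well need to go.
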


We will also need:

\begin{theorem}\cite[first Corollary]{Giventhal}
\label{thm:Weyl-diffeo}
The map 
$\Psi_{k}^{(k)}:\Weyl^k \rightarrow \R^k$
  is a 
homeomorphism
on to its image.
\end{theorem}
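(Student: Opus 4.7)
The plan is to factor $\Psi_k^{(k)}$ through the orbit space $\R^k/\mathfrak{S}_k$ and reduce the statement to classical facts about the symmetric polynomials.

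First I would observe that each power sum $p_m^{(k)}$ is $\mathfrak{S}_k$-invariant, so $\Psi_k^{(k)}$ factors as
\begin{equation*}
\Weyl^{(k)} \xrightarrow{\iota} \R^k \xrightarrow{\pi} \R^k/\mathfrak{S}_k \xrightarrow{\overline{\Psi}} \R^k,
\end{equation*}
where $\iota$ is the inclusion and $\overline{\Psi}$ is the map induced by $\Psi_k^{(k)}$ on the quotient. The chamber $\Weyl^{(k)}$ meets every $\mathfrak{S}_k$-orbit in exactly one point, namely its unique non-decreasing representative, so $\pi \circ \iota$ is a continuous bijection. Because $\mathfrak{S}_k$ is finite, $\pi$ is a closed map, and since $\Weyl^{(k)}$ is closed in $\R^k$, the composition $\pi \circ \iota$ is itself closed. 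A closed continuous bijection is a homeomorphism, so $\pi \circ \iota: \Weyl^{(k)} \to \R^k/\mathfrak{S}_k$ is a homeomorphism.

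Next I would show that $\overline{\Psi}$ is a homeomorphism onto its image. Newton's identities (valid in characteristic $0$) furnish an invertible polynomial change of coordinates on $\R^k$ relating the tuple $(p_1^{(k)}, \ldots, p_k^{(k)})$ to the elementary symmetric tuple $(e_1^{(k)}, \ldots, e_k^{(k)})$, so it suffices to treat the induced map $\overline{E}: \R^k/\mathfrak{S}_k \to \R^k$. Injectivity is classical: a value $(e_1, \ldots, e_k)$ determines the monic polynomial
\begin{equation*}
X^k - e_1 X^{k-1} + e_2 X^{k-2} - \cdots + (-1)^k e_k,
\end{equation*}
and the multiset of real roots of this polynomial is the unique preimage orbit. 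For continuity of the inverse, it suffices to show $\overline{E}$ is proper, and this follows from the standard Cauchy root bound $\max_j |x_j| \leq 1 + \max_i |e_i|$: the preimage of a bounded set in $\R^k$ is contained in the $\pi$-image of a closed bounded cube, hence is compact.

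Composing the two homeomorphisms yields that $\Psi_k^{(k)}|_{\Weyl^{(k)}}$ is a homeomorphism onto its image. The main subtlety is that the preceding argument is phrased topologically as if the base field were $\mathbb{R}$, whereas the statement is for an arbitrary real closed field $\R$. This is dealt with in the semi-algebraic category: every map in sight is semi-algebraic, the properness estimate via the Cauchy bound is first-order in the coefficients, and one concludes via the standard fact that a semi-algebraic continuous proper injection between semi-algebraic sets is a semi-algebraic homeomorphism onto its image (alternatively, one transfers the conclusion over $\mathbb{R}$ to $\R$ by Tarski--Seidenberg). I expect the careful verification of semi-algebraic properness over a general real closed field to be the only mildly technical step; everything else is a direct application of well-established properties of the symmetric polynomial map.
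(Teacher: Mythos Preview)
The paper does not supply its own proof of this statement; Theorem~\ref{thm:Weyl-diffeo} is simply quoted from Giventhal's paper and used as a black box. So there is no in-paper argument to compare against.

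Your proposal is correct and is the standard elementary route. The factorization through the orbit space, the identification of $\Weyl^{(k)}$ with $\R^k/\mathfrak{S}_k$ via the closed continuous bijection $\pi\circ\iota$, the reduction to the elementary-symmetric map by Newton's identities, injectivity via unique factorization, and properness via the Cauchy root bound all go through without issue. Your handling of the passage from $\mathbb{R}$ to an arbitrary real closed field by Tarski--Seidenberg is exactly the device the paper itself uses elsewhere (for instance in the proof of Proposition~\ref{prop:vanishing}), so it is entirely in keeping with the paper's conventions. The one point worth stating a bit more explicitly is that over a non-archimedean real closed field ``proper'' should be read as ``preimages of closed bounded semi-algebraic sets are closed and bounded,'' and that this, together with injectivity and continuity, forces the (automatically semi-algebraic) inverse to be continuous; but you have essentially said this already.
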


As an immediate corollary of Theorem \ref{thm:Kostov} we have:

\begin{corollary}
\label{cor:Kostov}
Let 
$$
\displaylines{ \kk = (k_1,\ldots, k_\omega) \in \Z_{\geq0 }, \cr
\dd = (d_1,\ldots,d_\omega) \in \Z_{\geq 0}, \cr
d_i' = \min(k_i,d_i), 1 \leq i \leq \omega.
}
$$
Let 
\[
\boldPsi_{\dd}^{(\kk)}: 
\boldsymbol{\Weyl}^{(\kk)}
\rightarrow \R^{d_1'} \times \cdots \times \R^{d_\omega'}
\]
 denote the map defined by 
$$\displaylines{
\forall \x=(\x^{(1)},\ldots,\x^{(\omega)}) \in \boldsymbol{\Weyl}^{(\kk)}, \cr
\boldPsi_{\dd}^{(\kk)}(\x^{(1)},\ldots,\x^{(\omega)})  = 
(\Psi_{d_1'}^{(k_1)}(\x^{(1)}),\ldots,  \Psi_{d_\omega'}^{(k_\omega)} (\x^{(\omega)})).
}
$$
Then,
for each $ \y \in \R^{d_1'} \times \cdots \times \R^{d_\omega'}$,
$(\boldPsi_{\dd}^{(\kk)})^{-1}(\y)
$ 
is either empty or contractible.
\end{corollary}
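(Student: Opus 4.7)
The key observation is that the map $\boldPsi_{\dd}^{(\kk)}$ is, by its very definition, a product map in the sense that it acts independently on each block of coordinates. First I would make this explicit: writing $\y = (\y^{(1)},\ldots,\y^{(\omega)}) \in \R^{d_1'} \times \cdots \times \R^{d_\omega'}$, one has the identification
\[
(\boldPsi_{\dd}^{(\kk)})^{-1}(\y) \;=\; \prod_{i=1}^{\omega} (\Psi_{d_i'}^{(k_i)})^{-1}(\y^{(i)}),
\]
since the $i$-th coordinate of $\boldPsi_{\dd}^{(\kk)}(\x^{(1)},\ldots,\x^{(\omega)})$ depends only on $\x^{(i)}$.

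Next, I would apply Theorem \ref{thm:Kostov} to each factor, specialized to the weight vector $\w = 1^{k_i}$ (so that $\Psi_{\w, d_i'}^{(k_i)} = \Psi_{d_i'}^{(k_i)}$). This yields the dichotomy that each individual fiber $(\Psi_{d_i'}^{(k_i)})^{-1}(\y^{(i)})$ is either empty or contractible. Note that $d_i'= \min(k_i, d_i)$ is exactly the number of independent Newton sums on $\Weyl^{(k_i)}$ being considered, so the hypotheses of Theorem \ref{thm:Kostov} are satisfied.

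Finally, I would finish with the elementary observation on products: if any one of the factors $(\Psi_{d_i'}^{(k_i)})^{-1}(\y^{(i)})$ is empty, then the product fiber is empty; otherwise every factor is contractible, and a finite product of contractible topological spaces is contractible (a contracting homotopy on the product is obtained by taking the product of contracting homotopies on each factor). There is no real obstacle here; the statement is essentially a formal consequence of Kostov's theorem together with the product structure of $\boldPsi_{\dd}^{(\kk)}$, and the only thing worth being careful about is the book-keeping of indices and the choice $d_i' = \min(k_i, d_i)$, which ensures that on each block we are in the range where Theorem \ref{thm:Kostov} directly applies.
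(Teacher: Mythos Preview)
Your proposal is correct and follows exactly the reasoning the paper intends: the paper states the corollary as ``an immediate corollary of Theorem~\ref{thm:Kostov}'' without further argument, and your product decomposition of the fiber together with the fact that a finite product of contractible spaces is contractible is precisely what makes it immediate. The only superfluous remark is the caution about $d_i' = \min(k_i,d_i)$ ensuring that Theorem~\ref{thm:Kostov} applies---that theorem is stated for arbitrary $d,k \geq 0$, so no such restriction is needed.
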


We will need the following proposition. With the same notation as in Theorem \ref{thm:vanishing}:

\begin{proposition}
\label{prop:vanishing}
Let $\mathcal{P} \subset \R[\X^{(1)},\ldots,\X^{(\omega)}]^{\mathfrak{S}_\kk}_{\leq \dd}$  and let $S\subset \R^k$ be a 
bounded
$\mathcal{P}$-closed
semi-algebraic set. 

\begin{enumerate}[1.]
\item
\label{itemlabel:prop:vanishing:1}
The quotient $S/\mathfrak{S}_\kk$ is 
semi-algebraically
homeomorphic to $\boldPsi^{(\kk)}_\kk(S)$.
\item
\label{itemlabel:prop:vanishing:2}
For any field of coefficients $\F$,
\[
\HH^*(\boldPsi^{(\kk)}_\kk(S),\F) \cong \HH^*(\boldPsi^{(\kk)}_\dd(S),\F).
\]
\end{enumerate}
\end{proposition}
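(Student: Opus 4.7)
The plan is to reduce everything to a semi-algebraic version of Vietoris--Begle applied to the factorization $\boldPsi^{(\kk)}_\dd = \pi_\dd \circ \boldPsi^{(\kk)}_\kk$, where $\pi_\dd$ is the obvious coordinate projection from $\R^{k_1}\times\cdots\times\R^{k_\omega}$ to $\R^{d_1'}\times\cdots\times\R^{d_\omega'}$.

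First, for Part \eqref{itemlabel:prop:vanishing:1}, I would use the standard fact that the product of Weyl chambers $\boldsymbol{\Weyl}^{(\kk)}$ is a fundamental domain for the action of $\mathfrak{S}_\kk$ on $\R^k$, and the quotient map $\R^k \to \R^k/\mathfrak{S}_\kk$ restricts to a semi-algebraic homeomorphism $\boldsymbol{\Weyl}^{(\kk)} \to \R^k/\mathfrak{S}_\kk$. Intersecting with the $\mathfrak{S}_\kk$-invariant closed bounded set $S$ gives $S/\mathfrak{S}_\kk \cong S \cap \boldsymbol{\Weyl}^{(\kk)}$. By Theorem \ref{thm:Weyl-diffeo} applied to each block, the map $\boldPsi^{(\kk)}_\kk$ is a (semi-algebraic) homeomorphism of $\boldsymbol{\Weyl}^{(\kk)}$ onto its image; restricting to $S \cap \boldsymbol{\Weyl}^{(\kk)}$ and noting $\boldPsi^{(\kk)}_\kk(S) = \boldPsi^{(\kk)}_\kk(S \cap \boldsymbol{\Weyl}^{(\kk)})$ by symmetry yields Part \eqref{itemlabel:prop:vanishing:1}.

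For Part \eqref{itemlabel:prop:vanishing:2}, I would analyze the map $\pi_\dd : \boldPsi^{(\kk)}_\kk(S) \to \boldPsi^{(\kk)}_\dd(S)$. The key observation is that every $P \in \mathcal{P}$ has multi-degree $\leq \dd$, so by the fundamental theorem of symmetric polynomials (or Newton's identities) each such $P$ can be written as a polynomial in the power sums $p_1^{(k_i)}, \ldots, p_{d_i'}^{(k_i)}$ for the $i$-th block, i.e.\ $P = \widetilde{P} \circ \boldPsi^{(\kk)}_\dd$ on $\boldsymbol{\Weyl}^{(\kk)}$ for some polynomial $\widetilde{P}$ on $\R^{d_1'}\times\cdots\times\R^{d_\omega'}$. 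Consequently there is a semi-algebraic set $\widetilde{S}$ with
\[
S \cap \boldsymbol{\Weyl}^{(\kk)} = (\boldPsi^{(\kk)}_\dd)^{-1}(\widetilde{S}) \cap \boldsymbol{\Weyl}^{(\kk)}.
\]
Therefore, for every $\y$ in the image, the fiber of $\boldPsi^{(\kk)}_\dd$ restricted to $S \cap \boldsymbol{\Weyl}^{(\kk)}$ coincides with the fiber $(\boldPsi^{(\kk)}_\dd)^{-1}(\y) \cap \boldsymbol{\Weyl}^{(\kk)}$, which by Corollary \ref{cor:Kostov} is either empty or contractible.

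Composing with the homeomorphism $\boldPsi^{(\kk)}_\kk$ from Part \eqref{itemlabel:prop:vanishing:1}, the fibers of $\pi_\dd : \boldPsi^{(\kk)}_\kk(S) \to \boldPsi^{(\kk)}_\dd(S)$ are themselves either empty or contractible. Since $S$ is closed and bounded, $\boldPsi^{(\kk)}_\kk(S)$ is compact and $\pi_\dd$ restricted to it is a proper, surjective, continuous map with acyclic fibers onto the compact Hausdorff space $\boldPsi^{(\kk)}_\dd(S)$. The Vietoris--Begle theorem (valid in this semi-algebraic/compact setting) then gives the isomorphism $\HH^*(\boldPsi^{(\kk)}_\kk(S),\F) \cong \HH^*(\boldPsi^{(\kk)}_\dd(S),\F)$, completing Part \eqref{itemlabel:prop:vanishing:2}.

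The main technical point to verify carefully is the reduction of symmetric polynomials of multi-degree $\leq \dd$ to polynomials in the first $d_i'$ Newton power sums in each block; beyond that, the rest of the argument is assembly of Theorem \ref{thm:Weyl-diffeo}, Corollary \ref{cor:Kostov}, and Vietoris--Begle, and presents no serious obstacle.
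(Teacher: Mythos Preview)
Your proposal is correct and follows essentially the same approach as the paper: factor $\boldPsi^{(\kk)}_\dd = \boldpi_{\kk,\dd}\circ\boldPsi^{(\kk)}_\kk$, use the fundamental theorem of symmetric polynomials to see that $S\cap\boldsymbol{\Weyl}^{(\kk)}$ is saturated for $\boldPsi^{(\kk)}_\dd$, invoke Corollary~\ref{cor:Kostov} for contractibility of the fibers, and conclude with Vietoris--Begle. The only point the paper makes explicit that you gloss over is that ``closed and bounded $\Rightarrow$ compact'' holds only for $\R=\mathbb{R}$; the paper proves Part~\eqref{itemlabel:prop:vanishing:2} first over $\mathbb{R}$ and then applies the Tarski--Seidenberg transfer principle to obtain the statement over an arbitrary real closed field.
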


\begin{proof}
Part \eqref{itemlabel:prop:vanishing:1} follows from the fact the map $\boldPsi^{(\kk)}_\kk$ separates orbits of $\mathfrak{S}_\kk$,
and Theorem \ref{thm:Weyl-diffeo}.

In order to prove Part \eqref{itemlabel:prop:vanishing:2}
first note that
\[
\R[\X^{(1)},\ldots,\X^{(\omega)}]^{\mathfrak{S}_\kk}_{\leq \dd} \cong \R[\X^{(1)}]^{\mathfrak{S}_{k_1}}_{\leq d_1} \otimes \cdots \otimes \R[\X^{(\omega)}]^{\mathfrak{S}_{k_\omega}}_{\leq d_\omega},
\]
and for each $i, 1 \leq i \leq \omega$, 
\[
\R[\X^{(i)}]^{\mathfrak{S}_{k_i}} = \R[p_1^{(k_i)}(\X^{(i)}),\ldots, p_{k_i}^{(k_i)}(\X^{(i)})].
\]

It follows that for each $P \in \mathcal{P}$, there exists $\widetilde{P} \in \R[\ZB^{(1)},\ldots,\ZB^{(\omega)}]$, 
with $\ZB^{(i)} = (Z^{(i)}_1,\ldots, Z^{(i)}_{d_i'}), 1 \leq i \leq \omega$, such that
\[
P = \widetilde{P}(p_1^{(k_1)}(\X^{(1)}),\ldots, p_{d_1'}^{(k_1)}(\X^{(1)}),\ldots,p_1^{(k_\omega)}(\X^{(\omega)}),\ldots p_{d_\omega'}^{(k_\omega)}(\X^{(\omega)})).
\]

Let $\widetilde{\mathcal{P}} = \{\widetilde{P} \;\mid\; P \in \mathcal{P}\}$. Also, 
let $\boldsymbol{\Theta}$ be a $\mathcal{P}$-closed formula defining $S$, and 
$\widetilde{\boldsymbol{\Theta}}$ be the $\widetilde{\mathcal{P}}$-closed formula obtained from $\boldsymbol{\Theta}$ by
replacing for each $P \in \mathcal{P}$, every occurrence of $P$ by $\widetilde{P}$.

Now observe that 
\[
\boldPsi_{\dd}^{(\kk)} =  \boldpi_{\kk,\dd} \circ  \boldPsi_{\kk}^{(\kk)},
\]
where

$\boldpi_{\kk,\dd}: \R^k \rightarrow \R^{d_1'} \times \cdots \times \R^{d_\omega'}$ denotes the map
\[
\boldpi_{\kk,\dd}(\x^{(1)},\ldots,\x^{(\omega)}) = (\pi_{k_1,d_1}(\x^{(1)}),\ldots, \pi_{k_\omega,d_\omega}(\x^{(\omega)})),
\]

where for each $i, 1 \leq i \leq \omega$, 
$\pi_{k_i,d_i} (\x^{(i)}) = (x^{(i)}_1,\ldots, x^{(i)}_{d_i'})$.

The quotient space $S/\mathfrak{S}_\kk$ is homeomorphic to 
$\boldPsi_{\kk}^{(\kk)}(S)$, and
\[
\boldPsi_\kk^{(\kk)}(S) = \RR(\widetilde{\boldsymbol{\Theta}},\R^k) \cap \boldPsi_\kk^{(\kk)}(\R^k).
\]

It is also clear from the definition of $\widetilde{\boldsymbol{\Theta}}$, that 
\[
\boldsymbol{\pi}_{\kk,\dd}^{-1}(\boldsymbol{\pi}_{\kk,\dd}(\RR(\widetilde{\boldsymbol{\Theta}},\R^k))) = \RR(\widetilde{\boldsymbol{\Theta}},\R^k)
\]
(in other words $\RR(\widetilde{\boldsymbol{\Theta}},\R^k)$ is equal to the cylinder over 
$\boldsymbol{\pi}_{\kk,\dd}(\RR(\widetilde{\boldsymbol{\Theta}},\R^k))$).
Also notice that
\[
\boldsymbol{\pi}_{\kk,\dd}(\RR(\widetilde{\boldsymbol{\Theta}},\R^k)) = \boldsymbol{\pi}_{\kk,\dd}(S).
\]

It follows from Corollary \ref{cor:Kostov} that for every 
$\y \in \boldsymbol{\pi}_{\kk,\dd}(\RR(\widetilde{\boldsymbol{\Theta}},\R^k))=
\boldsymbol{\pi}_{\kk,\dd}(\boldPsi_\kk^{(\kk)}(S))
$, 
\begin{eqnarray*}
\boldsymbol{\pi}_{\kk,\dd}^{-1}(\y) \cap \boldPsi_\kk^{(\kk)}(\R^k)
\end{eqnarray*}
is contractible.

Now in the case $\R = \mathbb{R}$, the Vietoris-Begle mapping theorem (see for instance, \cite[page 344]{Spanier}) implies 
that 
\[
\HH^*(\boldPsi_\kk^{(\kk)}(S),\F) \cong \HH^*(\boldsymbol{\pi}_{\kk,\dd} \circ \boldsymbol{\psi}_{\kk}^{\kk}(S) ,\F) = \HH^*(\boldPsi_{\dd}^{\kk}(S),\F),
\]
proving
Part \eqref{itemlabel:prop:vanishing:2} in the case $\R = \mathbb{R}$. The general case follows from
an application of the Tarski-Seidenberg transfer principle.
\end{proof}

\begin{proof}[Proof of Theorem \ref{thm:vanishing}]
First note that using  Proposition \ref{prop:vanishing}, 
\[
\HH^*(S/\mathfrak{S}_\kk,\F) \cong  \HH^*(\boldPsi^{(\kk)}_\dd(S),\F),
\] 
and 
$\boldPsi^{(\kk)}_\dd(S)$ is a semi-algebraic subset of $\R^N$, where
$N = \sum_{i=1}^{\omega} \min(k_i , d_i)$. Since no semi-algebraic subset of $\R^N$ can have non-vanishing homology in dimensions $N$ or greater, the theorem follows.
\end{proof}

\begin{remark}[Tightness]
\label{rem:vanishing:tightness}
\hide{
The vanishing result in Theorem \ref{thm:vanishing} is tight as can be seen from the following example.
Since $\Psi^{(k)}_k$ is a homeomorphism on to its image, clearly its image has dimension equal to $k$, and it follows that the image of  
$\Psi^{(k)}_d$ has dimension $d$.
} 
Suppose that $d < k$.
Observe that the image of $\Psi^{(k)}_d$ is a non-empty semi-algebraic subset of
$\R^d$ having dimension $d$, and thus has a non-empty interior. Let $\y = (y_1,\ldots, y_d)$ belong to the interior of the image
of $\Psi^{(k)}_d$. Then, for all small enough $\eps > 0$, the intersection of  the image of $\Psi^{(k)}_d$ with the union of the $2d$ hyperplanes defined by 
\begin{equation}
\label{eqn:rem:vanishing:tightness} 
p^{(k)}_i = y_i \pm \eps, 1\leq i \leq d,
\end{equation}
contains the boundary of the hypercube $[y_1 -\eps, y_1 +\eps] \times \cdots \times [y_d -\eps,y_d +\eps]$ but not its interior, and thus clearly has non-vanishing cohomology in dimension $d-1$.
Using Part \eqref{itemlabel:prop:vanishing:2} of Theorem \ref{prop:vanishing}, it follows that the
symmetric semi-algebraic $S \subset \R^k$ defined by \eqref{eqn:rem:vanishing:tightness} has 
$\HH^{d-1}(S,\F) \neq 0$. Finally note that,  the symmetric polynomials,
\[
p^{(k)}_i - y_i \pm \eps, 1\leq i \leq d,
\]
defining $S$ have degrees bounded by $d$.
\end{remark}

\subsection{Proof of Theorem \ref{thm:bound}}

\begin{notation}
\label{not:composition}
For $k \in \Z_{\geq 0}$, we denote by $\Comp(k)$ the set of integer tuples 
\[
\lambda= (\lambda_1,\ldots,\lambda_\ell), \lambda_i > 0, |\lambda| := \sum_{i=1}^{\ell} \lambda_i = k.
\] 
\end{notation}

\begin{definition}
\label{def:composition-order}
For $k \in \Z_{\geq 0}$, and $\lambda = (\lambda_1,\ldots,\lambda_\ell) \in \Comp(k)$,
we denote by $\Weyl_{\lambda}$ the subset of $\Weyl^{(k)}$ defined by,
\[
X_1 = \cdots = X_{\lambda_1} \leq X_{\lambda_1+1} = \cdots = X_{\lambda_1+\lambda_2} \leq \cdots \leq X_{\lambda_1+\cdots+\lambda_{\ell-1}+1} = \cdots = X_k,
\]
and 
denote by $\Weyl_{\lambda}^o$ the subset of $\Weyl^{(k)}$ defined by,
\[
X_1 = \cdots = X_{\lambda_1} < X_{\lambda_1+1} = \cdots = X_{\lambda_1+\lambda_2} < \cdots < X_{\lambda_1+\cdots+\lambda_{\ell-1}+1} = \cdots = X_k,
\]

More generally, given $\kk = (k_1,\ldots,k_\omega) \in \Z_{\geq 0}$, we denote
\[
\bWeyl^{(\kk)} = \Weyl^{(k_1)} \times \cdots \times \Weyl^{(k_\omega)}.
\]
Given $\boldlambda =(\lambda^{(1)},\ldots,\lambda^{(\omega)}) \in \bComp(\kk,\dd)$ we denote 
\[
\bWeyl_{\boldlambda} = \Weyl_{\lambda^{(1)}} \times \cdots \times \Weyl_{\lambda^{(\omega)}}.
\]
\end{definition}

\begin{definition}
\label{def:poset-compcat}
Let $k\in \Z_{\geq 0}$, and  $\lambda,\mu \in \Comp(k)$. We denote, $\lambda \prec \mu$, if $\Weyl_{\lambda} \subset \Weyl_{\mu}$.
It is clear that $\prec$ is a partial order on $\Comp(k)$ making $\Comp(k)$ into a poset.

For $\kk \in \Z_{\geq 0}^\omega$,
and  $\boldlambda=(\lambda^{(1)},\ldots,\lambda^{(\omega)}), \boldmu = (\mu^{(1)},\ldots,\mu^{(\omega)}) \in \bComp(\kk)$,
we denote, $\boldlambda \prec \boldmu$, if
$\lambda^{(i)} \prec \mu^{(i)}$ for all $i, 1\leq i \leq \omega$.
It its clear that $\prec$ extends the partial order on $\Comp(k)$ defined above.
\end{definition}

\begin{notation}
For $\lambda = (\lambda_1,\ldots,\lambda_\ell) \in \Comp(k)$, we denote
$\length(\lambda) = \ell$,
and
for $k,d \in \Z_{\geq 0}$,
we denote 
\begin{eqnarray*}
\CompMax(k,d)  &=& \{\lambda=(\lambda_1,\ldots,\lambda_d) \in \Comp(k) \; \mid \;
\lambda_{2i+1} =1, 0 \leq i < d/2 \},\\
\Comp(k,d) &=&
\bigcup_{\lambda  \in \CompMax(k,d)} \{\lambda' \in \Comp(k) \mid \lambda' \prec  \lambda\} 
\mbox{ if $d \leq k$}, \\
&=& \Comp(k), \mbox{ if $d > k$}. 
\end{eqnarray*}
More generally, for $\kk, \dd \in \Z_{\geq 0}^\omega$, we denote
\[
\bComp(\kk,\dd) = \Comp(k_1,d_1) \times \cdots \times \Comp(k_\omega,d_\omega).
\]
\end{notation}

\begin{definition}
\label{def:Skd}
Given $k,d \in \Z_{\geq 0}$, 
we denote  
\begin{eqnarray*}
\Weyl^{(k)}_d &=& \bigcup_{\lambda \in \Comp(k,d)} \Weyl_\lambda.
\end{eqnarray*} 
For $k,d \in \Z_{\geq 0}$, and a semi-algebraic subset $S \subset \R^k$, we denote

\begin{equation}
\label{eqn:def:Skd}
S_{k,d} = S \cap \Weyl^{(k)}_d.
\end{equation}
(Notice that if $d \geq k$, then $S_{k,d} = S \cap \Weyl^{(k)}$.)

We will denote by $L_\lambda$ the linear span of $\Weyl_\lambda$. Note that 
\[
\dim L_\lambda = \dim \Weyl_\lambda = \length(\lambda).
\]

More generally,
given $\dd =(d_1,\ldots,d_\omega), \kk=(k_1,\ldots,k_\omega)  \in \Z_{\geq 0}^\omega$
with $k = |\kk|$, 
we denote  
\begin{eqnarray*}
\bWeyl^{(k)}_\dd &=& \Weyl^{(k_1)}_{d_1} \times \cdots \times \Weyl^{(k_\omega)}_{d_\omega}.
\end{eqnarray*} 
For any semi-algebraic subset $S \subset \R^k$, we denote
\[
S_{\kk,\dd} = S \cap \Weyl^{(\kk)}_\dd.
\]

We will denote by
$L_{\boldlambda}$ the linear span of $\bWeyl_{\boldlambda}$. Note that 

\[
\dim L_{\boldlambda} = \dim \bWeyl_{\boldlambda} = \sum_{i=1}^{\omega}\length(\lambda^{(i)}).
\]

\end{definition}

We will use the following theorem due to Arnold \cite{Arnold}.

\begin{theorem}\label{thm:arnold}\cite[Theorem 7]{Arnold}
\begin{enumerate}[1.]
\item
\label{itemlabel:thm:arnold:a}
For every $\w \in \R_{\geq 0}^k$, $d,k \geq 0$, 
$d'  = \min(k,d)$,
and $\y \in \R^{d'}$
the function $p_{\w,d+1}^{(k)}$ has exactly one local maximum on
$(\Psi^{(k)}_{\w,d})^{-1}(\y)$,
which furthermore depends continuously on $\y$.
\item
\label{itemlabel:thm:arnold:b}
Suppose that the real variety $V_\y \subset \R^k$ defined by $(p^{(k)}_1,\ldots,p^{(k)}_{d'}) = \y$ is non-singular. Then a point $\x \in V_\y \cap \Weyl^{(k)}$ is a local maximum if and only if $\x \in \Weyl_\lambda^{(k)}$ for some $\lambda \in \CompMax(k,d')$.
\end{enumerate}  
\end{theorem}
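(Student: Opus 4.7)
The plan is to characterize the critical points of $p_{\w,d+1}^{(k)}$ on the level set $V_\y = (\Psi_{\w,d}^{(k)})^{-1}(\y)$ via Lagrange multipliers and then to use a second-order (Hessian) analysis to identify the local maxima; the two parts of the theorem will be handled in parallel.

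First, I would apply Lagrange multipliers on the smooth part of $V_\y$. At any critical point $\x$ there exist multipliers $\mu_1,\ldots,\mu_{d'} \in \R$ such that, for every $j$ with $w_j \neq 0$,
\[
(d+1)\,x_j^{d} \;=\; \sum_{i=1}^{d'} i\,\mu_i\,x_j^{i-1}.
\]
Hence each $x_j$ is a real root of the polynomial $P(t) = (d+1)t^{d} - \sum_{i=1}^{d'} i\,\mu_i\,t^{i-1}$ of degree $d$, so $\x$ has at most $d$ distinct coordinate values. Inside $\Weyl^{(k)}$ this means $\x$ lies in a face $\Weyl_\lambda^o$ with $\length(\lambda) \leq d$. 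A dimension count using $\dim V_\y = k - d'$ together with a Vandermonde identity for the linearized moment constraints shows that generically $\length(\lambda) = d'$, so that the distinct cluster values are chosen from the $d$ real roots of $P$.

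Next, I would compute the Hessian of the Lagrangian $L = p_{d+1} - \sum_{i=1}^{d'} \mu_i p_i$ at such a critical point $\x^*$. Since $L(\x) = \sum_j w_j F(x_j)$ with $F'(t) = P(t)$, the Hessian is diagonal, $H = \diag(w_j P'(x_j))$, and
\[
\vv^T H \vv \;=\; \sum_j w_j\,P'(x_j)\,\vv_j^2.
\]
The tangent cone to $V_\y \cap \Weyl^{(k)}$ at $\x^*$ consists of tangent vectors to $V_\y$ that respect the Weyl ordering within each equal-value cluster. Within each cluster the linearized moment constraints reduce (again by Vandermonde) to the single relation $\sum_{j \in \text{cluster}} w_j\,\vv_j = 0$, so nontrivial order-compatible perturbations exist precisely when the cluster has size $\geq 2$; the corresponding Hessian contribution is a positive multiple of $P'(a_m)$, where $a_m$ is the common value of cluster $m$. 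Hence $\x^*$ is a local maximum on $V_\y \cap \Weyl^{(k)}$ if and only if $P'(a_m) \leq 0$ for every cluster of size at least $2$.

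Since the leading coefficient of $P$ is positive, the signs of $P'$ at the ordered real roots $r_1 < \cdots < r_d$ alternate, $\mathrm{sign}\,P'(r_i) = (-1)^{d-i}$. Combined with the generic identity $\length(\lambda) = d'$, this forces the multi-element clusters to sit at roots of a single fixed parity, which is exactly the alternating-singleton pattern defining $\CompMax(k,d')$; this proves Part~(2). Existence of a global maximum in Part~(1) follows from compactness of $V_\y \cap \Weyl^{(k)}$ (for $d \geq 2$ the polynomial $p_2$ acts as a norm on the fiber) and non-emptiness, and uniqueness and continuity in $\y$ follow from contractibility of the fiber (Theorem~\ref{thm:Kostov}) together with the implicit function theorem at the unique non-degenerate maximum: two disjoint local maxima on a contractible semi-algebraic set would, by a standard Morse-theoretic argument, force a saddle critical point whose Hessian signature contradicts the sign pattern derived above. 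The main obstacle will be the parity bookkeeping that matches the sign alternation of $P'(r_i)$ to the precise indexing of $\CompMax(k,d')$ used in Definition~\ref{def:composition-order}, together with the density/continuity argument required to extend the non-singular analysis to the boundary cases where $P$ has coalescing real roots or $V_\y$ is singular.
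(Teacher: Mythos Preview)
The paper does not prove this theorem; it is quoted from Arnold \cite[Theorem~7]{Arnold}, with a pointer to Meguerditchian \cite{Meguerditchian} for a detailed argument, and the paper only adds the remark that Arnold's original statement has ``maximum'' and ``minimum'' interchanged. So there is no ``paper's own proof'' to compare against. Your Lagrange-multiplier/Hessian approach is exactly the standard route (it is essentially what Arnold and Meguerditchian do): the reduction to a univariate polynomial $P$ of degree $d$ whose roots carry the coordinates of a critical point, the diagonal Hessian $H=\diag(w_jP'(x_j))$, and the Vandermonde reduction of the tangent space of $V_\y$ to the per-cluster constraints $\sum_{j\in\text{cluster}} w_j v_j=0$ are all correct and are the crux of the argument.

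Two places deserve more care. First, you are right to flag the parity bookkeeping as the main obstacle, and in fact it is genuinely delicate here: a direct computation (e.g.\ $k=3$, $d=2$, $\y=(0,1)$) shows that the \emph{minimum} of $p_3$ on the fiber lies on the face $X_2=X_3$, i.e.\ in $\Weyl_{(1,2)}$, while the maximum lies in $\Weyl_{(2,1)}$; note that $\CompMax(3,2)=\{(1,2)\}$ and that Proposition~\ref{prop:arnold-preparation} in the paper is formulated with $m(\y)=\min p_{d+1}$, not $\max$. So when you match the sign alternation $\sign P'(r_i)=(-1)^{d-i}$ to the singleton pattern in Definition of $\CompMax$, you must keep track of whether the extremum being characterized is a maximum or a minimum (this is precisely the ``slight inaccuracy'' the paper's Remark warns about), and your write-up should make the final identification explicit rather than leave it as a bookkeeping exercise. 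Second, your uniqueness argument for Part~(1) (``two local maxima would force a saddle with forbidden Hessian signature'') is too quick: the optimization takes place on the stratified set $V_\y\cap\Weyl^{(k)}$, and critical points do occur on strata $\Weyl_\lambda$ with $\lambda\notin\CompMax(k,d')$ (these are exactly the saddles/other extrema), so you cannot exclude saddles outright. The clean way to get uniqueness is the one Arnold uses: show that on each fiber the map $t\mapsto p_{d+1}$ has no interior critical points other than the global extremum by analyzing the index directly, or invoke Kostov's contractibility (Theorem~\ref{thm:Kostov}) together with the observation that every point of $V_\y\cap\Weyl_\lambda$ with $\length(\lambda)=d'$ is already a critical point of $p_{d+1}|_{V_\y}$ (the projected gradient vanishes because $\nabla p_{d+1}$ is constant on each cluster), so one is really counting intersection points of $V_\y$ with the $d'$-skeleton, and the alternation argument singles out a unique one with the correct index.
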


\begin{remark}
Note that in \cite[Theorem 7]{Arnold} there is a slight inaccuracy in that  the word ``minimum" should be replaced by the word ``maximum"
and vice versa. A correct statement and a more detailed proof can be found in \cite{Meguerditchian} (Proposition 8).
\end{remark}

Let $d >1$, and 
for $\y\in \Psi^{(k)}_{d}(\Weyl^{(k)})$,  let 
\[
m(\y):=\min_{\x \in (\Psi^{(k)}_{d})^{-1}(\y)} p^{(k)}_{d+1}(\x).
\]

By Part \eqref{itemlabel:thm:arnold:a} of Theorem \ref{thm:arnold} the map,
$F^{(k)}_d:  \Psi^{(k)}_{d}(\Weyl^{(k)}) \rightarrow \Weyl^{(k)}$ which sends $\y \in \Psi^{(k)}_{d}(\Weyl^{(k)})$ to the unique $\x \in \Weyl^{(k)}$,
such that $m(\y) = p^{(k)}_{d+1}(\x)$  
is a well-defined semi-algebraic continuous map.

Let 
$U^{(k)}_d \subset \Psi^{(k)}_{d}(\Weyl^{(k)})$ be the subset of points $\y=(y_1,\ldots,y_{d'})$ of 
$\Psi^{(k)}_{d}(\Weyl^{(k)})$ such that $V_\y \subset \R^k$ defined by $(p^{(k)}_1,\ldots,p^{(k)}_{d'}) = \y$ is non-singular. 

We have the following equalities.
\begin{proposition}
\label{prop:arnold-preparation}
\[
\Weyl^{(k)}_d = \overline{F^{(k)}_d(U^{(k)}_d) } = F^{(k)}_d(\Psi^{(k)}_{d}(\Weyl^{(k)})).
\]
\end{proposition}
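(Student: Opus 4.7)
The plan is to denote $A := \Weyl^{(k)}_d$, $B := \overline{F^{(k)}_d(U^{(k)}_d)}$, $C := F^{(k)}_d(\Psi^{(k)}_d(\Weyl^{(k)}))$, and prove the three equalities via the three inclusions $B \subset A$, $C \subset A$, and $A \subset B \cap C$. The case $d > k$ is trivial: then $d' = k$, the map $\Psi^{(k)}_d$ equals $\Psi^{(k)}_k$ which is a homeomorphism by Theorem \ref{thm:Weyl-diffeo}, every fibre is a single point, $F^{(k)}_d = (\Psi^{(k)}_k)^{-1}$, and $A = B = C = \Weyl^{(k)}$. So assume $d \leq k$ henceforth. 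The first step is to rewrite $A = \bigcup_{\lambda \in \CompMax(k,d)} \Weyl_\lambda$, a closed set, using the fact that $\Weyl_{\lambda'} \subset \Weyl_\lambda$ whenever $\lambda' \prec \lambda$ so that the union over $\Comp(k,d)$ collapses to one over $\CompMax(k,d)$.

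The two inclusions $B \subset A$ and $C \subset A$ are quick consequences of Theorem \ref{thm:arnold}. For $B \subset A$: by Part \eqref{itemlabel:thm:arnold:b}, for every $\y \in U^{(k)}_d$ the maximizer $F^{(k)}_d(\y)$ lies in some $\Weyl_\lambda$ with $\lambda \in \CompMax(k,d)$, hence $F^{(k)}_d(U^{(k)}_d) \subset A$ and $B \subset A$ by closedness of $A$. For $C \subset A$: the set $U^{(k)}_d$ is the complement in $\Psi^{(k)}_d(\Weyl^{(k)})$ of the image of the real discriminant locus, a proper semi-algebraic subvariety, so $U^{(k)}_d$ is open and dense. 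By continuity of $F^{(k)}_d$ (Part \eqref{itemlabel:thm:arnold:a}), every $\y \in \Psi^{(k)}_d(\Weyl^{(k)})$ is a limit of points $\y_n \in U^{(k)}_d$, giving $F^{(k)}_d(\y) = \lim_n F^{(k)}_d(\y_n) \in B$, hence $C \subset B \subset A$.

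For the reverse inclusion $A \subset B \cap C$, I would fix $\x \in \Weyl_\lambda$ with $\lambda \in \CompMax(k,d)$ and set $\y := \Psi^{(k)}_d(\x)$. When $\y \in U^{(k)}_d$, Part \eqref{itemlabel:thm:arnold:b} says $\x$ is a local maximum of $p^{(k)}_{d+1}$ on the fibre, which is the unique local maximum by Part \eqref{itemlabel:thm:arnold:a}, so $\x = F^{(k)}_d(\y) \in B \cap C$. For a general $\x \in \Weyl_\lambda$, I would approximate it by $\x_n \in \Weyl_\lambda^o$ with $\y_n := \Psi^{(k)}_d(\x_n) \in U^{(k)}_d$; such $\x_n$ exist because the restriction of $\Psi^{(k)}_d$ to the open face $\Weyl_\lambda^o$ is a local diffeomorphism (its Jacobian is a rescaled Vandermonde-type matrix on the distinct coordinates of the point, hence non-singular), so the preimage of the discriminant in $\Weyl_\lambda^o$ has positive codimension and can be avoided. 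The previous paragraph gives $\x_n = F^{(k)}_d(\y_n) \in F^{(k)}_d(U^{(k)}_d)$, so passing to the limit yields $\x \in B$; and continuity of $F^{(k)}_d$ applied to $\y_n \to \y$ yields $F^{(k)}_d(\y) = \lim \x_n = \x \in C$. The main technical point is this local-diffeomorphism property of $\Psi^{(k)}_d$ on each open face, which is what allows the density-based approximation to avoid the discriminant; once that is in hand, everything else is an application of the two parts of Theorem \ref{thm:arnold} together with continuity and closedness arguments.
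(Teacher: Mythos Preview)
Your proof is correct and follows essentially the same approach as the paper's: both use Part~\eqref{itemlabel:thm:arnold:b} of Theorem~\ref{thm:arnold} for $F^{(k)}_d(U^{(k)}_d)\subset \Weyl^{(k)}_d$, density of $U^{(k)}_d$ via Sard, and the local-diffeomorphism property of $\Psi^{(k)}_d$ on each $\Weyl_\lambda^o$ with $\lambda\in\CompMax(k,d)$ to approximate points of $\Weyl^{(k)}_d$ by points of $F^{(k)}_d(U^{(k)}_d)$. The only difference is that you phrase the approximation with sequences, whereas the paper---working over an arbitrary real closed field $\R$---carries it out with an infinitesimal element of the Puiseux extension $\R\langle\eps\rangle$ (taking $\x'\in\Ext(\Weyl_\lambda^o,\R\langle\eps\rangle)$ with $\lim_\eps\x'=\x$); over a general real closed field your sequential phrasing should be replaced by this device or by the curve selection lemma.
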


\begin{proof}
The second equality follows from the continuity of $F^{(k)}_d$, and the fact that 
by semi-algebraic version of Sard's theorem (see for example \cite[Chapter 5]{BPRbook2}), $U^{(k)}_d$ is
dense in $\Psi^{(k)}_{d}(\Weyl^{(k)})$.

We now prove the first equality.

The inclusion $\overline{F^{(k)}_d(U^{(k)}_d) } \subset \Weyl^{(k)}_d$ is clear, since by 
Part \eqref{itemlabel:thm:arnold:b} of Theorem \ref{thm:arnold},
$F^{(k)}_d(U^{(k)}_d) \subset \Weyl^{(k)}_d$, and $\Weyl^{(k)}_d$ is closed.

We now prove the inclusion $\Weyl^{(k)}_d \subset \overline{F^{(k)}_d(U^{(k)}_d) }$.
Let $\x \in \Weyl^{(k)}_d$.  Then there exists $\lambda \in \CompMax(k,d)$ such that $\x \in \Weyl_\lambda$.
The map $\Psi^{(k)}_d$ is a local diffeomorphism on $\Weyl_\lambda^o$, and the set dimension of the set of critical values
of  $\Psi^{(k)}_d$ is of dimension at most $d-1$ by the semi-algebraic version of Sard's theorem. Thus, there exists 
$\x' \in \Ext(\Weyl_\lambda^o,\R\la\eps\ra)$ such that $\lim_\eps \x' = \x$, 
$\y' = \Psi^{(k)}_d(\x')$
is a regular value of the map $\Psi^{(k)}_d$, and hence
$\y'  \in \Ext(U^{(k)}_d,\R\la\eps\ra)$.
Then, $\x' = F^{(k)}_d(\y')  \in \Ext(F^{(k)}_d(U^{(k)}_d),\R\la\eps\ra)$, and since $\x = \lim_\eps \x'$, 
$\x \in \overline{F^{(k)}_d(U^{(k)}_d)}$.
\end{proof}

\begin{proposition}\label{prop:arnold}
Let $1 < d$, and 
$S \subset \R^k$  a closed and bounded symmetric semi-algebraic set defined by symmetric polynomials of degrees bounded by $d$.
Then the  following holds.
\begin{enumerate}[1.]
\item
\label{itemlabel:prop:arnold:b}
The map $\Psi^{(k)}_{d}$ restricted to $S_{k,d}$ is a semi-algebraic homeomorphism on to its image.
\item
\label{itemlabel:prop:arnold:c}
\[
\HH^*(S_{k,d},\F) \cong  \HH^*(S/\mathfrak{S}_k,\F).
\]
\end{enumerate}

More generally, let 
$\dd,\kk \in \Z_{> 1}^\omega$ with $1^\omega < \dd$,
and $S$ a 
bounded
$\mathcal{P}$-closed semi-algebraic set, where
$\mathcal{P} \subset \R[\X^{(1)},\ldots,\X^{(\omega)}]^{\mathfrak{S}_\kk}_{\leq \dd}$.
Then,

\begin{enumerate}[1{$'$}.]
\item
$\boldPsi^{(\kk)}_{\dd}$ restricted to $S_{\kk,\dd}$ is a semi-algebraic homeomorphism on to its image, and
\item
\[
\HH^*(S_{\kk,\dd},\F) \cong  \HH^*(S/\mathfrak{S}_\kk,\F).
\]
\end{enumerate}
\end{proposition}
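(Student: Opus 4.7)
The plan is to deduce both parts from Proposition \ref{prop:arnold-preparation}, which provides a continuous semi-algebraic section $F^{(k)}_d$ of $\Psi^{(k)}_d$ whose image is exactly $\Weyl^{(k)}_d$, combined with Proposition \ref{prop:vanishing}. I would first build a homeomorphism on the full stratified set $\Weyl^{(k)}_d$, then restrict to $S_{k,d}$, and finally compare cohomology with $S/\mathfrak{S}_k$.

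For Part \eqref{itemlabel:prop:arnold:b}, the claim I would establish first is that $\Psi^{(k)}_d$ restricted to $\Weyl^{(k)}_d$ is a homeomorphism onto $\Psi^{(k)}_d(\Weyl^{(k)})$ with inverse $F^{(k)}_d$. Surjectivity is Proposition \ref{prop:arnold-preparation}, and the relation $\Psi^{(k)}_d\circ F^{(k)}_d=\mathrm{id}$ holds by construction of $F^{(k)}_d$. For the other composition, given $\x\in \Weyl^{(k)}_d$ I write $\x=F^{(k)}_d(\y)$ using Proposition \ref{prop:arnold-preparation}; then $F^{(k)}_d(\Psi^{(k)}_d(\x))=F^{(k)}_d(\y)=\x$. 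Continuity of $F^{(k)}_d$ then upgrades this bijection to a genuine homeomorphism, and restricting to the closed subset $S_{k,d}\subset \Weyl^{(k)}_d$ immediately yields the desired homeomorphism onto $\Psi^{(k)}_d(S_{k,d})$.

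For Part \eqref{itemlabel:prop:arnold:c}, the strategy is to establish the set-theoretic equality $\Psi^{(k)}_d(S_{k,d})=\Psi^{(k)}_d(S)$, after which Part \eqref{itemlabel:prop:arnold:b} gives $\HH^{\ast}(S_{k,d},\F)\cong \HH^{\ast}(\Psi^{(k)}_d(S),\F)$, and combining with Parts \eqref{itemlabel:prop:vanishing:1} and \eqref{itemlabel:prop:vanishing:2} of Proposition \ref{prop:vanishing} yields $\HH^{\ast}(S_{k,d},\F)\cong \HH^{\ast}(S/\mathfrak{S}_k,\F)$. The only nontrivial inclusion is $\Psi^{(k)}_d(S)\subset \Psi^{(k)}_d(S_{k,d})$: given $\y=\Psi^{(k)}_d(\x)$ with $\x\in S$, I set $\x'=F^{(k)}_d(\y)\in\Weyl^{(k)}_d$. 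To see $\x'\in S$ I would reuse the fiber-constancy observation appearing in the proof of Proposition \ref{prop:vanishing}: every $P\in\mathcal{P}$ can be written as $P=\widetilde{P}(p^{(k)}_1,\ldots,p^{(k)}_{d'})$ in the power-sum basis, so $P$ is constant along fibers of $\Psi^{(k)}_d$, forcing $P(\x')=P(\x)$ for all defining polynomials. Since $S$ is $\mathcal{P}$-closed, this gives $\x'\in S$ and hence $\x'\in S_{k,d}$.

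The multi-block statements follow by the same argument applied blockwise, with the section built as the product of the individual $F^{(k_i)}_{d_i}$, the fiber-constancy step supported by Corollary \ref{cor:Kostov} and the tensor-product decomposition of $\R[\X^{(1)},\ldots,\X^{(\omega)}]^{\mathfrak{S}_\kk}_{\leq\dd}$, and the cohomological input taken from the full multi-block form of Proposition \ref{prop:vanishing}. The main obstacle, and the only nonformal input, is the identity $F^{(k)}_d\circ\Psi^{(k)}_d=\mathrm{id}$ on all of $\Weyl^{(k)}_d$, which would fail if one restricted attention only to the regular locus of $\Psi^{(k)}_d$; this is precisely the gap bridged by the Puiseux-series density argument inside Proposition \ref{prop:arnold-preparation}.
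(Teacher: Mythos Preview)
Your proposal is correct and follows essentially the same route as the paper, which simply asserts that Part~\eqref{itemlabel:prop:arnold:b} follows from Proposition~\ref{prop:arnold-preparation} and Part~\eqref{itemlabel:prop:arnold:c} from Part~\eqref{itemlabel:prop:arnold:b} together with Proposition~\ref{prop:vanishing}. You have accurately unpacked these citations: the section $F^{(k)}_d$ furnished by Proposition~\ref{prop:arnold-preparation} is the continuous inverse of $\Psi^{(k)}_d|_{\Weyl^{(k)}_d}$, and the fiber-constancy of the defining polynomials (which you correctly trace back to the argument inside Proposition~\ref{prop:vanishing}) gives $\Psi^{(k)}_d(S_{k,d})=\Psi^{(k)}_d(S)$, after which Proposition~\ref{prop:vanishing} closes the loop.
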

\begin{proof}
We only prove Parts  \eqref{itemlabel:prop:arnold:b} and
\eqref{itemlabel:prop:arnold:c}. The remaining parts follow directly from these two.
Part \eqref{itemlabel:prop:arnold:b} follows from Proposition \ref{prop:arnold-preparation},
and
Part \eqref{itemlabel:prop:arnold:c} follows  from Part \eqref{itemlabel:prop:arnold:b} and
Proposition \ref{prop:vanishing}.
\end{proof}

\begin{example}
\label{eg:3-sphere}
In order to understand the geometry behind Proposition \ref{prop:arnold} it might be useful to
consider the example of the two-dimensional sphere in $S \subset\R^3$ defined by the symmetric quadratic polynomial equation
\[
p_2^{(3)}(X_!,X_2,X_3) - 1= X_1^2 + X_2^2 + X_3^2 -1 =0.
\]
 
The intersection of $S$ with the Weyl chamber, $\Weyl^{(3)}$ defined
by $X_1 \leq X_2 \leq X_3$, is contractible and is homologically equivalent to $S / \mathfrak{S}_3$, via
the map $\Psi^{(3)}_2 = (p^{(3)}_1, p^{(3)}_2): S \cap \Weyl^{(3)} \rightarrow \R^2$. The image of this map
in $\R^2$ is the line segment defined by $-\sqrt{3} \leq p^{(3)}_1 \leq \sqrt{3}, p_2^{(3)} = 1$, and
is homotopy
equivalent to $S / \mathfrak{S}_3$. For each $\y = (y_1,y_2) \in \R^2$ which belongs to the image, the fiber 
$(\Psi^{(3)}_2)^{-1}(\y) \subset S$ is defined by 
\[
X_1+X_2+X_3 = y_1, X_1^2 + X_2^2 + X_3^2 =1, X_1 \leq X_2 \leq X_3,
\] 
and is 
easily seen to be a connected arc and hence contractible. Moreover, the maximum of $p^{(3)}_3$ restricted to this arc belong to the face defined by $X_2 = X_3$ of the Weyl chamber. The set, $S_{3,2}$ of these maximums, is an arc defined by  
\[
X_1^2 + X_2^2 + X_3^2 =1, X_1\leq  X_2 = X_3,
\]
and 
defines a section over the image of $\Psi^{(3)}_2(S \cap \Weyl^{(3)})$, and is homologically equivalent to  to $S/\mathfrak{S}_3$.
Notice  also that $S_{3,2}$ is contained in the face $\Weyl^{(3)}_\lambda$, where $\lambda = (1,2) \in \Comp(k,2)$.
The  two sets, $S\cap  \Weyl^{(3)}$ and  $S_{3,2}$, are shown in Figure \ref{fig:sphere}.

\begin{figure}
\subfigure[$S\cap  \Weyl^{(3)}$]{\includegraphics[width=0.49\textwidth]{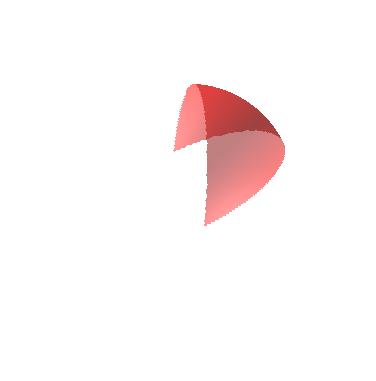}}
    \subfigure[$S_{3,2}$]{\includegraphics[width=0.49\textwidth]{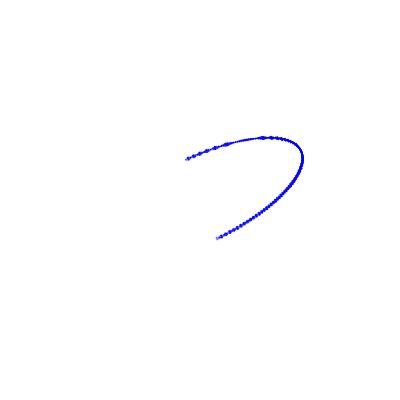}}
\caption{Visualization of Example \ref{eg:3-sphere}.}
\label{fig:sphere}
\end{figure}

\end{example}

The following is easy to prove.

\begin{proposition}
\label{prop:closure-compcat}
Let $\lambda, \lambda' \in \Comp(k,d)$. Then there exists $\lambda'' \in \Comp(k,d)$ such that 
$\Weyl_{\lambda''} = \Weyl_{\lambda}\cap \Weyl_{\lambda'}$. 

More generally, let $\kk,\dd \in \Z_{\geq 0}^\omega$, and
let $\boldlambda, \boldlambda' \in \bComp(\kk,\dd)$. Then there exists $\boldlambda'' \in \bComp(\kk,\dd)$ such that 
$\bWeyl_{\boldlambda''} = \bWeyl_{\boldlambda}\cap \bWeyl_{\boldlambda'}$. 

\end{proposition}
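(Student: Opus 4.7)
The plan is to identify each composition $\lambda=(\lambda_1,\ldots,\lambda_\ell)\in\Comp(k)$ with the ordered partition of $[1,k]$ into the consecutive blocks $I_j^\lambda=[\lambda_1+\cdots+\lambda_{j-1}+1,\;\lambda_1+\cdots+\lambda_j]$ for $1\le j\le\ell$. Under this identification, $\Weyl_\lambda$ is the set of points $(x_1,\ldots,x_k)\in\Weyl^{(k)}$ such that $x_i=x_j$ whenever $i,j$ lie in a common block $I_m^\lambda$; the inequalities $x_1\le\cdots\le x_k$ come for free from $\Weyl^{(k)}$. Consequently, for $\lambda,\lambda'\in\Comp(k,d)$, the intersection $\Weyl_\lambda\cap\Weyl_{\lambda'}$ is exactly the subset of $\Weyl^{(k)}$ on which $x_i=x_j$ whenever $i,j$ lie in a common block of $\lambda$ or of $\lambda'$.

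Using transitivity of equality, this condition is controlled by the equivalence relation $\sim$ on $[1,k]$ generated by the block relations of both $\lambda$ and $\lambda'$. The key combinatorial observation is that the equivalence classes of $\sim$ are again consecutive intervals of $[1,k]$: a family of intervals on a line joined by overlaps or shared endpoints has connected components that are intervals, and merging across the two families only iterates this fact. Reading off the sizes of these classes in their natural left-to-right order yields a composition $\lambda''\in\Comp(k)$, and by construction $\Weyl_{\lambda''}=\Weyl_\lambda\cap\Weyl_{\lambda'}$.

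It then remains to confirm that $\lambda''\in\Comp(k,d)$. The case $d>k$ is immediate since $\Comp(k,d)=\Comp(k)$. For $d\le k$, by construction $\Weyl_{\lambda''}\subset\Weyl_\lambda$, so $\lambda''\prec\lambda$ in the sense of Definition \ref{def:poset-compcat}; since $\lambda\in\Comp(k,d)$ there exists $\mu\in\CompMax(k,d)$ with $\lambda\prec\mu$, and transitivity of $\prec$ gives $\lambda''\prec\mu$, hence $\lambda''\in\Comp(k,d)$. The multi-block statement follows at once from the single-block one: applying the above construction coordinate-wise produces $\lambda''^{(i)}\in\Comp(k_i,d_i)$ with $\Weyl_{\lambda''^{(i)}}=\Weyl_{\lambda^{(i)}}\cap\Weyl_{\lambda'^{(i)}}$, and the product structure $\bWeyl_{\boldlambda}=\Weyl_{\lambda^{(1)}}\times\cdots\times\Weyl_{\lambda^{(\omega)}}$ then yields $\bWeyl_{\boldlambda''}=\bWeyl_{\boldlambda}\cap\bWeyl_{\boldlambda'}$ with $\boldlambda''\in\bComp(\kk,\dd)$.

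There is no genuine obstacle; the only point requiring care is the stability of ``partitions of $[1,k]$ into consecutive intervals'' under taking joins in the lattice of set partitions, but this reduces to the one-line fact about connected unions of intervals noted above.
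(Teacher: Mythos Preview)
Your proof is correct and complete. The paper itself gives no proof of this proposition, merely remarking that it ``is easy to prove,'' so your argument supplies exactly the details the authors omitted: the identification of compositions with interval partitions of $[1,k]$, the observation that the join of two interval partitions is again an interval partition, and the downward-closure argument showing $\lambda''\in\Comp(k,d)$ via $\lambda''\prec\lambda\prec\mu$ for some $\mu\in\CompMax(k,d)$.
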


\hide{
\begin{definition}
\label{def:poset-compcat}
Let $k,d \in \Z_{\geq 0}$, and  $\lambda,\mu \in \Comp(k,d)$. We denote, $\lambda \prec \mu$, if $\Weyl_{\lambda} \subset \Weyl_{\mu}$.
It is clear that $\prec$ is a partial order on $\Comp(k,d)$ making $\Comp(k,d)$ into a poset.

For $\kk,\dd \in \Z_{\geq 0}^\omega$,
and  $\boldlambda=(\lambda^{(1)},\ldots,\lambda^{(\omega)}), \boldmu = (\mu^{(1)},\ldots,\mu^{(\omega)}) \in \bComp(\kk,\dd)$,
we denote, $\boldlambda \prec \boldmu$, if
$\lambda^{(i)} \prec \mu^{(i)}$ for all $i, 1\leq i \leq \omega$.
It its clear that $\prec$ extends the partial order on $\Comp(k,d)$ defined above.
\end{definition}
}

Recall that a chain $\sigma$ of a finite poset $P$ is an ordered sequence 
$\sigma_1 \prec \sigma_2 \prec \cdots \prec \sigma_m$ with
$\sigma_i \neq \sigma_{i+1}$ for  $1 \leq i < m$.

\begin{notation}
For $d,k \geq 0$, we denote by $\Sigma_{k,d} $ denote the set of chains of the poset $\Comp(k,d)$.
More generally, for $\kk,\dd \in \Z_{\geq 0}^\omega$, we denote by
$\boldSigma_{\kk,\dd}$ the chains of the poset $\bComp(\kk,\dd)$.
\end{notation}

\begin{proposition}
\label{prop:chainsCompcat}
 For $d,k \geq 0$,
\begin{eqnarray*}
\card(\Sigma_{k,d}) &\leq& 
(2^d -1) \prod_{i=1}^{\lfloor d/2 \rfloor -1}(k - \lceil d/2 \rceil-i) \mbox{ if $d \leq k$}, \\
&\leq& (2^k -1) (k-1)! \mbox{ if $d > k$}.
\end{eqnarray*} 

More generally, for $\dd = (d_1,\ldots,d_\omega),\kk=(k_1,\ldots,k_\omega) \in \Z_{\geq 0}^\omega$,
\begin{eqnarray*}
\card(\boldSigma_{\kk,\dd}) &=& \prod_{i=1}^{\omega} \card(\Sigma_{k_i,d_i}).
\end{eqnarray*}
\end{proposition}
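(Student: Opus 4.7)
I would first reduce the multi-block case to the single-block one: since $\bComp(\kk,\dd) = \prod_{i=1}^\omega \Comp(k_i,d_i)$ carries the product order (Definition~\ref{def:poset-compcat}), a chain in the product decomposes uniquely as a tuple of chains in the factors, giving $\card(\boldSigma_{\kk,\dd}) = \prod_{i=1}^{\omega}\card(\Sigma_{k_i,d_i})$. So only the single-block bound on $\card(\Sigma_{k,d})$ needs work.

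For the single-block case, the key tool is the cut-set bijection $\lambda = (\lambda_1,\dots,\lambda_\ell)\longleftrightarrow S_\lambda=\{\lambda_1,\ \lambda_1+\lambda_2,\ \dots,\ \lambda_1+\cdots+\lambda_{\ell-1}\}\subseteq[k-1]$. Under this bijection $|S_\lambda|=\length(\lambda)-1$, and the partial order $\prec$ on $\Comp(k)$ (Definition~\ref{def:poset-compcat}) translates exactly to $\subseteq$ on cut sets; in particular the down-set of $\lambda$ is order-isomorphic to $2^{S_\lambda}$. For $\lambda\in\CompMax(k,d)$ the constraint $\lambda_{2i+1}=1$ forces $|S_\lambda|=d-1$ and pins down the odd-indexed parts; the remaining even-indexed parts $\lambda_2,\lambda_4,\dots$ form a composition of $k-\lceil d/2\rceil$ into $\lfloor d/2\rfloor$ positive parts, so
\[
\card(\CompMax(k,d)) = \binom{k-\lceil d/2\rceil-1}{\lfloor d/2\rfloor-1} = \frac{1}{(\lfloor d/2\rfloor-1)!}\prod_{i=1}^{\lfloor d/2\rfloor-1}(k-\lceil d/2\rceil-i).
\]

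Any chain $\sigma_1\prec\cdots\prec\sigma_m$ in $\Comp(k,d)$ has its top $\sigma_m$ sitting below some $\lambda\in\CompMax(k,d)$, so the whole chain lies inside the Boolean interval $2^{S_\lambda}\cong B_{d-1}$. I would then bound $\card(\Sigma_{k,d})$ by summing contributions over $\lambda\in\CompMax(k,d)$, stratifying each chain by its \emph{length profile} $\{\length(\sigma_1)<\cdots<\length(\sigma_m)\}\subseteq[d]$, which is a non-empty subset of $[d]$ (hence at most $2^d-1$ profiles). For each profile and each $\lambda$, the chains inside $B_{d-1}$ with that profile correspond to nested subsets of $S_\lambda$ of prescribed cardinalities; counting these, multiplying by $\card(\CompMax(k,d))$, and performing a de-duplication that cancels the $(\lfloor d/2\rfloor-1)!$ in the binomial yields the stated bound. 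For $d>k$ the poset $\Comp(k,d)=\Comp(k)\cong B_{k-1}$, and the same stratification by length profiles gives the simpler $(2^k-1)(k-1)!$.

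The main obstacle is precisely this de-duplication: a single chain is contained in the down-set of potentially many different $\lambda\in\CompMax(k,d)$, so the naive sum $\sum_\lambda(\text{chains in }B_{d-1})$ vastly overcounts. My plan is to charge each chain to a canonical $\lambda$ --- for instance the lex-first extension of its top cut set $S_{\sigma_m}$ to an element of $\CompMax(k,d)$ --- so that the sum over $\lambda$ becomes a disjoint union. The $(\lfloor d/2\rfloor-1)!$ factor appearing in $\card(\CompMax(k,d))$ then absorbs the number of such canonical extensions, leaving exactly the product $\prod_{i=1}^{\lfloor d/2\rfloor-1}(k-\lceil d/2\rceil-i)$ times a per-profile factor that sums to $2^d-1$ across profiles. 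Getting this bookkeeping to line up with the precise constants in the statement is the delicate combinatorial part.
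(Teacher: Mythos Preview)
Your reduction of the multi-block case contains a genuine error: a chain in a product poset does \emph{not} decompose bijectively into a tuple of chains in the factors. Projecting a chain in $\bComp(\kk,\dd)$ to each coordinate gives only a \emph{weakly} increasing sequence (repetitions allowed), and conversely a tuple of chains in the factors can be interleaved into many distinct chains in the product. Concretely, if $P=Q=\{0<1\}$ then each factor has $3$ non-empty chains but $P\times Q\cong B_2$ has $11$, so neither ``$=$'' nor ``$\leq$'' holds for the product formula as you argue it. (The paper's own proof does not address the multi-block line at all, and its ``$=$'' there is likely just intended as a bound of the same order; but your justification is incorrect regardless.)

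For the single-block bound you are working far harder than the paper does. The paper's entire argument is two sentences: every non-empty chain in $\Comp(k,d)$ is a sub-chain of some \emph{maximal} chain; a maximal chain has $d$ elements and hence $2^d-1$ non-empty sub-chains; therefore $\card(\Sigma_{k,d})\le (2^d-1)\cdot(\text{number of maximal chains})$, and overcounting is explicitly accepted because only an upper bound is needed. There is no de-duplication, no stratification by length profile, no canonical-$\lambda$ assignment --- the ``delicate combinatorial part'' you anticipate simply does not exist in the paper's approach. Moreover, since each maximal chain has a unique top element $\lambda\in\CompMax(k,d)$ and the interval below $\lambda$ is $B_{d-1}$ with $(d-1)!$ maximal chains, this crude count actually yields $(2^d-1)\,\card(\CompMax(k,d))\,(d-1)!$, which differs from the displayed constant in the Proposition by a factor depending only on $d$. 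So do not expect your careful bookkeeping to reproduce the stated expression exactly: the paper's own constants are loose, and all that matters downstream is that $\card(\Sigma_{k,d})$ is bounded by something of the form $C(d)\cdot k^{\lfloor d/2\rfloor-1}$.
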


\begin{proof}
It is easy to see that the number of maximal chains (of length $d$ in $\Comp(k,d)$)  is
equal to 
\[
\prod_{i=1}^{\lfloor d/2\rfloor - 1}(k - \lceil d/2 \rceil - i).
\] 
Each maximal chain has  $(2^d -1)$ sub-chains.
Some of these chains are being counted more than once, but we are only interested in an upper bound.
\end{proof}

\subsubsection{Systems of neighborhoods}

Let $\overline{\eps} = (\eps_0,\ldots,\eps_k)$, and for $0 \leq i \leq k$,
$\R_i = \R\la \eps_0,\ldots,\eps_i \ra$.

\begin{definition}
\label{def:tilde-W-lambda}
For $k,d \in \Z_{\geq 0}$, $\lambda \in \Comp(k,d)$, we denote
\begin{eqnarray*}
P_\lambda &=& \sum_{i=1}^{\length(\lambda)} \sum_{j=\lambda_1+\cdots+\lambda_{i-1}+1}^{\lambda_1+\cdots+\lambda_i}\sum_{j'=j+1}^{\lambda_1+\cdots+\lambda_i} (X_j - X_{j'})^2,
\end{eqnarray*}
and
\begin{equation*}
\widetilde{\Weyl}_\lambda =
\{ \x \in \Ext(\Weyl^{(k)},\R_{\length(\lambda)}) \mid 
(P_\lambda - \eps_{\length(\lambda)} \leq 0) \wedge \bigwedge_{\substack{\mu \prec \lambda,\\
\mu \neq \lambda}} (P_\mu - \eps_{\length(\mu)} \geq 0) \}.
\end{equation*}

More generally,
for $\kk,\dd \in \Z_{\geq 0}^\omega$, and $\boldlambda = (\lambda^{(1)},\ldots,\lambda^{(\omega)}) \in \bComp(\kk,\dd)$, we denote
\begin{eqnarray*}
P_{\boldlambda}  &=& \sum_{i=1}^{\omega} P_{\lambda^{(i)}},
\end{eqnarray*}
and
\begin{equation*}
\widetilde{\bWeyl}_{\boldlambda} =
\{ \x \in \Ext(\Weyl^{(\kk)},\R_{\length(\boldlambda)}) \mid 
(P_{\boldlambda}  - \eps_{\length(\boldlambda)} \leq 0) \wedge \bigwedge_{\substack{\boldmu \prec \boldlambda,\\
\boldmu \neq \boldlambda}} (P_{\boldmu} - \eps_{\length(\boldmu)} \geq 0) \}.
\end{equation*}
\end{definition}

\begin{example}
\begin{figure}
\begin{picture}(0,0)%
\includegraphics{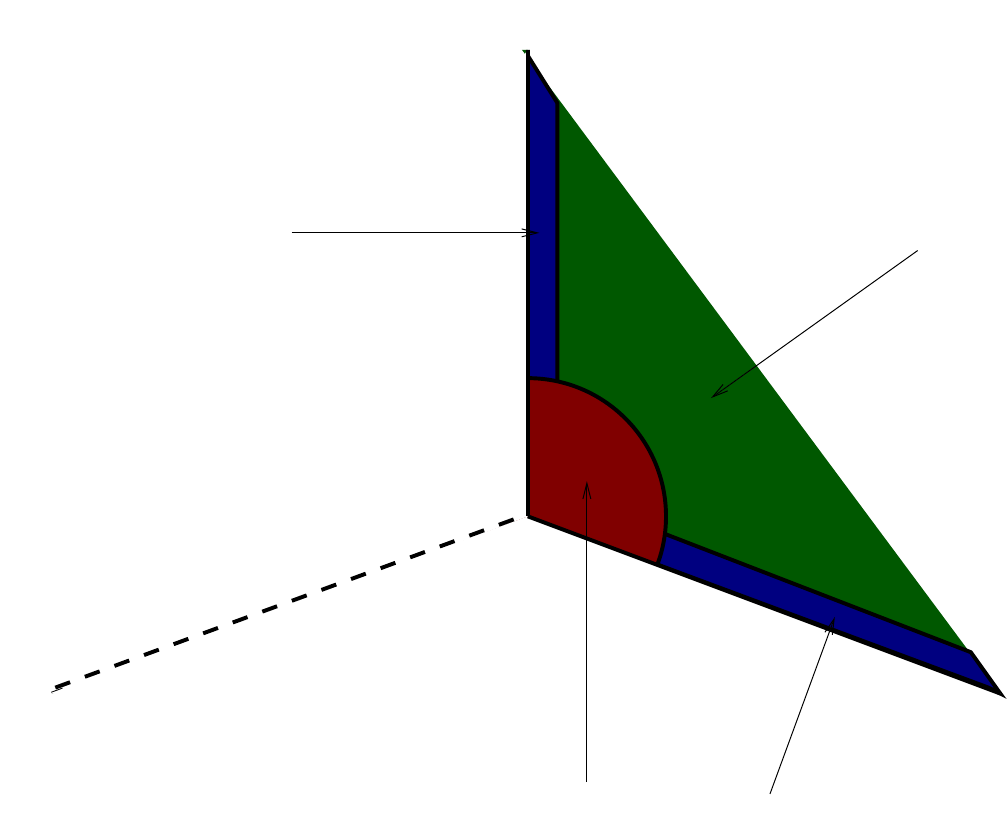}%
\end{picture}%
\setlength{\unitlength}{2486sp}%
\begingroup\makeatletter\ifx\SetFigFont\undefined%
\gdef\SetFigFont#1#2#3#4#5{%
  \reset@font\fontsize{#1}{#2pt}%
  \fontfamily{#3}\fontseries{#4}\fontshape{#5}%
  \selectfont}%
\fi\endgroup%
\begin{picture}(7653,6279)(931,-6475)
\put(2431,-1996){\makebox(0,0)[lb]{\smash{{\SetFigFont{7}{8.4}{\familydefault}{\mddefault}{\updefault}{\color[rgb]{0,0,0}$\widetilde{\Weyl}_{(2,1)}$}%
}}}}
\put(4681,-331){\makebox(0,0)[lb]{\smash{{\SetFigFont{7}{8.4}{\familydefault}{\mddefault}{\updefault}{\color[rgb]{0,0,0}$X_1=X_2$}%
}}}}
\put(8326,-5776){\makebox(0,0)[lb]{\smash{{\SetFigFont{7}{8.4}{\familydefault}{\mddefault}{\updefault}{\color[rgb]{0,0,0}$X_2=X_3$}%
}}}}
\put(946,-5731){\makebox(0,0)[lb]{\smash{{\SetFigFont{7}{8.4}{\familydefault}{\mddefault}{\updefault}{\color[rgb]{0,0,0}$X_1=X_3$}%
}}}}
\put(6571,-6406){\makebox(0,0)[lb]{\smash{{\SetFigFont{7}{8.4}{\familydefault}{\mddefault}{\updefault}{\color[rgb]{0,0,0}$\widetilde{\Weyl}_{(1,2)}$}%
}}}}
\put(8011,-2041){\makebox(0,0)[lb]{\smash{{\SetFigFont{7}{8.4}{\familydefault}{\mddefault}{\updefault}{\color[rgb]{0,0,0}$\widetilde{\Weyl}_{(1,1,1)}$}%
}}}}
\put(5401,-6361){\makebox(0,0)[lb]{\smash{{\SetFigFont{7}{8.4}{\familydefault}{\mddefault}{\updefault}{\color[rgb]{0,0,0}$\widetilde{\Weyl}_{(3)}$}%
}}}}
\end{picture}%
\caption{Cross-section of $\widetilde{W}^{(3)}$ in the hyperplane $X_1+X_2+X_3=0$.} 
\label{fig:Weyl}
\end{figure}
Before proceeding further it might be useful to visualize the different $\widetilde{\Weyl}_{\lambda}$ 
in the case $k=3$.
We display the intersections of different  $\widetilde{\Weyl}_\lambda, \lambda \in \Comp(3)$
with the hyperplane defined by $X_1 + X_2 + X_3 = 0$ in Figure \ref{fig:Weyl}. The Hasse diagram of the poset $\Comp(3)$ is
as follows.

\[
\xymatrix{
& (1,1,1) &\\
(1,2) \ar[ur] && (2,1) \ar[ul] \\
& (3) \ar[ul] \ar[ur] &
}
\]

It is clear from the Figure \ref{fig:Weyl}, that for $\Lambda \subset \Comp(3)$, 
\[
\bigcap_{\lambda \in \Lambda} \widetilde{\Weyl}_\lambda
\] 
is non-empty  and only if the elements of $\Lambda$ form a chain in $\Comp(3)$. The list of chains in $\Comp(3)$ is
$$\displaylines{
(3), (1,2), (2,1), (1,1,1), \cr
(3) \prec (1,2), (3) \prec (2,1), (3) \prec (1,1,1), (1,2) \prec(1,1,1), (2,1) \prec (1,1,1), \cr
(3)\prec (1,2) \prec (1,1,1), (3) \prec (2,1) \prec (1,1,1).
}
$$
It can be seen from Figure \ref{fig:Weyl} that the corresponding intersections of the $\widetilde{\Weyl}_\lambda$'s for each chain listed above is non-empty.
\end{example}

\begin{notation}
For $k,d \in \Z_{\geq 0}$, $\lambda \in \Comp(k,d)$, and
for any semi-algebraic subset $S \subset \R^k$, we denote by $\widetilde{S}_\lambda$ the set
$\Ext(S,\R_{\length(\lambda)}) \cap \widetilde{\Weyl}_\lambda$, and denote
\[
\widetilde{S}_{k,d} = \bigcup_{\lambda \in \Comp(k,d)} \Ext(\widetilde{S}_\lambda,\R_{d'}),
\] 
where $d' = \min(k,d)$.

For a chain $\sigma \in \Sigma_{k,d}$, we denote  
\[
\widetilde{S}_\sigma  =  \bigcap_{\lambda \in \sigma} \Ext(\widetilde{S}_\lambda,\R_\ell),
\]
where $\ell = \length(\max(\sigma))$.

More generally,
for $\kk=(k_1,\ldots,k_\omega),\dd=(d_1,\ldots,d_\omega) \in \Z_{\geq 0}^\omega$, $k = |\kk|$, $\boldlambda \in \bComp(\kk,\dd)$,
and for any semi-algebraic subset $S \subset \R^k$, we denote by $\widetilde{S}_{\boldlambda}$ the set
$\Ext(S,\R_{\length(\boldlambda)}) \cap \widetilde{\bWeyl}_{\boldlambda}$, and denote
\[
\widetilde{S}_{\kk,\dd} = \bigcup_{\boldlambda \in \bComp(\kk,\dd)} \Ext(\widetilde{S}_{\boldlambda},\R_d'),
\] 
where $d' = \sum_{i=1}^{\omega} \min(k_i,d_i)$.
For a chain $\boldsigma \in \boldSigma_{k,d}$, we denote  
\[
\widetilde{S}_{\boldsigma}  =  \bigcap_{\boldlambda \in \boldsigma} \Ext(\widetilde{S}_{\boldlambda},\R_\ell),
\]
where $\ell = \length(\max(\boldsigma))$.
\end{notation}

\begin{proposition}
\label{prop:limit}
Let $k,d \in \Z_{\geq 0}$, and $S \subset \R^k$  a closed and bounded semi-algebraic set. Then,
\[
\lim_{\eps_0} \widetilde{S}_{k,d} = S_{k,d}.
\]
More generally,
let $\kk,\dd \in \Z_{\geq 0}^\omega$, $k = |\kk|$, and  $S \subset \R^k$ a closed and bounded semi-algebraic set. Then,
\[
\lim_{\eps_0} \widetilde{S}_{\kk,\dd} = S_{\kk,\dd}.
\]

\end{proposition}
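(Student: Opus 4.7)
The plan is to prove the two inclusions of the claimed equality separately, commuting $\lim_{\eps_0}$ through the finite union defining $\widetilde{S}_{k,d}$ to reduce to a stratum-by-stratum analysis. The driving observation is that for each $\lambda \in \Comp(k,d)$, the polynomial $P_\lambda$ is a sum of squares whose zero set in $\R^k$ equals exactly $L_\lambda$, so that $\Weyl_\lambda = L_\lambda \cap \Weyl^{(k)}$ is cut out from the Weyl chamber by $\{P_\lambda = 0\}$.

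For $\lim_{\eps_0}\widetilde{S}_{k,d} \subset S_{k,d}$, I would reduce to showing $\lim_{\eps_0}\widetilde{S}_\lambda \subset S \cap \Weyl_\lambda$ for each $\lambda$. Given $\x' \in \widetilde{S}_\lambda$, the closedness and boundedness of $S$ place $\lim_{\eps_0}\x'$ in $S \cap \Weyl^{(k)}$, while the condition $P_\lambda(\x') \leq \eps_{\length(\lambda)}$ together with $P_\lambda \geq 0$ forces $P_\lambda(\lim_{\eps_0}\x') = 0$, hence $\lim_{\eps_0}\x' \in L_\lambda \cap \Weyl^{(k)} = \Weyl_\lambda$.

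For the reverse inclusion, take $\x \in S \cap \Weyl_\lambda$ with $\lambda \in \Comp(k,d)$, and choose $\nu \in \Comp(k)$ minimal under $\prec$ subject to $\x \in \Weyl_\nu$. Then $\nu \prec \lambda$, and since $\Comp(k,d)$ is closed downward under $\prec$ by its very definition via $\CompMax(k,d)$, one has $\nu \in \Comp(k,d)$. Minimality of $\nu$ forces $\x \in \Weyl_\nu^o$, i.e., $P_\mu(\x) > 0$ for every $\mu \prec \nu$ with $\mu \neq \nu$. I would then verify that $\x$, viewed as a point of $\R_{d'}^k$, already lies in $\widetilde{\Weyl}_\nu$: $P_\nu(\x) = 0 \leq \eps_{\length(\nu)}$ trivially, and each $P_\mu(\x)$ is a strictly positive real, hence exceeds the infinitesimal $\eps_{\length(\mu)}$. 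Therefore $\x \in \widetilde{S}_\nu \subset \widetilde{S}_{k,d}$ and $\lim_{\eps_0}\x = \x$.

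The multi-block statement is handled by the same argument verbatim under the substitutions $\lambda \to \boldlambda$, $\Comp(k,d) \to \bComp(\kk,\dd)$, $\Weyl_\lambda \to \bWeyl_{\boldlambda}$, $\Weyl^{(k)} \to \bWeyl^{(\kk)}$, and $P_\lambda \to P_{\boldlambda}$, using that $P_{\boldlambda} = \sum_i P_{\lambda^{(i)}}$ is still a sum of squares vanishing precisely on the linear span of $\bWeyl_{\boldlambda}$ and that the componentwise extension of $\prec$ inherits downward closure. The main subtlety is the minimality step in the reverse inclusion, which rests on the downward closure of $\Comp(k,d)$ (resp.\ $\bComp(\kk,\dd)$) and on correctly comparing strictly positive real values of $P_\mu$ against the infinitesimals $\eps_{\length(\mu)}$; neither is difficult, but both warrant explicit verification.
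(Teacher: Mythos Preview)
Your direct argument is correct. The two inclusions are handled exactly as they should be: the forward inclusion uses that $P_\lambda$ is a sum of squares vanishing precisely on $L_\lambda$, so the infinitesimal bound $P_\lambda \leq \eps_{\length(\lambda)}$ collapses to $P_\lambda = 0$ under $\lim$; the reverse inclusion correctly identifies the unique minimal $\nu$ with $\x \in \Weyl_\nu^o$, uses downward closure of $\Comp(k,d)$ to keep $\nu$ in the index set, and then compares the strictly positive real values $P_\mu(\x)$ against the infinitesimals $\eps_{\length(\mu)}$.

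By contrast, the paper does not give any argument at all: its entire proof is the single line ``Use Lemma~16.17 in \cite{BPRbook2}.'' That lemma is a general statement about how $\lim_\eps$ interacts with closed bounded semi-algebraic sets defined using infinitesimal thickenings, and the present proposition is an instance of it. Your approach is therefore not so much a different route as an unpacking of what the cited lemma would yield in this concrete situation; what you gain is self-containment and transparency about why the particular polynomials $P_\lambda$ (being sums of squares cutting out $L_\lambda$) make the argument work, at the cost of a few paragraphs that the paper chose to outsource.
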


\begin{proof}
Use Lemma 16.17 in \cite{BPRbook2}.
\end{proof}

\begin{proposition}
\label{prop:empty-intersection}
\begin{enumerate}
\item
\label{itemlabel:prop:empty-intersection:1}
Let $k,d \in \Z_{\geq 0}$, and $\lambda,\mu \in \Comp(k,d)$ such that $\lambda \not\prec \mu, \mu \not\prec \lambda$. Then, 
\[
\Ext(\widetilde{\Weyl}_\lambda, \R_{\ell})  \cap \Ext(\widetilde{\Weyl}_{\mu},\R_{\ell}) = \emptyset,
\]
where $\ell = \max(\length(\lambda),\length(\mu))$.
\item
\label{itemlabel:prop:empty-intersection:2}
More generally,
let $\kk,\dd \in \Z_{\geq 0}^\omega$, and $\boldlambda,\boldmu \in \bComp(\kk,\dd)$ such that $\boldlambda \not\prec \boldmu, \boldmu \not\prec \boldlambda$. Then, 
\[
\Ext(\widetilde{\bWeyl}_{\boldlambda}, \R_{\ell})  \cap \Ext(\widetilde{\bWeyl}_{\boldmu},\R_{\ell}) = \emptyset,
\]
where $\ell = \max(\length(\boldlambda),\length(\boldmu))$.
\end{enumerate}
\end{proposition}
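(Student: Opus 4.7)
The plan is to argue by contradiction: suppose there exists $\x$ in both $\Ext(\widetilde{\Weyl}_\lambda, \R_\ell)$ and $\Ext(\widetilde{\Weyl}_{\mu}, \R_\ell)$ for incomparable $\lambda, \mu \in \Comp(k,d)$; then the defining inequalities of $\widetilde{\Weyl}_\lambda$ and $\widetilde{\Weyl}_\mu$ will yield contradictory bounds on a common auxiliary polynomial $P_\nu$, where $\nu$ is the meet of $\lambda$ and $\mu$ in the poset. The contradiction is forced by the Puiseux ordering $\eps_0 \gg \eps_1 \gg \cdots \gg \eps_\ell$.

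First, apply Proposition \ref{prop:closure-compcat} to obtain $\nu \in \Comp(k,d)$ with $\Weyl_\nu = \Weyl_\lambda \cap \Weyl_\mu$, so $\nu \prec \lambda$ and $\nu \prec \mu$. Since $\lambda \not\prec \mu$ and $\mu \not\prec \lambda$, these containments are strict, i.e.\ $\nu \neq \lambda$ and $\nu \neq \mu$. Because $\length$ equals the affine dimension of $\Weyl_{(\cdot)}$, we get $\length(\nu) < \length(\lambda)$ and $\length(\nu) < \length(\mu)$, so $\eps_{\length(\nu)} \gg \eps_{\length(\lambda)}, \eps_{\length(\mu)}$ in $\R_\ell$. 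The clause of $\widetilde{\Weyl}_\lambda$ corresponding to $\nu$ (valid since $\nu \prec \lambda$, $\nu \neq \lambda$) forces $P_\nu(\x) \geq \eps_{\length(\nu)}$, while the ``core'' clauses of $\widetilde{\Weyl}_\lambda$ and $\widetilde{\Weyl}_\mu$ give $P_\lambda(\x) \leq \eps_{\length(\lambda)}$ and $P_\mu(\x) \leq \eps_{\length(\mu)}$.

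The heart of the argument is then to bound $P_\nu(\x)$ from above in terms of $P_\lambda(\x) + P_\mu(\x)$ by a constant depending only on $k$. The block partition of $\nu$ is the join (in the partition lattice of $\{1,\ldots,k\}$) of the block partitions of $\lambda$ and $\mu$, so any two indices $i, j$ in the same block of $\nu$ are connected by a chain $i = i_0, i_1, \ldots, i_r = j$ with $r \leq k$, where each consecutive pair $(i_s, i_{s+1})$ lies in a common block of either $\lambda$ or $\mu$ and hence satisfies $(X_{i_s}-X_{i_{s+1}})^2(\x) \leq P_\lambda(\x) + P_\mu(\x)$. Cauchy--Schwarz yields
\[
(X_i - X_j)^2(\x) \;\leq\; r \sum_{s=0}^{r-1} (X_{i_s} - X_{i_{s+1}})^2(\x) \;\leq\; k^2\,(P_\lambda(\x) + P_\mu(\x)),
\]
and summing over the at most $k^2$ pairs that appear in $P_\nu$ gives $P_\nu(\x) \leq k^4(\eps_{\length(\lambda)} + \eps_{\length(\mu)})$, which is strictly less than $\eps_{\length(\nu)}$ in the ordering of $\R_\ell$. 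This contradicts $P_\nu(\x) \geq \eps_{\length(\nu)}$ and establishes Part \eqref{itemlabel:prop:empty-intersection:1}.

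Part \eqref{itemlabel:prop:empty-intersection:2} reduces to Part \eqref{itemlabel:prop:empty-intersection:1} by applying the same strategy componentwise: use Proposition \ref{prop:closure-compcat} to form $\boldnu \in \bComp(\kk,\dd)$ with $\bWeyl_{\boldnu} = \bWeyl_{\boldlambda} \cap \bWeyl_{\boldmu}$; incomparability of $\boldlambda, \boldmu$ forces $\nu^{(i)} \neq \lambda^{(i)}$ for some $i$ and $\nu^{(j)} \neq \mu^{(j)}$ for some $j$, so $\length(\boldnu) < \length(\boldlambda)$ and $\length(\boldnu) < \length(\boldmu)$; since $P_{\boldnu} = \sum_i P_{\nu^{(i)}}$ and the single-block Cauchy--Schwarz bound can be summed across blocks, the Puiseux-ordering contradiction goes through verbatim. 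I expect the main obstacle to be precisely the Cauchy--Schwarz path bound: one must identify $\nu$'s blocks as the join of those of $\lambda$ and $\mu$ and show the connecting paths have length $O(k)$, so that the resulting polynomial-in-$k$ constant is dwarfed by the infinite ratio between $\eps_{\length(\nu)}$ and $\eps_{\length(\lambda)}, \eps_{\length(\mu)}$.
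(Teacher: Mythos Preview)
Your proof is correct, and it takes a genuinely different route from the paper's. Both proofs start the same way: form $\nu$ with $\Weyl_\nu = \Weyl_\lambda \cap \Weyl_\mu$, observe $\length(\nu) < \length(\lambda),\length(\mu)$, and extract from the definition of $\widetilde{\Weyl}_\lambda$ the lower bound $P_\nu(\x) \geq \eps_{\length(\nu)}$. The divergence is in how the contradiction is produced. The paper passes to the limit $\y = \lim_{\eps_\ell}\x$ and argues that $\y \in \Weyl_\lambda \cap \Weyl_\mu = \Weyl_\nu$, whence $P_\nu(\y)=0$; on the other hand $P_\nu(\y)=\lim_{\eps_\ell}P_\nu(\x)\geq \eps_{\length(\nu)}>0$ since $\length(\nu)<\ell$, a contradiction. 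You instead stay in $\R_\ell$ and prove the explicit inequality $P_\nu \leq k^4(P_\lambda+P_\mu)$ by identifying the block partition of $\nu$ as the join of those of $\lambda$ and $\mu$, chaining indices, and applying Cauchy--Schwarz; this gives $\eps_{\length(\nu)} \leq k^4(\eps_{\length(\lambda)}+\eps_{\length(\mu)})$, which the Puiseux ordering forbids. Your argument is more elementary and entirely self-contained---it never leaves the ambient field and makes the mechanism of the contradiction quantitatively transparent---at the cost of the short combinatorial digression on path bounds. The paper's limit argument is terser but leans on the behavior of the $\lim_{\eps_\ell}$ map (and, as written, the step ``$\y\in\Weyl_\mu$'' is not completely spelled out when $\length(\mu)<\ell$). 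For Part~\eqref{itemlabel:prop:empty-intersection:2} the paper simply invokes Part~\eqref{itemlabel:prop:empty-intersection:1} and the product order; your block-wise summation of the Cauchy--Schwarz bound achieves the same reduction and is equally valid.
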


\begin{proof}
We first prove Part \eqref{itemlabel:prop:empty-intersection:1}.
Suppose that
\[
\Ext(\widetilde{\Weyl}_\lambda, \R_{\ell})  \cap \Ext(\widetilde{\Weyl}_{\mu},\R_{\ell}) \neq \emptyset,
\]
and $\x \in \Ext(\widetilde{\Weyl}_\lambda, \R_{\ell})  \cap \Ext(\widetilde{\Weyl}_{\mu},\R_{\ell})$. This implies, using Definition
\ref{def:tilde-W-lambda}  that

\begin{eqnarray}
\label{eqn:prop:empty-intersection:1}
P_\nu(\x) &\geq& \eps_{\length(\nu)},
\end{eqnarray}
where $\nu \in \Comp(k,d)$ is characterized by  $\Weyl_\nu = \Weyl_\lambda \cap \Weyl_\mu$.

Note that, since $\lambda,\mu$ are not comparable by hypothesis, $\nu\neq\lambda,\mu$,
and hence 
\begin{eqnarray}
\label{eqn:prop:empty-intersection:2}
\ell &>& \length(\nu).
\end{eqnarray}
Let $\y \in \lim_{\eps_\ell} \x$.
Then, $\y \in \Weyl_\lambda \cap \Weyl_\mu =\Weyl_\nu$, and hence

\begin{eqnarray}
\label{eqn:prop:empty-intersection:3}
P_\nu(\y) &=& 0.
\end{eqnarray}

On the other hand,
\begin{eqnarray*}
P_\mu(\y) &=& P_\mu(\lim_{\eps_\ell}(\x))\\
&=& \lim_{\eps_\ell} P_\mu(\x) \\
&=& \lim_{\eps_\ell} \eps_{\length(\mu)} \mbox{ (using \eqref{eqn:prop:empty-intersection:1})} \\
&\neq& 0 \mbox{ (since $\ell > \length(\mu)$ by \eqref{eqn:prop:empty-intersection:2},  which implies that
$\eps_{\length(\mu)} \gg \eps_\ell$)}.
\end{eqnarray*}
 This contradicts \eqref{eqn:prop:empty-intersection:3}, which finishes the proof.
 
 Part \eqref{itemlabel:prop:empty-intersection:2} follows immediately from
 Part \eqref{itemlabel:prop:empty-intersection:1} and the definition of the partial order on
 $\bComp(\kk,\dd)$ resulting from the restriction of the one on $\bComp(\kk)$ (cf. Definition  \ref{def:poset-compcat}).
\end{proof}

\begin{corollary}
\label{cor:chains}
Let $k,d \in \Z_{\geq 0}$, and $\Lambda  \subset \Comp(k,d)$. Then 
\[
\bigcap_{\lambda \in \Lambda} \widetilde{\Weyl}_\lambda \neq \emptyset
\] 
only if the elements
of $\Lambda$ form a chain in $\Comp(k,d)$. 

More generally,
let $\kk,\dd \in \Z_{\geq 0}^\omega$, and $\boldLambda  \subset \bComp(\kk,\dd)$. Then 
\[
\bigcap_{\boldlambda \in \boldLambda} \widetilde{\bWeyl}_{\boldlambda} \neq \emptyset
\] 
only if the elements
of $\boldLambda$ form a chain in $\bComp(\kk,\dd)$. 

\end{corollary}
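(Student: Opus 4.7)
The plan is to argue by contrapositive and reduce directly to the pairwise statement already established in Proposition \ref{prop:empty-intersection}. Suppose $\Lambda \subset \Comp(k,d)$ does not form a chain in the poset $\Comp(k,d)$ with respect to $\prec$. Then, by definition of a chain, there must exist two elements $\lambda, \mu \in \Lambda$ that are not comparable, i.e., $\lambda \not\prec \mu$ and $\mu \not\prec \lambda$.

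Setting $\ell = \max(\length(\lambda), \length(\mu))$, Part \eqref{itemlabel:prop:empty-intersection:1} of Proposition \ref{prop:empty-intersection} gives
\[
\Ext(\widetilde{\Weyl}_\lambda,\R_\ell) \cap \Ext(\widetilde{\Weyl}_\mu,\R_\ell) = \emptyset.
\]
The one subtle point is that $\widetilde{\Weyl}_\lambda$ and $\widetilde{\Weyl}_\mu$ are defined over different real closed fields ($\R_{\length(\lambda)}$ and $\R_{\length(\mu)}$), so in order to intersect them one must first extend both to a common overfield. Extending both to $\R_{\length(\max(\Lambda'))}$, where $\Lambda'$ is any finite subset of $\Comp(k,d)$ containing $\lambda$ and $\mu$ (for instance, $\Lambda$ itself), and using the fact that extension of scalars preserves emptiness of semi-algebraic sets defined by formulas with coefficients in the smaller field, I conclude that the pairwise intersection over the larger field is still empty. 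Hence $\bigcap_{\nu \in \Lambda} \widetilde{\Weyl}_\nu = \emptyset$, which proves the contrapositive.

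The multi-block statement follows in the same way from Part \eqref{itemlabel:prop:empty-intersection:2} of Proposition \ref{prop:empty-intersection} applied to any incomparable pair $\boldlambda, \boldmu \in \boldLambda$: such a pair exists whenever $\boldLambda$ fails to be a chain in $\bComp(\kk,\dd)$, and then their extensions to the common field $\R_{\max(\length(\boldlambda),\length(\boldmu))}$ have empty intersection, forcing the total intersection to be empty as well.

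There is no real obstacle here; the only thing to be careful about is the bookkeeping of the various real closed fields $\R_i = \R\la \eps_0,\ldots,\eps_i\ra$ so that the pairwise empty-intersection statement from Proposition \ref{prop:empty-intersection} can be applied to the full collection $\Lambda$ (resp.\ $\boldLambda$) in a single common extension field. Once this is set up, the corollary is an immediate consequence of the pairwise result.
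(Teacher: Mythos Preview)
Your proof is correct and follows the same approach as the paper, which simply writes ``Immediate from Proposition \ref{prop:empty-intersection}.'' You have merely spelled out the contrapositive and the field-extension bookkeeping that the paper leaves implicit.
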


\begin{proof}
Immediate from Proposition \ref{prop:empty-intersection}.
\end{proof}

\begin{proposition}
\label{prop:homotopic}
Let $k,d \in \Z_{\geq 0}$, $\sigma \in \Sigma_{k,d}$  a non-empty chain, and $S \subset \R^k$ a closed and bounded semi-algebraic set. Let $\lambda = \max(\sigma)$, and $\ell = \length(\lambda)$.
Then, 
for any field of coefficients $\F$,
\[
\HH^*(\Ext(L_\lambda,\R_\ell)  \cap \widetilde{S}_\sigma,\F) \cong 
\HH^*(\widetilde{S}_\sigma,\F).
\]

More generally,
let $\kk,\dd \in \Z_{\geq 0}^\omega$, $k = |\kk|$, $\boldsigma \in \boldSigma_{\kk,\dd}$  a non-empty chain, and $S \subset \R^k$ a closed and bounded semi-algebraic set. Let $\boldlambda = \max(\boldsigma)$, and $\ell = \length(\boldlambda)$.
Then, 
for any field of coefficients $\F$,
\[
\HH^*(\Ext(L_{\boldlambda},\R_\ell)  \cap \widetilde{S}_{\boldsigma},\F) \cong 
\HH^*(\widetilde{S}_{\boldsigma},\F).
\]
\end{proposition}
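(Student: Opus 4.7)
\medskip
\noindent\textbf{Proof proposal for Proposition \ref{prop:homotopic}.}

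The plan is to construct a semi-algebraic deformation retraction of $\widetilde{S}_\sigma$ onto $\Ext(L_\lambda,\R_\ell) \cap \widetilde{S}_\sigma$; this immediately gives the claimed isomorphism in cohomology. The geometric picture is that $\widetilde{S}_\sigma$ is an infinitesimal tubular neighborhood of its central slice $\Ext(L_\lambda,\R_\ell) \cap \widetilde{S}_\sigma$, and such tubes retract to their central slice.

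\emph{Step 1: unpacking the defining conditions of $\widetilde{\Weyl}_\sigma$.} Writing $\sigma = (\sigma_1 \prec \cdots \prec \sigma_m = \lambda)$, the constraint $P_{\sigma_i} \leq \eps_{\length(\sigma_i)}$ coming from $\widetilde{\Weyl}_{\sigma_i}$ combines with the constraint $P_{\sigma_i} \geq \eps_{\length(\sigma_i)}$ coming from each $\widetilde{\Weyl}_{\sigma_j}$ with $j>i$ (which is licit since $\sigma_i \prec \sigma_j$, $\sigma_i \neq \sigma_j$) to force $P_{\sigma_i}(\x) = \eps_{\length(\sigma_i)}$ for $i=1,\dots,m-1$. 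On top of this, $\x \in \widetilde{\Weyl}_\sigma$ requires $P_\lambda(\x) \leq \eps_\ell$ and $P_\mu(\x) \geq \eps_{\length(\mu)}$ for every $\mu \prec \lambda$ with $\mu \notin \sigma$. Since $P_\lambda$ is a positive definite quadratic form on $L_\lambda^\perp$ vanishing on $L_\lambda$, decomposing $\x = \x_\parallel + \x_\perp$ with $\x_\parallel \in L_\lambda$, $\x_\perp \in L_\lambda^\perp$, the tube condition forces $\|\x_\perp\| = O(\sqrt{\eps_\ell})$, so $\widetilde{S}_\sigma$ lies infinitesimally close to $L_\lambda$.

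\emph{Step 2: the ambient homotopy.} The natural candidate is the scaling $H(\x,t) = \x_\parallel + t\x_\perp$ for $t \in [0,1]_{\R_\ell}$. The tube condition is preserved since $P_\lambda(H(\x,t)) = t^2 P_\lambda(\x) \leq \eps_\ell$. For the equality constraints, the polar expansion
\[
P_{\sigma_i}(\x_\parallel + t\x_\perp) = P_{\sigma_i}(\x_\parallel) + 2t\,C_{\sigma_i}(\x_\parallel,\x_\perp) + t^2 P_{\sigma_i}(\x_\perp)
\]
has cross and pure-$\x_\perp$ terms of infinitesimal order strictly smaller than $\eps_{\length(\sigma_i)}$, so a small time-dependent correction $\x_\parallel \rightsquigarrow \x_\parallel + \delta(t,\x)$ with $\delta(t,\x) \in L_\lambda$ infinitesimal, supplied by the semi-algebraic implicit function theorem applied to the equations $P_{\sigma_i}(\cdot) = \eps_{\length(\sigma_i)}$, makes all the equality constraints hold exactly along the corrected path. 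The relevant non-degeneracy of the gradient system on $L_\lambda$ is a transversality statement for the strata of the Weyl chamber that is built into the Arnold--Giventhal picture (Theorems \ref{thm:arnold} and \ref{thm:Weyl-diffeo}); the inequalities $P_\mu \geq \eps_{\length(\mu)}$ for $\mu \notin \sigma$ are preserved by continuity, since the perturbations are of strictly higher infinitesimal order than the gaps $P_\mu(\x) - \eps_{\length(\mu)}$.

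\emph{Step 3: staying inside $\Ext(S,\R_\ell)$.} The main obstacle is that $S$ is only assumed closed, so an infinitesimal straight-line perturbation of a point of $\Ext(S,\R_\ell)$ need not remain in $\Ext(S,\R_\ell)$. I would overcome this by invoking Hardt's semi-algebraic triviality theorem applied to $\pi_\lambda: S \to \pi_\lambda(S)$: after refining by a semi-algebraic triangulation of $S$ adapted to $S \cap L_\lambda$, one obtains local semi-algebraic trivializations $\pi_\lambda^{-1}(U) \cap S \cong U \times F$ over each cell $U$ of $S \cap L_\lambda$, which give a semi-algebraic retraction of a tubular neighborhood of $S \cap L_\lambda$ in $S$ onto $S \cap L_\lambda$. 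Transferring this structure to $\R_\ell$ via the Tarski--Seidenberg principle and composing with the ambient homotopy of Step 2 yields the desired semi-algebraic deformation retraction of $\widetilde{S}_\sigma$ onto $\Ext(L_\lambda,\R_\ell) \cap \widetilde{S}_\sigma$, and hence the claimed cohomology isomorphism. The multi-block case follows by applying the single-block argument in each of the $\omega$ factors of the product decompositions $\bWeyl^{(\kk)} = \prod_i \Weyl^{(k_i)}$ and $L_{\boldlambda} = \prod_i L_{\lambda^{(i)}}$, where $P_{\boldlambda} = \sum_i P_{\lambda^{(i)}}$ splits compatibly and the deformation retraction is constructed factorwise.
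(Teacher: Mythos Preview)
The paper's proof is a one-line citation of Lemma~16.17 in \cite{BPRbook2}.  The observation that makes that citation work---and that your argument never isolates---is that among all the constraints cutting out $\widetilde{S}_\sigma$, the infinitesimal $\eps_\ell$ occurs \emph{only} in the single inequality $P_\lambda \le \eps_\ell$ coming from $\widetilde{\Weyl}_\lambda$.  Every other condition (membership in $S$, in $\Weyl^{(k)}$, the inequalities $P_\mu \ge \eps_{\length(\mu)}$ for $\mu\prec\lambda$, and the constraints from $\widetilde{\Weyl}_{\sigma_i}$ for $i<m$) involves at most $\eps_0,\dots,\eps_{\ell-1}$ and is therefore already defined over $\R_{\ell-1}$.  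So one may write
\[
\widetilde{S}_\sigma \;=\; \{\x\in \Ext(T,\R_\ell): P_\lambda(\x)\le \eps_\ell\},\qquad
\Ext(L_\lambda,\R_\ell)\cap\widetilde{S}_\sigma \;=\; \Ext(T\cap L_\lambda,\R_\ell),
\]
for a single closed bounded semi-algebraic set $T\subset \R_{\ell-1}^k$, and the cited lemma (a local-conic-structure/limit result for closed bounded semi-algebraic sets under an infinitesimal thickening) gives the retraction in one shot, with $S$, the Weyl-chamber walls, and all lower-level constraints absorbed into $T$.

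Your route is genuinely different: you try to build the retraction by hand, treating the $P_{\sigma_i}$-equalities and the $S$-membership separately.  The gap is that Steps~2 and~3 produce two different deformations and you never show they can be merged into one.  The Hardt trivialization of $\pi_\lambda\colon S\to\pi_\lambda(S)$ yields a retraction of a tube in $S$ onto $S\cap L_\lambda$, but that retraction has no reason to preserve the level sets $P_{\sigma_i}=\eps_{\length(\sigma_i)}$; conversely, the implicit-function correction $\delta(t,\x)$ of Step~2 is tailored to the linear scaling $\x_\parallel+t\x_\perp$ and need not keep points inside $S$ once you replace that scaling by the Hardt flow.  ``Composing'' the two, as you propose, does not automatically give a homotopy respecting all constraints at once.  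Moreover, the non-degeneracy you need for the implicit-function step (linear independence on $L_\lambda$ of the gradients of the $P_{\sigma_i}$ at every point of $\widetilde{S}_\sigma$) is asserted but not proved; the appeal to the ``Arnold--Giventhal picture'' is not a substitute for checking transversality of the nested cylinders $\{P_{\sigma_i}|_{L_\lambda}=\eps_{\length(\sigma_i)}\}$.  The fix is exactly the observation above: bundle everything except $P_\lambda\le\eps_\ell$ into a single $T$ over $\R_{\ell-1}$ and apply one retraction lemma, rather than trying to coordinate several.
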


\begin{proof}
Use Lemma 16.17 in \cite{BPRbook2}.
\end{proof}

\begin{proposition}
\label{prop:MV}
\begin{enumerate}[1.]
\item
\label{itemlabel:prop:MV:1}
Let $k,d \in \Z_{\geq 0}, d> 1$, and $S$ a symmetric, $\mathcal{P}$-closed, and bounded semi-algebraic subset of $\R^k$, 
where $\mathcal{P} \subset \R[X_1,\ldots,X_k]^{\mathfrak{S}_k}_{\leq d}$.
Then, 
\begin{eqnarray*}
b(S/\mathfrak{S}_k,\F) \leq \sum_{\sigma \in \Sigma_{k,d}} b(\widetilde{S}_\sigma,\F).
\end{eqnarray*}

\item
\label{itemlabel:prop:MV:2}
More generally, let $\kk,\dd \in \Z_{\geq 0}^\omega$, $k = |\kk|$, and $S$ a symmetric, $\mathcal{P}$-closed, and bounded semi-algebraic subset of $\R^k$, 
where $\mathcal{P} \subset \R[\X^{(1)},\ldots,\X^{(k_\omega)}]^{\mathfrak{S}_\kk}_{\leq \dd}$.
Then, 
\begin{eqnarray*}
b(S/\mathfrak{S}_\kk,\F) \leq \sum_{\boldsigma \in \boldSigma_{\kk,\dd}} b(\widetilde{S}_{\boldsigma},\F).
\end{eqnarray*}
\end{enumerate}
\end{proposition}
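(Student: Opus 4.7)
The plan is to combine Proposition \ref{prop:arnold} (which identifies $S/\mathfrak{S}_k$ homologically with $S_{k,d}$), Proposition \ref{prop:limit} (which exhibits $S_{k,d}$ as the specialization of the thickened set $\widetilde{S}_{k,d}$), and Corollary \ref{cor:chains} (which says that intersections of the $\widetilde{\Weyl}_\lambda$'s indexed by non-chains are empty), glued together by the Mayer-Vietoris inequality of Part \eqref{itemlabel:prop:prop1:1} of Proposition \ref{prop:prop1}.

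First I would invoke Part \eqref{itemlabel:prop:arnold:c} of Proposition \ref{prop:arnold} to reduce to $b(S/\mathfrak{S}_k,\F) = b(S_{k,d},\F)$. Next, by Proposition \ref{prop:limit}, $\lim_{\eps_0} \widetilde{S}_{k,d} = S_{k,d}$, and a standard semi-algebraic argument (invoking Lemma 16.17 of \cite{BPRbook2}, together with closedness and boundedness of $S_{k,d}$) gives
\[
b(S_{k,d}, \F) \leq b(\widetilde{S}_{k,d}, \F).
\]

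Now, writing $\widetilde{S}_{k,d} = \bigcup_{\lambda \in \Comp(k,d)} \Ext(\widetilde{S}_\lambda, \R_{d'})$ and applying the Mayer-Vietoris inequality from Part \eqref{itemlabel:prop:prop1:1} of Proposition \ref{prop:prop1}, for each $i \geq 0$
\[
b^i(\widetilde{S}_{k,d}, \F) \leq \sum_{j \geq 1} \sum_{\substack{\Lambda \subset \Comp(k,d) \\ \card(\Lambda) = j}} b^{i-j+1}\Bigl(\bigcap_{\lambda \in \Lambda} \Ext(\widetilde{S}_\lambda, \R_{d'}), \F\Bigr).
\]
By Corollary \ref{cor:chains}, the intersection on the right is empty unless $\Lambda$ forms a chain, so only $\Lambda = \sigma \in \Sigma_{k,d}$ contribute; summing over $i$ (using $b^{i-j+1} = 0$ for $i < j-1$) and reindexing gives
\[
b(\widetilde{S}_{k,d}, \F) \leq \sum_{\sigma \in \Sigma_{k,d}} b(\widetilde{S}_\sigma, \F),
\]
which, chained with the two comparisons above, proves Part \eqref{itemlabel:prop:MV:1}.

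Part \eqref{itemlabel:prop:MV:2} is proved along identical lines, replacing $\Comp(k,d)$ by $\bComp(\kk,\dd)$ throughout, and using the multi-block statements of Propositions \ref{prop:arnold} and \ref{prop:limit} and of Corollary \ref{cor:chains}. The only step requiring any real work is the justification of the comparison $b(S_{k,d}, \F) \leq b(\widetilde{S}_{k,d}, \F)$ through the infinitesimal limit; once Corollary \ref{cor:chains} is in hand, everything else is bookkeeping with the Mayer-Vietoris bound.
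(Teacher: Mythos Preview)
Your proposal is correct and follows essentially the same route as the paper: invoke Proposition~\ref{prop:arnold} and Proposition~\ref{prop:limit} to pass from $S/\mathfrak{S}_k$ to $\widetilde{S}_{k,d}$, then apply the Mayer--Vietoris inequality (Part~\eqref{itemlabel:prop:prop1:1} of Proposition~\ref{prop:prop1}) together with Corollary~\ref{cor:chains} to restrict the sum to chains. The only cosmetic difference is that the paper asserts an isomorphism $\HH^*(\Ext(S,\R_{d'})/\mathfrak{S}_k,\F) \cong \HH^*(\widetilde{S}_{k,d},\F)$ directly (implicitly via Lemma~16.17 of \cite{BPRbook2}), whereas you record only the inequality $b(S_{k,d},\F) \leq b(\widetilde{S}_{k,d},\F)$; either suffices here.
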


\begin{proof}
\noindent
Proof of Part \eqref{itemlabel:prop:MV:1}:
It follows from Part \eqref{itemlabel:prop:arnold:c} of Proposition \ref{prop:arnold} and  Proposition \ref{prop:limit}, that
\[
\HH^*(\Ext(S,\R_d)/\mathfrak{S}_k,\F) \cong \HH^*(\widetilde{S}_{k,d},\F).
\]
Now, 
\[
\widetilde{S}_{k,d}= \bigcup_{\lambda \in \Comp(k,d)} \widetilde{S}_\lambda.
\]

It follows from Part   \eqref{itemlabel:prop:prop1:1} of Proposition \ref{prop:prop1} (Mayer-Vietoris inequality) and Corollary  \ref{cor:chains} that for every $m, 0 \leq m < d$,
\begin{eqnarray*}
b^m(\widetilde{S}_{k,d},\F) &\leq & \sum_{p=0}^{m} \sum_{\substack{\sigma \in \Sigma_{k,d},\\ \card(\sigma) = p+1}} b^{m-p}(\widetilde{S}_\sigma,\F).
\end{eqnarray*} 

Part \eqref{itemlabel:prop:MV:1} of Proposition follows by taking a sum over all $m,0\leq m < d$.

The proof of Part \eqref{itemlabel:prop:MV:2} is similar and omitted.
\end{proof}

 \begin{proof}[Proof of Theorem \ref{thm:bound}]
 Suppose that $S$ is defined by a $\mathcal{P}$-closed formula $\Phi$. 
 We first replace $\R$ by $\R' = \R\la\eps_0\ra$,  and replace $S$ by the $\mathcal{P}'$-closed  
 semi-algebraic set $S'$ defined by the $\mathcal{P}'$-closed formula
 \[
 \Phi \wedge (\eps_0 ||\X||^2 -1 \leq 0).
 \]
  
 Then, using the conical structure theorem for semi-algebraic sets \cite{BPRbook2},  we have
 that,
 \begin{enumerate}[(i)]
 \item
 $S'$ is symmetric, closed and bounded over $\R'$;
 \item
 \begin{eqnarray}
 \label{eqn:thm:bound:S'}
 b^i(S/\mathfrak{S}_k,\F) &=& b^i(S'/\mathfrak{S}_k,\F).
 \end{eqnarray}
 \end{enumerate}
 
 We now obtain an upper bound  $b(\widetilde{S}'_\sigma,\F)$ for each chain $\sigma \in \Sigma_{k,d}$ as follows.
 Using Proposition \ref{prop:homotopic} we have that
 \[
 b(\widetilde{S}'_\sigma,\F) = b(\Ext(L_\lambda,\R'_\ell)  \cap \widetilde{S}'_\sigma,\F),
 \]
 where $\lambda = \max(\sigma)$ and $\ell = \length(\lambda)$.
 Notice that $\widetilde{S}'_\sigma$ is the intersection of the $\mathcal{P}'$-closed semi-algebraic set $S'$, with  the basic closed semi-algebraic set,
defined by 
 \begin{eqnarray}
 \nonumber
P_\mu - \eps_{\length(\mu)}  &=& 0, \mbox{ for $\mu \in \sigma, \mu \neq \lambda$}, \\
\label{eqn:number-of-distinct}
P_\nu - \eps_{\length(\nu)}  &\leq& 0, \nu \not\in \sigma, \nu \prec \lambda.
  \end{eqnarray}
 
 Let
 \[
 \mathcal{F}_\sigma = \bigcup_{\mu \in \sigma, \mu \neq \lambda} \{P_\mu - \eps_{\length(\mu)}\}, \;
 \mathcal{G}_\sigma =  \bigcup_{\nu \not\in \sigma, \nu \prec \lambda}  \{P_\nu - \eps_{\length(\nu)} \}.
 \]
 
Using Corollary \ref{cor:chains}, the number of distinct subsets $\mathcal{G}_\sigma'\subset \mathcal{G}_\sigma$, such that 
 \[
 \ZZ(\mathcal{F}_\sigma \cup \mathcal{G}_\sigma',\R'_\ell) \cap \Ext(\Weyl^{(k)},\R'_\ell) \neq \emptyset
 \] 
 is bounded by 
 \begin{equation}
 \label{eqn:distinct-nonempty}
 (O(d'))^{d'}.
 \end{equation}

We obtain using Proposition \ref{prop:P-closed-main} that
\begin{eqnarray*}
b(\widetilde{S}'_\sigma,\F) &\leq & 
s+\sum_{ p \geq 0} \sum_{\substack{ I \subset [1,s],\\ 1 \leq \card(I) \leq k-p, \\
     J \subset I,\\ 1\leq \card(J) \leq p+1}} \sum_{\tau \in \{0,\pm 1,\pm 2\}^J} 
     \sum_{\mathcal{G}_\sigma'\subset \mathcal{G}_\sigma}  G(p,\card(I),J,K,\tau),
     \end{eqnarray*}
     where
     \[
     G(p,q,J,K,\tau)
     =
     b^{p+q -\card(J)} (\Ext(L_\lambda,\R'_\ell) \cap \ZZ(\mathcal{P}_\tau \cup \mathcal{F}_\sigma \cup \mathcal{G}_\sigma',\R'_\ell) \cap \Ext(\Weyl^{(k)},\R'_\ell),\F),
     \]
and $\mathcal{P}_\tau$ is as in \eqref{eqn:P-tau}.

Since $\dim(L_\lambda) = \length(\lambda) \leq d'$, we obtain using
\eqref{eqn:distinct-nonempty}, Proposition \ref{prop:inductive},
and
Corollary \ref{cor:betti-bound-sa-general},
that,
 \begin{eqnarray}
  \label{eqn:bound:1} 
b(\widetilde{S}_\sigma',\F)  &\leq &  (O(s d d'))^{d'}.
  \end{eqnarray}
  The theorem now follows from \eqref{eqn:thm:bound:S'}, 
 Propositions \ref{prop:chainsCompcat},  \ref{prop:MV}, and
  \eqref{eqn:bound:1}.
 
 \end{proof}
 
\subsection{Proof of Theorem \ref{thm:bound-general}}
\begin{proof}[Proof of Theorem \ref{thm:bound-general}]
The proof is very similar to that of Theorem \ref{thm:bound}.
Suppose that $S$ is defined by a $\mathcal{P}$-closed formula $\Phi$. 
 We first replace $\R$ by $\R' = \R\la\eps_0\ra$,  and replace $S$ by the $\mathcal{P}'$-closed  
 semi-algebraic set $S'$ defined by the $\mathcal{P}'$-closed formula
 \[
 \Phi \wedge (\eps_0 ||\X||^2 -1 \leq 0).
 \]
  
 Then, using the conical structure theorem for semi-algebraic sets \cite{BPRbook2},  we have
 that,
 \begin{enumerate}[i]
 \item
 $S'$ is symmetric, closed and bounded over $\R'$;
 \item
 \begin{eqnarray}
 \label{eqn:thm:bound:S'-general}
 b^i(S/\mathfrak{S}_\kk,\Z_2) &=& b^i(S'/\mathfrak{S}_\kk,\Z_2).
 \end{eqnarray}
 \end{enumerate}
 
 We now obtain an upper bound  $b(\widetilde{S}'_{\boldsigma},\Z_2)$ for each chain $\boldsigma \in \boldSigma_{\kk,\dd}$ as follows.
 Using Proposition \ref{prop:homotopic} we have that
 \[
 b(\widetilde{S}'_{\boldsigma},\Z_2) = b(\Ext(L_{\boldlambda},\R'_\ell)  \cap \widetilde{S}'_{\boldsigma},\Z_2),
 \]
 where $\boldlambda = \max(\boldsigma)$ and $\ell = \length(\boldlambda)$.

 Notice that $\widetilde{S}'_{\boldsigma}$ is the intersection of the $\mathcal{P}'$-closed semi-algebraic set $S$, with  the basic closed semi-algebraic set,
defined by 
 \begin{eqnarray}
 \nonumber
P_{\boldmu} - \eps_{\length(\boldmu)}  &=& 0, \mbox{ for $\boldmu \in \boldsigma, \boldmu \neq \boldlambda$}, \\
\label{eqn:number-of-distinct-general}
P_{\boldnu} - \eps_{\length(\boldnu)}  &\leq& 0, \boldnu \not\in \boldsigma, \boldnu \prec \boldlambda.
  \end{eqnarray}
 
 Let
 \[
 \mathcal{F}_{\boldsigma} = \bigcup_{\boldmu \in \boldsigma, \boldmu \neq \boldlambda} \{P_{\boldmu} - \eps_{\length(\boldmu)}\},
 \mathcal{G}_{\boldsigma} =  \bigcup_{\boldnu \not\in \boldsigma, \boldnu \prec \boldlambda}  \{P_{\boldnu} - \eps_{\length(\boldnu)} \}.
 \]
 
Using Corollary \ref{cor:chains}, the number of distinct subsets $\mathcal{G}_{\boldsigma}'\subset \mathcal{G}_{\boldsigma}$, such that 
 \[
 \ZZ(\mathcal{F}_{\boldsigma} \cup \mathcal{G}_{\boldsigma}',\R'_\ell) \cap \Ext(\Weyl^{(k)},\R'_\ell) \neq \emptyset
 \] 
 is bounded by 
 \begin{equation}
 \label{eqn:distinct-nonempty-general}
 \prod_{i=1}^{\omega}(O(d_i'))^{d_i'}.
 \end{equation}

We obtain using Proposition \ref{prop:P-closed-main} that
\begin{eqnarray*}
b(\widetilde{S}'_{\boldsigma},\F) &\leq & 
s+\sum_{ p \geq 0} \sum_{\substack{ I \subset [1,s],\\ 1 \leq \card(I) \leq k-p, \\
     J \subset I,\\ 1\leq \card(J) \leq p+1}} \sum_{\tau \in \{0,\pm 1,\pm 2\}^J} 
     \sum_{\mathcal{G}_{\boldsigma}'\subset \mathcal{G}_{\boldsigma}}  G(p,\card(I),J,K,\tau),
     \end{eqnarray*}
     where
    \[
     G(p,q,J,K,\tau)
     =
     b^{p+q -\card(J)} (\Ext(L_{\boldlambda},\R'_\ell) \cap \ZZ(\mathcal{P}_{\sigma} \cup \mathcal{F}_{\boldsigma} \cup \mathcal{G}_{\boldsigma}',\R'_\ell) \cap \Ext(\Weyl^{(k)},\R'_\ell),\F).
     \]

Since,
\[
\dim(L_{\boldlambda}) = \length(\boldlambda) \leq \sum_{i=1}^{\omega}d_i',
\] 
we obtain using
\eqref{eqn:distinct-nonempty}, Proposition \ref{prop:inductive},
and
Corollary \ref{cor:betti-bound-sa-general},
that,
 \begin{eqnarray}
  \label{eqn:bound:1-general} 
b(\widetilde{S}_{\boldsigma}',\Z_2)  &\leq &  \prod_{i=1}^\omega (O(\omega^3 s d_i d_i'))^{d_i'}.
  \end{eqnarray}
  The theorem now follows from 
  \eqref{eqn:thm:bound:S'-general}, 
 Propositions \ref{prop:chainsCompcat},  \ref{prop:MV}, and
  \eqref{eqn:bound:1-general}.
 \end{proof}

\subsection{Proofs of Theorems \ref{thm:definable1} and \ref{thm:definable2}}
\label{subsec:proofs-of-definable}
The proofs these theorems are  adaptations of the proofs of the corresponding theorems in the semi-algebraic case. These adaptations involve replacing infinitesimal elements by appropriately small positive elements of the ground field $\R$, and Hardt's triviality theorem for semi-algebraic sets by its
o-minimal version (see for example \S 5.7 \cite[Theorem 5.22]{Michel2}), similar to those already appearing in the proofs of the main results in \cite{Basu9}.  
The notion of $\lim_\eps S$, of a semi-algebraic set defined over $\R[\eps]$ which is bounded over $\R$, is replaced in the 
definable case by the intersection of the closure of the definable set $S' \subset \R^k \times \R$ 
with the hyperplane defined by $T=0$, where  $S'$ is the definable set obtained from $S$ by replacing $\eps$ by the new variable $T$.
If $S$ belongs to a definable family, the limit of $S$ defined this way would also belong to a definable depending only on the 
first definable family.

The final ingredient in the proofs of the bounds in the semi-algebraic case is
the use of Ole{\u\i}nik and Petrovski{\u\i} type bounds (cf. Theorem \ref{thm:betti-bound-algebraic}) 
to give a bound on the Betti numbers of semi-algebraic subsets of $\R^{d'}$, defined by polynomials of degree
at most $d$ (where $d' \leq d$) (cf. Corollary \ref{cor:betti-bound-sa-general}). In the definable case we will need to use the following
replacement of Corollary \ref{cor:betti-bound-sa-general}. 


\begin{proposition}
\label{prop:OPTM-definable}
\begin{enumerate}[1.]
\item
\label{itemlabel:prop:OPTM-definable:1}
Let $V \subset \R^m$ be a closed definable set in an o-minimal structure over $\R$ and $d > 0$. Then, there exists a constant $C_{V,d} > 0$ such that for all polynomial maps 
$F = (F_1,\ldots,F_{m}): \R^{d'} \rightarrow \R^m$, with $d' \leq d$ and $\deg(F_i) \leq d, 1 \leq i \leq m$, 
\[
b(f^{-1}(V), \F) \leq C_{V,d}.
\]
\item
\label{itemlabel:prop:OPTM-definable:2}
More generally, suppose that  $V \subset \R^m \times \R^\ell$ is a closed definable set in an o-minimal structure over $\R$, and $\pi_1: \R^m \times \R^\ell \rightarrow \R^m, \pi_2: \R^m \times \R^\ell \rightarrow \R^\ell$ be the two projection maps, and  for $\y \in \R^\ell$ denote by 
$V_\y$ the definable set $\pi_1(\pi_2^{-1}(y) \cap V)$. Then for each $d > 0$, there exists a constant $C_{V,d} > 0$, such that for every finite subset $A \subset \R^\ell$, and every $\mathcal{A}$-closed set $S \subset \R^m$, where $\mathcal{A} = \cup_{\y \in A} \{V_\y\}$, 
and all polynomial maps 
$F = (F_1,\ldots,F_{m}): \R^{d'} \rightarrow \R^m$, with $d' \leq d$ and $\deg(F_i) \leq d, 1 \leq i \leq m$, 
\[
b(F^{-1}(S), \F) \leq C_{V,d} \cdot n^{d'}.
\]
\end{enumerate}
\end{proposition}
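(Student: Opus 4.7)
The plan is to reduce both parts to uniform bounds on Betti numbers in a suitably constructed definable family, making use of Hardt's triviality theorem for definable sets (\cite[Theorem 5.22]{Michel2}) and, for the second part, the Ole{\u\i}nik--Petrovski{\u\i}--Thom--Milnor type bound for $\mathcal{A}$-closed sets in a definable family given by Theorem 2.3 of \cite{Basu9}. First I would parametrize polynomial maps $F=(F_1,\ldots,F_m):\R^{d'}\to\R^m$ with $\deg F_i\leq d$ by their coefficient vectors $c\in\R^N$, where $N=m\binom{d'+d}{d}$; writing $F_c$ for the map associated to $c$, the set
\[
\widetilde V \defeq \{(x,c)\in\R^{d'}\times\R^N : F_c(x)\in V\}
\]
is definable in the given o-minimal structure, since $(x,c)\mapsto F_c(x)$ is polynomial and $V$ is definable. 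Its fiber over $c$ is exactly $F_c^{-1}(V)$, so applying Hardt triviality to the projection $\widetilde V\to\R^N$ yields only finitely many definable homeomorphism types among the fibers. Taking the maximum of $b(F_c^{-1}(V),\F)$ over these types and then over $d'\in\{1,\ldots,d\}$ produces the constant in Part \eqref{itemlabel:prop:OPTM-definable:1}.

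For Part \eqref{itemlabel:prop:OPTM-definable:2} I would enlarge the construction to incorporate the parameter $y$ of $V_y$, forming
\[
\widetilde V' \defeq \{(x,c,y)\in\R^{d'}\times\R^N\times\R^\ell : (F_c(x),y)\in V\},
\]
whose fiber over $(c,y)$ is $F_c^{-1}(V_y)$. Fixing $c$ momentarily, the restriction of $\widetilde V'$ to $\{c\}\times\R^\ell$ is a definable family of subsets of $\R^{d'}$ indexed by $y$, so Theorem 2.3 of \cite{Basu9} supplies a constant $C(c)$ such that for every finite $A\subset\R^\ell$ with $\card(A)=s$ and every $\mathcal{A}$-closed $T\subset\R^{d'}$ built from $\{F_c^{-1}(V_y)\}_{y\in A}$, one has $b(T,\F)\leq C(c)\cdot s^{d'}$. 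Specializing to $T=F_c^{-1}(S)$ is precisely the bound claimed, modulo the $c$-dependence of the constant.

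The main obstacle is to control $C(c)$ uniformly in $c$. I would resolve this by invoking Hardt triviality a second time, now for the projection $\widetilde V'\to\R^N$: this shows that as $c$ varies the induced definable family $\{F_c^{-1}(V_y)\}_{y\in\R^\ell}$ falls into only finitely many definable-equivalence classes. Since the constant in Theorem 2.3 of \cite{Basu9} depends only on the definable family up to definable equivalence, it is constant on each piece of this finite partition, and so $C(c)$ may be replaced by a uniform $C$; a final maximization over $d'\in\{1,\ldots,d\}$ delivers the promised $C_{V,d}$. The remaining ingredients --- definability of $\widetilde V$ and $\widetilde V'$ and the cited family version of the Ole{\u\i}nik--Petrovski{\u\i} inequality --- apply essentially verbatim, so I expect no genuine difficulty beyond this uniformity bookkeeping.
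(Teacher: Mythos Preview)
Your argument for Part~\eqref{itemlabel:prop:OPTM-definable:1} is correct and is exactly the paper's proof, spelled out: the paper just says ``Hardt's triviality theorem implies finiteness of topological types amongst the $F^{-1}(V)$,'' and your parametrization by coefficient vectors $c\in\R^N$ makes this precise.

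For Part~\eqref{itemlabel:prop:OPTM-definable:2} your approach is close but the second Hardt step, as written, has a gap. Applying Hardt to the projection $\widetilde V'\to\R^N$ gives, over each piece of the partition of $\R^N$, a definable homeomorphism between the fibers $\widetilde V'_c\subset\R^{d'}\times\R^\ell$. But this homeomorphism need not commute with the projection to $\R^\ell$, so it does not identify the \emph{families} $\{F_c^{-1}(V_y)\}_{y}$ and $\{F_{c'}^{-1}(V_y)\}_{y}$ as families; you only get that the total spaces are homeomorphic. Your assertion that ``the constant in Theorem~2.3 of \cite{Basu9} depends only on the definable family up to definable equivalence'' therefore needs an equivalence that Hardt, applied in this way, does not supply.

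The repair is simpler than your detour: do not fix $c$ first. The set $\widetilde V'$ is already a single definable family of subsets of $\R^{d'}$ parametrized by $(c,y)\in\R^N\times\R^\ell$. Apply Theorem~2.3 of \cite{Basu9} once to this family; the resulting constant depends only on $\widetilde V'$ (hence on $V$ and $d$), and for fixed $c$ the sets $\{F_c^{-1}(V_y)\}_{y\in A}$ are $s=\card(A)$ members of this family, giving the bound $C_{V,d}\cdot s^{d'}$ directly. The paper's own route is a minor variant: it says Part~\eqref{itemlabel:prop:OPTM-definable:2} follows from Part~\eqref{itemlabel:prop:OPTM-definable:1} together with the \emph{proof} of Theorem~2.3 in \cite{Basu9}, meaning one re-runs the Mayer--Vietoris reduction to intersections $F_c^{-1}(V_{y_{i_1}}\cap\cdots\cap V_{y_{i_j}})$ and then invokes Part~\eqref{itemlabel:prop:OPTM-definable:1} to bound each of these uniformly. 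Either corrected route works; yours, once fixed, is arguably the cleanest.
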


\begin{proof}
Part\eqref{itemlabel:prop:OPTM-definable:1} of 
the proposition is a consequence of Hardt's triviality theorem for definable maps, which implies finiteness of topological types
amongst the definable sets $F^{-1}(V)$ as $F$ ranges all polynomial maps $F = (F_1,\ldots,F_m): \R^{d'} \rightarrow \R^m$, where 
the degree of each $F_i$ is at most $d$.

Part \eqref{itemlabel:prop:OPTM-definable:2} follows from Part \eqref{itemlabel:prop:OPTM-definable:1} and the proof of Theorem 2.3 in \cite{Basu9}.
\end{proof}

We sketch below the proofs of Theorem \ref{thm:definable1} and \ref{thm:definable2} indicating only the modifications needed
from the algebraic and semi-algebraic cases.

\begin{proof}[Sketch of proof of Theorem \ref{thm:definable1}]
The proof of Part \eqref{itemlabel:thm:definable1:1} is easy. In order to prove 
Part \eqref{itemlabel:thm:definable1:2} it suffices to modify appropriately the proof of Theorem \ref{thm:vanishing} replacing the symmetric semi-algebraic set $S$ with the symmetric definable set $S' = F^{-1}(V)$. Observe that the proof of Proposition \ref{prop:vanishing} remains valid if we replace the 
symmetric semi-algebraic set $S$ with $S'$ and ``semi-algebraic'' with ``definable'', after we observe that each polynomial $F_i$ is a polynomial in $p^{(k)}_1,\ldots,p^{(k)}_d$, and hence for each $\y \in \R^d$, 
$(\Psi^{(k)}_d)^{-1}(\y)$ maps on to a unique point in $\R^m$ under $F$, and
the fibre $(\Psi^{(k)}_d)^{-1}(\y)  \cap S'$ is either empty or equal to
$(\Psi^{(k)}_d)^{-1}(\y)$, depending on whether this point belongs to $V$ or not. 
Part \eqref{itemlabel:thm:definable1:2} now follows using the same argument as in the proof of 
Theorem \ref{thm:vanishing}.

In order to prove Part \eqref{itemlabel:thm:definable1:3}, observe again that the proof of Proposition \ref{prop:arnold} remains valid if we replace the 
symmetric semi-algebraic set $S$ with $S'$ and ``semi-algebraic'' with ``definable''. After replacing
the infinitesimals $\eps_i$ by appropriately small positive elements of $\R$,  
and $S$ by $S'$, definable analogs of Propositions 
\ref{prop:limit} (replacing the appropriately the notion of $\lim_\eps$ map by a definable analog as discussed above), 
\ref{prop:homotopic}, \ref{prop:MV} all hold. Finally, in order to prove 
Part \eqref{itemlabel:thm:definable1:3}, we replace the use of Corollary \ref{cor:betti-bound-sa-general} by Part \eqref{itemlabel:prop:OPTM-definable:1} of Proposition
\ref{prop:OPTM-definable}.
\end{proof}
   
\begin{proof}[Sketch of proof of Theorem \ref{thm:definable2}]
The proof is similar to that of proof of  Theorem \ref{thm:definable1}, except we replace the use of Corollary \ref{cor:betti-bound-sa-general} by Part \eqref{itemlabel:prop:OPTM-definable:2} of Proposition
\ref{prop:OPTM-definable} instead of Part \eqref{itemlabel:prop:OPTM-definable:1}. 
\end{proof}

\subsection{Proof of Theorem \ref{thm:algorithm}}
\label{subsec:algorithm}

Before proving Theorem \ref{thm:algorithm} we will need a few preliminary results that
we list below.

\subsubsection{Algorithmic Preliminaries}
\label{subsubsec:algo-prelim}
We begin with a notation.

\begin{notation}
Let $\mathcal{P} \subset \R [X_{1} , \ldots ,X_{k} ,Y_{1} , \ldots ,Y_{\ell}
]$ be finite, and let $\Pi$ denote a partition of the list of variables $X=
(X_{1} , \ldots ,X_{k} )$ into blocks, $X_{[1]} , \ldots ,X_{[ \omega ]}$,
where the block $X_{[i]}$ is of size $k_{i} ,1 \leq i \leq \omega$, $\sum_{1
\leq i \leq \omega} k_{i} =k$.

A $( \mathcal{P} , \Pi )$-formula $\Phi (Y)$ is a formula of the form
\[ \Phi (Y) = ( \Qu_{1} X_{[1]} ) \ldots ( \Qu_{\omega} X_{[ \omega ]} ) F
   (X,Y) , \]
where $\Qu_{i} \in \{\forall , \exists\}$, $Y= (Y_{1} , \ldots ,Y_{\ell} )$,
and $F (X,Y)$ is a quantifier free $\mathcal{P}$-formula.
\end{notation}

We will  use the following definition of complexity of algorithms in keeping with the convention used in
the book \cite{BPRbook2}.
\begin{definition}[Complexity of an algorithm]
\label{def:complexity}
By complexity of an algorithm that accepts as input a finite set of polynomials with coefficients in an ordered domain $\D$,
we will mean the number of ring operations (additions and multiplications) in $\D$, as well as the number of comparisons,
used in different steps of the algorithm. 
\end{definition}

The following algorithmic result on effective quantifier elimination is well-known.
We use the version stated in \cite{BPRbook2}.

\begin{theorem}\cite[Chapter 14]{BPRbook2}
\label{thm:qe}Let $\mathcal{P}$ be a
  set of at most $s$ polynomials each of degree at most $d$ in $k+ \ell$
  variables with coefficients in a real closed field $\R$, and let $\Pi$
  denote a partition of the list of variables~{$(X_{1} , \ldots ,X_{k} )$}
  into blocks, $X_{[1]} , \ldots ,X_{[ \omega ]}$, where the block $X_{[i]}$
  has size $k_{i} , {\mbox{for }  1 \leq i \leq \omega}$. Given $\Phi (Y)$, a
  $( \mathcal{P} , \Pi )$-formula, there exists an equivalent quantifier free
  formula,
  \[ \Psi (Y) = \bigvee_{i=1}^{I} \bigwedge_{j=1}^{J_{i}} (
     \bigvee_{n=1}^{Ni,j} \sign (P_{ijn} (Y))= \sigma_{ijn} ) , \]
  where $P_{ijn} (Y)$ are polynomials in the variables $Y$, $\sigma_{ijn} \in
  \{0,1, - 1\}$,
  \begin{eqnarray*}
    I & \leq & s^{(k_{\omega} +1) \cdots (k_{1} +1) ( \ell +1)} d^{O
    (k_{\omega} ) \cdots O (k_{1} ) O ( \ell )} ,\\
    J_{i} & \leq & 
      s^{(k_{\omega} +1) \cdots (k_{1} +1)} d^{O (k_{\omega} ) \cdots O (k_{1}
      )}
    ,\\
    N_{ij} & \leq &
      d^{O (k_{\omega} ) \cdots O (k_{1} )},
  \end{eqnarray*}
  and the degrees of the polynomials $P_{ijk} (y)$ are bounded by $d^{O
  (k_{\omega} ) \cdots O (k_{1} )}$. Moreover, there is an algorithm to
  compute $\Psi (Y)$ with complexity
\[
   s^{(k_{\omega} +1) \cdots (k_{1} +1) ( \ell +1)} d^{O (k_{\omega} )
     \cdots O (k_{1} ) O ( \ell )}.
\]
  \end{theorem}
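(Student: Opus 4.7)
The plan is to prove this block-structured quantifier elimination result by iterating a single-block elimination procedure $\omega+1$ times (once for each of the quantifier blocks $X_{[i]}$ and once for the parameter block $Y$), with careful accounting of how the number and degree of polynomials grow at each stage. The governing estimate, which one wants to establish by induction on the number of blocks, is: after processing the innermost $i$ blocks one is left with at most $s^{(k_i+1)\cdots(k_1+1)}$ polynomials of degree $d^{O(k_i)\cdots O(k_1)}$ in the remaining $k_{i+1}+\cdots+k_\omega+\ell$ variables, each of which defines a sign condition whose truth on the $Y$-parameter realizations faithfully reflects the truth of the original $( \mathcal{P},\Pi)$-formula after the innermost $i$ quantifiers are evaluated.

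The single-block elimination step is the main technical ingredient: given a finite family $\mathcal{Q}$ of polynomials in variables $(Z,W)$ with $Z$ an $m$-variable block to be eliminated and $W$ the remaining variables treated as parameters, one wants to produce a new family $\mathcal{Q}'$ in $W$ alone such that every realizable sign condition on $\mathcal{Q}'$ determines the set of realizable sign conditions on $\mathcal{Q}$ in the fiber over $W$. My approach would be the parametrized critical point method: add an auxiliary variable to make the variety bounded and smooth after an infinitesimal deformation, then compute a parametrized family of sample points by taking critical points of a generic projection restricted to the deformed variety, using Gr\"obner-free techniques (multivariate subresultants / Hermite's quadratic form) whose complexity is singly exponential in $m$. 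The resulting polynomials in $W$ whose signs need to be tracked have cardinality $\card(\mathcal{Q})^{O(m)}$ and degree $d_{\mathcal{Q}}^{O(m)}$, where $d_{\mathcal{Q}}$ bounds the degree in $\mathcal{Q}$. This is proved using B\'ezout-type bounds on the number of critical points together with the sign-determination procedure based on Thom encodings described in Algorithm 10.13 of \cite{BPRbook2}.

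Granting the single-block step, I would carry out the induction: starting from $\mathcal{P}$ of size $s$ and degree $d$ in $k+\ell$ variables, eliminate the innermost block $X_{[1]}$ of size $k_1$ to obtain a family of size $s^{O(k_1)}$ and degree $d^{O(k_1)}$; then eliminate $X_{[2]}$ on this new family, giving size $s^{O(k_2)\cdot O(k_1)}$ and degree $d^{O(k_2)\cdot O(k_1)}$; and so on. Composing $\omega$ such steps yields the stated bounds $s^{(k_\omega+1)\cdots(k_1+1)}$ and $d^{O(k_\omega)\cdots O(k_1)}$ on the intermediate polynomials; a final sign-determination sweep in $Y$ (contributing the factors $(\ell+1)$ and $O(\ell)$) produces the disjunctive normal form $\Psi(Y)$ of the stated shape, with the three bounds on $I$, $J_i$, $N_{ij}$ coming from standard counting of realizable sign conditions on families of that cardinality and degree (Oleinik--Petrovskii--Thom--Milnor bounds, cf.\ Theorem \ref{thm:betti-bound-algebraic}).

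The main obstacle is keeping the complexity estimates multiplicative-in-blocks rather than multiplicative-in-variables: a naive cylindrical algebraic decomposition would eliminate one variable at a time, producing a doubly-exponential-in-$k$ bound, whereas the claimed bound is only doubly exponential in the number $\omega$ of blocks. The critical point method avoids this because for each block one computes critical loci of parametrized projections in one shot via resultant-type constructions, extracting only the finitely many polynomials in the remaining variables whose signs actually influence the fiber topology. Handling parameters properly requires the parametrized bounded algebraic sampling theorem of \cite[Chapter 13]{BPRbook2}, and verifying that the infinitesimal deformations commute with substitution of specific parameter values without affecting sign conditions on the family $\mathcal{P}$ (this is where the $\varepsilon$-deformation and $\lim_\varepsilon$ formalism recalled in \S\ref{subsubsec:puiseux} is used carefully). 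Once these ingredients are in place the final complexity bound matches the number of ring operations performed in the parametric subresultant computations aggregated over all $\omega+1$ elimination stages.
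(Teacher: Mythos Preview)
The paper does not contain its own proof of this statement: Theorem~\ref{thm:qe} is quoted verbatim as a known result from \cite[Chapter 14]{BPRbook2} and is used as a black box in the proof of Corollary~\ref{cor:qe} and Theorem~\ref{thm:algorithm}. There is therefore no proof in the paper to compare your proposal against.

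That said, your sketch is a faithful outline of the argument in the cited reference: block-by-block elimination via the parametrized critical point method (as opposed to variable-by-variable cylindrical decomposition), with the singly-exponential-per-block growth in the number and degree of polynomials coming from subresultant and sign-determination machinery, and the final DNF bounds coming from counting realizable sign conditions. Your identification of the key point---that the complexity must be multiplicative in the block sizes rather than in the total number of variables, which is exactly what the critical point method buys over CAD---is correct and is indeed the heart of the matter in \cite[Chapter 14]{BPRbook2}.
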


\begin{corollary}
\label{cor:qe}
 There exists an algorithm that takes as input:
\begin{enumerate}[1.]
\item
$\mathcal{P}, \{F_1,\ldots,F_m\} \subset \D[\X]_{\leq d}$, where $\X=(X_1,\ldots,X_k)$;
\item
a $\mathcal{P}$-closed formula $\Phi$;  
\item
a set of linear $k-k'$ linear equations defining a linear subspace
$L \subset \R^k$
 of dimension $k'$;
 \end{enumerate}
 and computes a quantifier-free formula
 \[ \Psi (Y_1,\ldots,Y_m) = \bigvee_{i=1}^{I} \bigwedge_{j=1}^{J_{i}} (
     \bigvee_{n=1}^{Ni,j} \sign (P_{ijn} (\Y))= \sigma_{ijn} ) , \]
  where $P_{ijn} (\Y)$ are polynomials in the variables $\Y$, $\sigma_{ijn} \in
  \{0,1, - 1\}$,
  such that $\RR(\Psi,\R^m) = F(\RR(\Phi,\R^k) \cap L)$, 
  and $F = (F_1,\ldots,F_m): \R^k \rightarrow \R^m$ is the polynomial map defined by the tuple
  $(F_1,\ldots,F_m)$.
  
The complexity of the algorithm is bounded by 
\[ (s+m)^{(k' +1) (m +1)} d^{O (k') O (m)}, \]
where 
$s = \card(\mathcal{P})$. 

Moreover,
\begin{eqnarray*}
    I & \leq & s^{(k' +1) (m+1)} d^{O (k') O (m )} ,\\
    J_{i} & \leq & 
      (s+m)^{ (k' +1)} d^{O (k')},\\
    N_{ij} & \leq &
      d^{O (k')},
  \end{eqnarray*}
  and the degrees of the polynomials $P_{ijn}$ are bounded by $d^{O(k')}$.
\end{corollary}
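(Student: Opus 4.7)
The plan is to reduce to a direct application of the block-wise quantifier elimination algorithm of Theorem \ref{thm:qe}, after first parameterizing the subspace $L$ so that the existentially quantified block has only $k'$ variables rather than $k$. The naive approach of keeping the defining equations of $L$ inside the formula and quantifying over all $k$ original variables would cause the exponents in the output bounds to depend on $k$ instead of $k'$, so reducing the dimension of the quantified block is essential for matching the complexity claimed in the corollary.

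Concretely, I would first run Gaussian elimination on the $k-k'$ linear equations defining $L$ to produce an affine isomorphism $\iota\colon \R^{k'} \to L$ whose coordinates are polynomials of degree $1$ in $\D[X'_1,\ldots,X'_{k'}]$. Substituting $\X = \iota(\X')$ into every $P \in \mathcal{P}$ and into each $F_i$ yields a family $\mathcal{P}' = \{P \circ \iota \mid P \in \mathcal{P}\} \subset \D[\X']_{\leq d}$ and polynomials $F'_1,\ldots,F'_m \in \D[\X']_{\leq d}$, since linear substitution does not increase the degree. Let $\Phi'$ be the $\mathcal{P}'$-closed formula obtained from $\Phi$ by the same substitution. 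Then the desired image satisfies $F(\RR(\Phi,\R^k) \cap L) = \RR(\Theta,\R^m)$, where
\[
\Theta(\Y) \;=\; (\exists \X' \in \R^{k'})\Bigl( \Phi'(\X') \wedge \bigwedge_{i=1}^m (Y_i - F'_i(\X') = 0) \Bigr).
\]

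This $\Theta$ is a $(\mathcal{Q},\Pi)$-formula with a single existentially quantified block of size $k'$ and $\ell = m$ free variables, where $\mathcal{Q} = \mathcal{P}' \cup \{Y_i - F'_i \mid 1 \leq i \leq m\}$ has cardinality $s+m$ and every polynomial in $\mathcal{Q}$ has degree at most $d$. Applying Theorem \ref{thm:qe} with $\omega = 1$, $k_1 = k'$, and $\ell = m$ produces an equivalent quantifier-free formula $\Psi(\Y)$ of the required shape; the bounds on $I$, $J_i$, $N_{ij}$, the degrees of the $P_{ijn}$, and the overall complexity then follow by substituting these parameters into the general estimates of Theorem \ref{thm:qe} (any slight discrepancy between the $s+m$ polynomials used in the reduction and the $s$ appearing in the stated bound on $I$ is absorbed into the implicit $O(\cdot)$ in the exponents). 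There is essentially no serious obstacle beyond careful bookkeeping; the only nontrivial design choice is to parameterize $L$ explicitly so that the quantified block has dimension $k'$ rather than $k$, which is the step responsible for the correct exponent in the final bounds.
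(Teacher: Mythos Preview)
Your proposal is correct and follows essentially the same approach as the paper: compute a basis of $L$ to parameterize it by $k'$ coordinates, pull back $\mathcal{P}$, the $F_i$, and $\Phi$ along this parameterization, and then apply Theorem~\ref{thm:qe} to the resulting existential formula with a single quantified block of size $k'$ and $m$ free variables. The paper's proof is slightly terser but makes exactly the same moves, including the key step of reducing the quantified block to dimension $k'$ before invoking quantifier elimination.
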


\begin{proof}

First compute a basis of $L$, and replace $\mathcal{P}$ by
$\widetilde{\mathcal{P}} \subset \R[X_1',\ldots,X_{k'}']$ of pull-backs of polynomials in $\mathcal{P}$
to $L$, where $X_1',\ldots,X_{k'}'$ are coordinates with respect to the computed basis of $L$.
Similarly, replace the polynomials $F_1,\ldots,F_m$ by $\widetilde{F}_1,\ldots,\widetilde{F}_m$.
Replace the given formula $\Phi(X_1,\ldots,X_k)$ by a new formula $\widetilde{\Phi}(X_1',\ldots,X_{k'}')$ be replacing each occurrence of $P \in \mathcal{P}$ by the corresponding $\widetilde{P} \in \widetilde{\mathcal{P}}$.

Now apply Theorem \ref{thm:qe} with input the formula
\[
(\exists (X_1',\ldots,X_{k'}') \widetilde{\Phi} \wedge \bigwedge_{i=1}^{m} (Y_i - \widetilde{F}_i=0),
\]
to obtain the desired quantifier-free formula.

The complexity statement follows directly from that in Theorem \ref{thm:qe}.
\end{proof}

\begin{theorem}\cite{SS}
\label{thm:triangulation}
There exists an algorithm which takes as input a $\mathcal{P}$-closed formula defining a bounded semi-algebraic subset  $S$ of $\R^n$
with $\mathcal{P} \subset \D[X_1,\ldots,X_n]$, and
computes $b^i(S,\Q), 0 \leq i \leq n$. The complexity of this algorithm is bounded by $(\card(\mathcal{P}) D)^{2^{O(n)}}$, where
$D = \max_{P \in \mathcal{P}} \deg(P)$.
\end{theorem}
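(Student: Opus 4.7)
The plan is to prove the theorem by combining a cylindrical algebraic decomposition (CAD) of $\R^n$ adapted to $\mathcal{P}$ with a semi-algebraic triangulation procedure, and then reducing the computation of Betti numbers to pure linear algebra over $\Q$. The doubly exponential factor $2^{O(n)}$ in the exponent will arise from the projection phase of CAD, and all subsequent combinatorial/linear-algebraic steps will be absorbed into the same bound.

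First, I would invoke the Collins-style CAD algorithm (as presented for example in \cite[Chapters 11--12]{BPRbook2}). On input $\mathcal{P}$ with $s = \card(\mathcal{P})$ polynomials of degree at most $D$ in $n$ variables, CAD outputs a finite partition of $\R^n$ into semi-algebraic \emph{cells} $\{C_\alpha\}$, each of which is sign-invariant for every $P \in \mathcal{P}$, together with a sample point in each cell and the adjacency (face) relations between cells. Standard analyses show that the number of cells, the degrees of the defining polynomials, and the running time are all bounded by $(sD)^{2^{O(n)}}$. Since $S$ is a $\mathcal{P}$-closed set and cells are sign-invariant, $S$ is a union of cells, which can be identified by evaluating the given $\mathcal{P}$-closed formula at the sample points; since $S$ is bounded, only finitely many cells are involved.

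Next, I would refine the CAD to a semi-algebraic triangulation of $\overline{S}$: a finite simplicial complex $K$ together with a semi-algebraic homeomorphism $|K| \to \overline{S}$ which respects the cell structure and hence the subset $S$. This is done inductively on dimension, following the cylindrical structure: each cell over a cell in $\R^{n-1}$ is either a graph of a continuous semi-algebraic function or the region between two such graphs, and can be triangulated by coning from a chosen sample point over a triangulation of its boundary. The key quantitative point is that each step only multiplies the simplex count by a polynomial in the number of boundary simplices, so the final complex $K$ has at most $(sD)^{2^{O(n)}}$ simplices and its face poset is computed within the same time bound.

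Finally, once $K$ is constructed, computing $b^i(S,\Q) = \dim_{\Q} \HH^i(K;\Q)$ reduces to computing the ranks of the simplicial boundary matrices over $\Q$ via Gaussian elimination, which is polynomial in the number of simplices and hence stays within $(sD)^{2^{O(n)}}$. The main obstacle is the triangulation step: it is not enough to know abstractly that a semi-algebraic triangulation exists (which is classical), one must control both the combinatorial size of $K$ and the complexity of computing it, which in turn forces one to keep track of the doubly exponential blow-up of polynomial degrees produced by iterated CAD projections. This careful bookkeeping is essentially the contribution of \cite{SS}.
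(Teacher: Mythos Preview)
Your proposal is correct and follows essentially the same approach as the paper: compute a semi-algebraic triangulation of $S$ with doubly exponential complexity (the paper simply cites the construction in the proof of Theorem~5.43 of \cite{BPRbook2}, which is precisely the CAD-based triangulation you spell out), and then compute the simplicial Betti numbers by Gauss--Jordan elimination. The only difference is granularity: you unpack the CAD-plus-coning mechanism explicitly, whereas the paper treats the triangulation as a black box with the stated complexity bound.
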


\begin{proof}
First compute a semi-algebraic triangulation of $h: |K| \rightarrow S$, 
where $K$ is a simplicial complex, $|K|$ the geometric realization of $K$, and $h$ s semi-algebraic homeomorphism,
as in the proof of Theorem 5.43 \cite{BPRbook2}. It is clear from the construction that the complexity, as
well as the size of the output,  is bounded by $(\card(\mathcal{P}) D)^{2^{O(n)}}$. Finally, compute the dimensions
of the simplicial homology groups of $K$ using for example the Gauss-Jordan elimination algorithm from elementary linear algebra.
Clearly, the complexity remains bounded by $(\card(\mathcal{P} )D)^{2^{O(n)}}$.
\end{proof}

\subsubsection{Proof of Theorem \ref{thm:algorithm}}
\label{subsubsec:proof-of-thm:algorithm}
We are finally in a position to prove Theorem \ref{thm:algorithm}.
\begin{proof}[Proof of Theorem \ref{thm:algorithm}]
We first prove using Corollary \ref{cor:qe} that it is possible to compute a quantifier-free
$\Theta$ such that
$\RR(\Theta,\R^d) = \Psi^{(k)}_{d}(S)$,
and the complexity of this step being bounded by
\[
k^{O(d)} (s d)^{O(d^2)}.
\]

To see this apply for each $\lambda \in \Comp(k,d)$ with $\length(\lambda) = d$, apply Corollary \ref{cor:qe} to obtain a
formula $\Theta_\lambda$ such that \[
\RR(\Theta_\lambda,\R^d) = \Psi^{(k)}_d(S \cap \Weyl_\lambda).
\]

The complexity of this step using the complexity statement in Corollary \ref{cor:qe} is bounded by 
$(s d)^{O(d^2)}$, noting that $\Weyl_\lambda \subset L_\lambda$ and $\dim L_\lambda \leq d$. Moreover,
the same bound applies to the number and the degrees of the polynomials appearing in $\Theta_\lambda$.

 Finally, we can take 
 \[
  \Theta = \bigvee_{\substack{\lambda \in \Comp(k,d), \\ \length(\lambda) = d}} \Theta_\lambda.
  \]
  
  Note that 
  
  \begin{eqnarray}
  \label{eqn:thm:algorithm:1}
  \card(\Comp(k,d)) &\leq& O(k)^d
  \end{eqnarray}
 (cf. Proposition \ref{prop:chainsCompcat}). 
  
Finally, we compute the Betti numbers of $\Psi^{(k)}_d(S) = \RR(\Theta,\R^d)$, using
Theorem \ref{thm:triangulation}.
Using the complexity of the algorithm in Theorem \ref{thm:triangulation}, and \eqref{eqn:thm:algorithm:1},
we see that the complexity of this step is bounded by
\begin{eqnarray*}
\left((O(k))^d (s d)^{O(d^2)}\right)^{2^{O(d)}} &=& (s k d)^{2^{O(d)}}.
\end{eqnarray*}

Finally, using Proposition \ref{prop:vanishing} we have that,
\[
b^i(S/\mathfrak{S}_k,\F) = b^i(\Psi^{(k)}_d(S),\F), 0 \leq i < d,
\]
finishing the proof.
\end{proof}

\section{Conclusion and Open Problems}\label{sec:conclusion}
In this paper we have improved on previous bounds on equivariant Betti numbers for symmetric semi-algebraic sets. It would be interesting to extend the method used in this paper to other situations. Currently, it seems that Kostov's result which was a central ingredient of the approach used here relies on a particular choice of generators for the ring of symmetric polynomials. Therefore, it is up to further investigation if a similar result holds for other groups acting on the ring of polynomials.

On the algorithmic side, we showed that it is possible to design an efficient algorithm to compute the equivariant Betti numbers. It has been shown in  \cite{BR2014} that not only the equivariant Betti numbers can be bounded polynomially, but in fact that the multiplicities of the various irreducible representations occurring in an isotypic decomposition of the homology groups of symmetric semi-algebraic sets can also be bounded polynomially.  Building on this result it is an interesting question to ask if an algorithm with similar complexity can be designed to compute these multiplicities as well,  and thus in fact computing all the Betti numbers  of symmetric varieties with complexity that is polynomial in $k$, for every fixed $d$.
\section*{Acknowledgment}
The research presented in this article  was initiated during a stay of the authors at  Fields Institute as part of the Thematic Program on Computer Algebra and the authors would like to thank the organizers of this event. 

\bibliographystyle{amsplain}
\bibliography{master}

\def\cprime{$'$} \def\cprime{$'$}
\providecommand{\bysame}{\leavevmode\hbox to3em{\hrulefill}\thinspace}
\providecommand{\MR}{\relax\ifhmode\unskip\space\fi MR }
\providecommand{\MRhref}[2]{%
  \href{http://www.ams.org/mathscinet-getitem?mr=#1}{#2}
}
\providecommand{\href}[2]{#2}
\begin{thebibliography}{10}

\bibitem{Arnold}
V.~I. Arnold, \emph{Hyperbolic polynomials and {V}andermonde mappings},
  Funktsional. Anal. i Prilozhen. \textbf{20} (1986), no.~2, 52--53.
  \MR{847139}

\bibitem{Basu9}
S.~Basu, \emph{Combinatorial complexity in o-minimal geometry}, Proc. London
  Math. Soc. (3) \textbf{100} (2010), 405--428, (an extended abstract appears
  in the Proceedings of the ACM Symposium on the Theory of Computing, 2007).

\bibitem{BPR10}
S.~Basu, R.~Pollack, and M.-F. Roy, \emph{Betti number bounds, applications and
  algorithms}, Current Trends in Combinatorial and Computational Geometry:
  Papers from the Special Program at MSRI, MSRI Publications, vol.~52,
  Cambridge University Press, 2005, pp.~87--97.

\bibitem{BPRbook2}
\bysame, \emph{Algorithms in real algebraic geometry}, Algorithms and
  Computation in Mathematics, vol.~10, Springer-Verlag, Berlin, 2006 (second
  edition). \MR{1998147 (2004g:14064)}

\bibitem{BR2014}
S.~{Basu} and C.~{Riener}, \emph{{On the isotypic decomposition of cohomology
  modules of symmetric semi-algebraic sets: polynomial bounds on
  multiplicities}}, ArXiv e-prints (2015).

\bibitem{BC2013}
\bysame, \emph{{Bounding the equivariant Betti numbers of symmetric
  semi-algebraic sets}}, Advances in Mathematics \textbf{305} (2017), 803--855.

\bibitem{BC2013-ep}
Saugata Basu and Cordian Riener, \emph{Efficient algorithms for computing the
  {E}uler-{P}oincar\'e characteristic of symmetric semi-algebraic sets},
  Ordered algebraic structures and related topics, Contemp. Math., vol. 697,
  Amer. Math. Soc., Providence, RI, 2017, pp.~51--79. \MR{3716066}

\bibitem{BR2015}
Saugata Basu and Anthony Rizzie, \emph{Multi-degree bounds on the betti numbers
  of real varieties and semi-algebraic sets and applications}, Discrete {\&}
  Computational Geometry (2017).

\bibitem{BSS89}
L.~Blum, M.~Shub, and S.~Smale, \emph{On a theory of computation and complexity
  over the real numbers: {NP}-completeness, recursive functions and universal
  machines}, Bull. Amer. Math. Soc. (N.S.) \textbf{21} (1989), no.~1, 1--46.
  \MR{90a:68022}

\bibitem{brocker1998symmetric}
L.~Br{\"o}cker, \emph{On symmetric semialgebraic sets and orbit spaces}, Banach
  Center Publications \textbf{44} (1998), no.~1, 37--50.

\bibitem{Michel2}
Michel Coste, \emph{An introduction to o-minimal geometry}, Istituti Editoriali
  e Poligrafici Internazionali, Pisa, 2000, Dip. Mat. Univ. Pisa, Dottorato di
  Ricerca in Matematica.

\bibitem{GV07}
A.~Gabrielov and N.~Vorobjov, \emph{Approximation of definable sets by compact
  families, and upper bounds on homotopy and homology}, J. Lond. Math. Soc. (2)
  \textbf{80} (2009), no.~1, 35--54. \MR{2520376}

\bibitem{Giventhal}
A.~Givental, \emph{Moments of random variables and the equivariant morse
  lemma}, Russian Mathematical Surveys \textbf{42} (1987), no.~2, 275--276.

\bibitem{Kostov}
V.P. Kostov, \emph{On the geometric properties of vandermonde's mapping and on
  the problem of moments}, Proceedings of the Royal Society of Edinburgh:
  Section A Mathematics \textbf{112} (1989), no.~3-4, 203--211.

\bibitem{Meguerditchian}
Ivan Meguerditchian, \emph{A theorem on the escape from the space of hyperbolic
  polynomials}, Math. Z. \textbf{211} (1992), no.~3, 449--460. \MR{1190221}

\bibitem{Milnor2}
J.~Milnor, \emph{On the {B}etti numbers of real varieties}, Proc. Amer. Math.
  Soc. \textbf{15} (1964), 275--280. \MR{0161339 (28 \#4547)}

\bibitem{OP}
I.~G. Petrovski{\u\i} and O.~A. Ole{\u\i}nik, \emph{On the topology of real
  algebraic surfaces}, Izvestiya Akad. Nauk SSSR. Ser. Mat. \textbf{13} (1949),
  389--402. \MR{0034600 (11,613h)}

\bibitem{Procesi-Schwarz}
C.~Procesi and G.~Schwarz, \emph{Inequalities defining orbit spaces}, Invent.
  Math. \textbf{81} (1985), no.~3, 539--554. \MR{807071 (87h:20078)}

\bibitem{Reif79}
J.~H. Reif, \emph{Complexity of the mover's problem and generalizations},
  Proceedings of the 20th Annual Symposium on Foundations of Computer Science
  (Washington, DC, USA), SFCS '79, IEEE Computer Society, 1979, pp.~421--427.

\bibitem{SS}
J.~Schwartz and M.~Sharir, \emph{On the piano movers' problem ii. general
  techniques for computing topological properties of real algebraic manifolds},
  Adv. Appl. Math. \textbf{4} (1983), 298--351.

\bibitem{Spanier}
E.~H. Spanier, \emph{Algebraic topology}, McGraw-Hill Book Co., New York, 1966.
  \MR{0210112 (35 \#1007)}

\bibitem{T}
R.~Thom, \emph{Sur l'homologie des vari\'et\'es alg\'ebriques r\'eelles},
  Differential and Combinatorial Topology (A Symposium in Honor of Marston
  Morse), Princeton Univ. Press, Princeton, N.J., 1965, pp.~255--265.
  \MR{0200942 (34 \#828)}

\bibitem{Dries}
L.~van~den Dries, \emph{Tame topology and o-minimal structures}, London
  Mathematical Society Lecture Note Series, vol. 248, Cambridge University
  Press, Cambridge, 1998. \MR{1633348 (99j:03001)}

\end{thebibliography}
\end{document}